\documentclass[12pt,a4paper]{amsart}

\usepackage{graphics,epic}
\usepackage{amsmath,amssymb, amsthm}
\usepackage[all,2cell]{xy}
\usepackage{mathrsfs}
\usepackage{color}

\textwidth15.1cm \textheight22.7cm \headheight12pt
\oddsidemargin.4cm \evensidemargin.4cm \topmargin0cm
\newtheorem{theorem}{Theorem}[section]
\newtheorem*{theorem*}{Theorem}

\newtheorem{lemma}[theorem]{Lemma}
\newtheorem{proposition}[theorem]{Proposition}

\newtheorem*{conjecture*}{Conjecture}

\newtheorem{remark}[theorem]{Remark}

\newcommand{\opname}[1]{\operatorname{\mathsf{#1}}}

\renewcommand{\mod}{\opname{mod}\nolimits}

\newcommand{\add}{\opname{add}\nolimits}

\newcommand{\dimv}{\underline{\dim}\,}

\newcommand{\ind}{\opname{ind}}

\newcommand{\rep}{\opname{rep}\nolimits}

\newcommand{\Z}{\mathbb{Z}}
\newcommand{\N}{\mathbb{N}}
\newcommand{\Q}{\mathbb{Q}}

\newcommand{\id}{\mathbf{1}}

%
%
\newcommand{\Hom}{\opname{Hom}}
\newcommand{\go}{\opname{G_0}}

\newcommand{\Ext}{\opname{Ext}}

\newcommand{\Aut}{\opname{Aut}}

%
%



\setcounter{page}{1}

\begin{document}

\title[Feigin's map revisited]{Feigin's map revisited}

\author{Changjian Fu}
\address{Department of Mathematics\\
SiChuan University\\
610064 Chengdu\\
P.R.China
}
\email{
\begin{minipage}[t]{5cm}
changjianfu@scu.edu.cn
\end{minipage}}

\subjclass[2010]{16G20, 17B37}
\keywords{Feigin's map, quantum enveloping algebra,  Ringel-Hall algebra,  quantum shuffle algebra, monomial basis}

\begin{abstract}
The aim of this note is to understand the injectivity of Feigin's map $\mathbf{F_w}$ by representation theory of quivers, where $\mathbf{w}$ is the word of a reduced expression of the longest element of a finite Weyl group. This is achieved by the Ringel-Hall algebra approach and a careful studying of a well-known total order on the category of finite-dimensional representations of a valued quiver of finite type.
 As a byproduct, we also generalize Reineke's construction of monomial bases to non-simply-laced cases.
\end{abstract}
\maketitle

\section{Introduction}
Let $U_+$ be the positive part of a quantum enveloping algebra associated to a Cartan matrix $C\in M_{I\times I}(\Z)$ and ${\bf w}=(i_1i_2\cdots i_m)\in I^m$ a word in alphabet $I$. In 1990s', B. Feigin proposed a homomorphism $\mathbf{F_w}$ from $U_+$ to the quantum polynomial algebra $\mathbb{P}_{\bf w}$ as a tool for studying the skew-field of fractions of $U_+$. He also conjectured that $\mathbf{F_w}$ is an embedding provided that $C$ is of finite type and ${\bf w}$ is a word associated to a reduced expression of the longest element in the Weyl group of $C$. This conjecture has been confirmed by K. Iohara and F. Malikov in a special case~\cite{IM} and by A. Joseph in general~\cite{Joseph}. Moreover, $\mathbf{F_w}$ induces an isomorphism of skew-fields of fractions on both sides in this case (see also ~\cite{Berenstein}).
In particular, this embedding gives us a realization of $U_+$ as subalgebra of $\mathbb{P}_{\bf w}$ which leads to a construction of (dual) monomial bases for the quantum enveloping algebras of finite simply-laced type in~\cite{Reineke}.

There are two other well-known realizations of $U_+$ which are independent of the choice of the type of $C$. The first one is an embedding of $U_+$ into the  quantum shuffle algebra~\cite{Green97, Rosso}, which is dual to the realization of $U_+$ as a quotient of a free algebra. The second one is an embedding of $U_+$ into (dual) Ringel-Hall algebra, which was  discovered by Ringel~\cite{Ringel} and Green~\cite{Green}. Both  realizations provided us a better understanding of certain  bases of $U_+$~({\it cf.}~\cite{Leclerc, Ringel, Rosso} for instance). Recently, the Feigin's map $\mathbf{F_w}$ has been extended to  quantum shuffle algebras in~\cite{Rupel} and (dual) Ringel-Hall algebras in~\cite{BR}. Moreover, these Feigin-type maps fit into a commuting tetrahedron of maps beginning with the quantum enveloping algebra and terminating in a quantum polynomial algebra~\cite{Rupel}.

Let $C$ be a Cartan matrix of finite type and $(Q, {\bf d})$ a valued quiver associated to $C$. Let $\mathcal{H}^*(Q)$ be the dual Ringel-Hall algebra of $(Q, {\bf d})$. In this case, the embedding of $U_+$ into $\mathcal{H}^*(Q)$ is an isomorphism.
In particular, $U_+$ admits a PBW basis parametrized by the isoclasses of representations of $(Q, {\bf d})$. Let $\omega_0$ be the longest element of the Weyl group associated to $C$.
 It is well-known that the reduced expressions of $\omega_0$ also have close connection with the representation theory of $(Q, {\bf d})$ (see Section~\ref{s:Weylgroup} or~\cite{Lusztig90} for instance). Our main purpose  is to pursue a  representation-theoretic understanding of the injectivity of Feigin's map $\mathbf{F_w}$, where ${\bf w}$ is a word associated to a reduced expression of $\omega_0$. This is achieved  by a careful studying  of a well-known total order on the category of finite-dimensional representations of a  valued quiver of finite type. In fact, we obtain the injectivity of $\mathbf{F_w}$ for a large class of words ${\bf w}$ which may not be the words of reduced expressions of the longest element $\omega_0$~({\it cf.} Theorem~\ref{t:injective-directed-partition}). Moreover, we find that the total order consider in present paper can be used to replace the degeneration order $\leq_{\deg}$ in~\cite{Reineke}. Thus we  generalize the construction of monomial bases in~\cite{Reineke} to all the Cartan matrices of finite type.

The paper is structured as follows. In Section~\ref{s:preliminary},
we recall definitions and  basic properties concerning graded duals of graded (co)algebras, quantum enveloping algebras, quantum shuffle algebras and quantum polynomial algebras.
Section~\ref{s:Feigin-typemaps} is devoted to give a unified treatment of the Feigin-type maps by using the graded dual approach, which simplifies a lot of calculations in~\cite{Rupel}. The results will be employed to give a representation-theoretic understanding of the injectivity of the Feigin's map in Section~\ref{s:Feiginmap}. After introducing the total order associated to an enumeration in Section~\ref{s:order}, we prove a fundamental result (Proposition~\ref{p:order}) on the behavior of a representation with respect to the total order in Section~\ref{s:directedpartition}.  We then deduce the injectivity  of Feigin's map (Theorem~\ref{t:injective-directed-partition} and ~\ref{t:injective}) from Proposition~\ref{p:order}. We end up with two applications of the total order in Section~\ref{s:consequence}. The first one is to generalize the construction of ~\cite{Reineke} to all the Cartan matrices of finite type. In particular, we obtain  monomial bases~(Theorem~\ref{t:monomialbase}) for quantum enveloping algebras of finite type following ~\cite{Reineke}. The second one~(Proposition~\ref{p:characterization}) is a  characterization of modules for representation-finite hereditary algebras over finite field.

\noindent{\bf Acknowledgements.} 
The author would like to thank the referee for his/her valuable comments and suggestions in making this article more readable.
This work was partially support by NSF of China (No. 11471224).

\section{Preliminary}~\label{s:preliminary}
\subsection{Quantum binomial coefficients}~\label{ss:quantum-binomial}
Let $v$ be an indeterminate. The quantum numbers, factorials and binomials in variable $v$ are defined as follows
\[[n]:=v^{-n+1}+v^{-n+3}+\cdots+v^{n-1}, [n]!:=[n][n-1]\cdots [2][1], \left[\begin{array}{c}n\\ k \end{array}\right]=\frac{[n]!}{[k]![n-k]!}.
\]
The following result is known as Pascal identities.
\begin{lemma}~\label{l:pascal-identities}
\[\left[\begin{array}{c}n\\ k\end{array}\right]=v^{n-k}\left[\begin{array}{c}n-1\\ k-1\end{array}\right]+v^{-k}\left[\begin{array}{c}n-1\\ k\end{array}\right].
\]
\end{lemma}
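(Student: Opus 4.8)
The plan is to reduce everything to the closed form $[m] = (v^{m} - v^{-m})/(v - v^{-1})$, which is immediate from the defining sum $[m] = v^{-m+1} + v^{-m+3} + \cdots + v^{m-1}$ (a geometric series with $m$ terms and ratio $v^{2}$). With this closed form in hand, the identity becomes a matter of putting two fractions over a common denominator and simplifying a short telescoping expression.

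First I would start from the right-hand side and rewrite it using the definition of the quantum binomials,
\[
v^{n-k}\left[\begin{array}{c}n-1\\ k-1\end{array}\right]+v^{-k}\left[\begin{array}{c}n-1\\ k\end{array}\right]
= v^{n-k}\,\frac{[n-1]!}{[k-1]!\,[n-k]!} + v^{-k}\,\frac{[n-1]!}{[k]!\,[n-1-k]!}.
\]
Using $[k]! = [k]\,[k-1]!$ and $[n-k]! = [n-k]\,[n-1-k]!$ to factor out $[n-1]!/([k]!\,[n-k]!)$, this becomes
\[
\frac{[n-1]!}{[k]!\,[n-k]!}\Bigl(v^{n-k}[k] + v^{-k}[n-k]\Bigr).
\]
The one substantive step is then to verify that the bracketed factor equals $[n]$; substituting the closed form gives
\[
v^{n-k}[k] + v^{-k}[n-k] = \frac{v^{n-k}(v^{k} - v^{-k}) + v^{-k}(v^{n-k} - v^{-(n-k)})}{v - v^{-1}} = \frac{v^{n} - v^{-n}}{v - v^{-1}} = [n],
\]
where the two middle terms $\pm v^{n-2k}$ cancel. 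Multiplying back in, the right-hand side equals $[n]!/([k]!\,[n-k]!) = \left[\begin{array}{c}n\\ k\end{array}\right]$, which is the left-hand side.

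I do not expect any genuine obstacle here. The one point to watch is the boundary cases $k=0$ and $k=n$, where one of the binomials on the right degenerates and must be taken to be $0$ by convention (there is no $[-1]!$), and the factorial cancellation has to be read off directly; in each case the asserted identity just reduces to the trivial $\left[\begin{array}{c}n\\ 0\end{array}\right] = \left[\begin{array}{c}n\\ n\end{array}\right] = 1$. An alternative route would be induction on $n$ using the recursive structure of $[n]!$, but the direct computation above is shorter and more transparent.
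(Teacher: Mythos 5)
Your computation is correct: the closed form $[m]=(v^{m}-v^{-m})/(v-v^{-1})$, the factorization of the common factor $[n-1]!/([k]!\,[n-k]!)$, and the cancellation yielding $v^{n-k}[k]+v^{-k}[n-k]=[n]$ all check out, and your remark on the degenerate cases $k=0,n$ is the right caveat. The paper states this lemma as a well-known fact and gives no proof at all, so there is nothing to compare against; your argument is a perfectly standard and complete verification.
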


\subsection{Recollection on bialgebras and their graded duals}
We refer to~\cite{Kassel}~for basic definitions and properties of coalgebras and bialgebras.
Let $K$ be a field and $I$ a finite set. For a finite-dimensional $K$-vector space $V$, denote by $V^*=\Hom_K(V,K)$ the dual space of $V$ and $\langle-,-\rangle_V: V^*\times V\to K$ the evaluation pairing.   The evaluation pairing $\langle-,-\rangle_V$ induces a pairing
$\langle-,-\rangle_{V\otimes V}$ between $V^*\otimes V^*$ and $V\otimes V$ by setting
\[\langle x\otimes y, a\otimes b\rangle_{V\otimes V}=\langle x,a\rangle_V\langle y,b\rangle_V,
\]
where $x, y\in V^*$ and $a,b\in V$.

Let $(A, \mu, \eta)$ be a $K$-algebra with multiplication $\mu: A\otimes A\to A$ and unit $\eta:K\to A$. The algebra $(A, \mu, \eta)$  is an $\N I$-graded algebra if there is a gradation on $A=\bigoplus\limits_{\alpha\in \N I}A_\alpha$ such that $\eta(K)\subseteq A_0$ and $\mu(A_\alpha\otimes A_\beta)\subseteq A_{\alpha+\beta}$ for each $\alpha,\beta\in \N I$.

Let $(A=\bigoplus\limits_{\alpha\in \N I}A_\alpha, \mu, \eta)$ be an $\N I$-graded $K$-algebra with $\dim_KA_\alpha<\infty$ for each $\alpha\in \N I$. Let $A^*=\bigoplus\limits_{\alpha\in \N I}A_\alpha^*$ be the graded dual space of $A$ and $\langle-, -\rangle_A: A^*\times A\to K$ the associated evaluation pairing.
Note that we have $\langle A^*_{\alpha}, A_\beta\rangle_A=\{0\}$ for $\alpha\neq \beta\in \mathbb{N}I$.
 Define the $K$-linear map $\mu^*:A^*\to A^*\otimes A^*$ by \[\langle \mu^*(c), a\otimes b\rangle_{A\otimes A}=\langle c,\mu(a\otimes b)\rangle_A \] for homogeneous elements $a,b\in A$ and $c\in A^*$.  We call $\mu^*$ the {\it adjoint of $\mu$} with respect to the evaluation pairing $\langle-,-\rangle_A$. Similarly, one defines $\eta^*$ to be the adjoint of $\eta$.  It is well-known that $(A^*, \mu^*, \eta^*)$ is an $\N I$-graded coalgebra over $K$. Namely, $(A^*, \mu^*, \eta^*)$ is a coalgebra such that
\[\mu^*(A_\alpha^*)\subseteq \bigoplus_{\alpha_1+\alpha_2=\alpha}A_{\alpha_1}^*\otimes A_{\alpha_2}^*\] and $\eta^*(A_\alpha^*)=0$ for $\alpha\neq 0$.

Dually, let $(A=\bigoplus\limits_{\alpha\in \N I}A_\alpha, \Delta, \epsilon)$ be an $\N I$-graded coalgebra with comultiplication $\Delta: A\to A\otimes A$ and counit $\epsilon: A\to K$. Assume that $\dim_KA_\alpha<\infty$ for each $\alpha\in \mathbb{N}I$. One can also consider its graded dual space $A^*$ and the adjoints $\Delta^*, \epsilon^*$ of $\Delta$ and $\epsilon$ respectively. Then $(A^*, \Delta^*, \epsilon^*)$ is an $\N I$-graded $K$-algebra.
In particular, if $(A=\bigoplus\limits_{\alpha\in \N I}A_\alpha, \mu, \eta, \Delta, \epsilon)$ is an $\N I$-graded bialgebra  over $K$ with finite-dimensional components, then its graded dual $(A^*=\bigoplus\limits_{\alpha\in \N I}A_\alpha^*, \Delta^*, \epsilon^*, \mu^*,\eta^*)$ is an $\N I$-graded bialgebra over $K$.

The $\N I$-graded algebras, coalgebras and bialgebras considered in the following are assumed to have finite-dimensional components.

Let $A=\bigoplus\limits_{\alpha\in \N I}A_\alpha$ and $B=\bigoplus\limits_{\alpha\in \N I}B_\alpha$ be $\N I$-graded $K$-algebras.
A $K$-linear map $f: A\to B$ is a homomorphism  of $\N I$-graded $K$-algebras if $f$ is a homomorphism of $K$-algebras and $f(A_\alpha)\subseteq B_\alpha$ for each $\alpha\in \N I$.  For a given homomorphism $f: A\to B$ of $\N I$-graded algebras, we consider the linear map $f^*: B^*\to A^*$ from the graded dual $B^*$ to $A^*$ defined by
\[\langle f^*(b^*), a\rangle_A=\langle b^*, f(a)\rangle_B, \forall~ b^*\in B_\alpha^*, a\in A_\alpha.
\]
We call $f^*$ the {\it adjoint} of $f$ with respect to the evaluation pairings $\langle-, -\rangle_A$ and $\langle-,-\rangle_B$.  It is easy to check that $f^*: B^*\to A^*$ is a homomorphism of $\N I$-graded coalgebras. That is, $f^*$ is a homomorphism of colagebras and $f^*(B_\alpha^*)\subseteq A_\alpha^*$ for each $\alpha\in \N I$.

Dually, if $f: A=\bigoplus\limits_{\alpha\in \N I}A_\alpha\to B=\bigoplus\limits_{\alpha\in \N I}B_\alpha$ is a homomorphism of $\N I$-graded coalgebras, then $f^*:B^*\to A^*$ is a homomorphism of $\N I$-graded algebras. In particular, if $f: A\to B$ is a homomorphism of $\N I$-graded bialgebras, then $f^*:B^*\to A^*$ is a homomorphism of $\N I$-graded bialgebras.  The following fact is well-known ({\it cf.} Proposition 2.5 in~\cite{Green97}).
\begin{proposition}~\label{p:non-degenerate-bilinear-form}
Let $(A=\bigoplus\limits_{\alpha\in \mathbb{N}I}A_\alpha,\mu,\eta,\Delta,\epsilon)$ be an $\N I$-graded bialgebra over $K$ with finite-dimensional components. Assume that there is a non-degenerate symmetric bilinear form $(-,-)_A:A\times A\to K$ such that 
\[(a\otimes b, \Delta(c))_{A\otimes A}=(\mu(a\otimes b), c)_A,~ (\eta(1), a)_A=\epsilon(a)\] 
for any $a,b,c\in A$ and 
$(A_\alpha,A_\beta)=0$ whenever $\alpha\neq \beta\in \mathbb{N}I$.
Then $(A,\mu,\eta,\Delta,\epsilon)$ is isomorphic to $(A^*, \Delta^*,\epsilon^*,\mu^*,\eta^*)$ as $\N I$-graded bialgebras.
\end{proposition}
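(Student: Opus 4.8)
The plan is to build the isomorphism directly out of the bilinear form. Since $(A_\alpha, A_\beta)_A = 0$ for $\alpha \neq \beta$, the form $(-,-)_A$ restricts to a bilinear form on each homogeneous component $A_\alpha$, and non-degeneracy of $(-,-)_A$ on all of $A$ forces each of these restrictions to be non-degenerate: if $(a,-)_A$ vanishes on $A_\alpha$ for some homogeneous $a\in A_\alpha$, then it vanishes on every $A_\beta$ as well, hence $a=0$. As $\dim_K A_\alpha < \infty$, the assignment $a \mapsto (a,-)_A|_{A_\alpha}$ is therefore a $K$-linear isomorphism $\phi_\alpha \colon A_\alpha \iso A_\alpha^*$; assembling these we obtain a $K$-linear isomorphism $\phi = \bigoplus_{\alpha} \phi_\alpha \colon A \to A^* = \bigoplus_\alpha A_\alpha^*$, homogeneous of degree $0$ by construction, characterised by $\langle \phi(a), b\rangle_A = (a,b)_A$ for homogeneous $a,b$. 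It then remains to check that $\phi$ intertwines the four structure maps, i.e.\ that $\phi \colon (A, \mu, \eta, \Delta, \epsilon) \to (A^*, \Delta^*, \epsilon^*, \mu^*, \eta^*)$ is a morphism of $\N I$-graded bialgebras; being bijective, it is then automatically an isomorphism.

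All four verifications will have the same shape: two elements of $A^*$ (resp.\ of $A^*\otimes A^*$, resp.\ of $K$) are shown to be equal by checking that they pair identically with every homogeneous element of $A$ (resp.\ of $A\otimes A$), after which one invokes non-degeneracy of the evaluation pairing $\langle-,-\rangle_A$ (resp.\ $\langle-,-\rangle_{A\otimes A}$) — legitimate because on homogeneous components these are perfect pairings of finite-dimensional spaces. One first records that $\phi\otimes\phi$ transports $(-,-)_{A\otimes A}$ to $\langle-,-\rangle_{A\otimes A}$, since $\langle \phi(a)\otimes\phi(b), x\otimes y\rangle_{A\otimes A} = (a,x)_A(b,y)_A = (a\otimes b, x\otimes y)_{A\otimes A}$. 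Then, for homogeneous $a,b,c$,
\begin{align*}
\langle \phi(\mu(a\otimes b)), c\rangle_A &= (\mu(a\otimes b), c)_A = (a\otimes b, \Delta(c))_{A\otimes A} \\
&= \langle \phi(a)\otimes\phi(b), \Delta(c)\rangle_{A\otimes A} = \langle \Delta^*(\phi(a)\otimes\phi(b)), c\rangle_A ,
\end{align*}
so $\phi\circ\mu = \Delta^*\circ(\phi\otimes\phi)$. Dually, using the compatibility and the symmetry of $(-,-)_A$, for homogeneous $a,b,c$,
\begin{align*}
\langle (\phi\otimes\phi)(\Delta(a)), b\otimes c\rangle_{A\otimes A} &= (\Delta(a), b\otimes c)_{A\otimes A} = (a, \mu(b\otimes c))_A \\
&= \langle \phi(a), \mu(b\otimes c)\rangle_A = \langle \mu^*(\phi(a)), b\otimes c\rangle_{A\otimes A} ,
\end{align*}
giving $(\phi\otimes\phi)\circ\Delta = \mu^*\circ\phi$. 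For units and counits, $\langle \phi(\eta(1)), c\rangle_A = (\eta(1), c)_A = \epsilon(c) = \langle \epsilon^*(1), c\rangle_A$ yields $\phi\circ\eta = \epsilon^*$, and $\eta^*(\phi(a)) = \langle \phi(a), \eta(1)\rangle_A = (a,\eta(1))_A = \epsilon(a)$ yields $\eta^*\circ\phi = \epsilon$. Hence $\phi$ is the desired isomorphism of $\N I$-graded bialgebras.

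The argument is essentially formal once set up carefully, and the only points demanding attention are bookkeeping ones: $A^*$ must be kept as the \emph{graded} (restricted) dual throughout, so that $\mu^*$ and $\Delta^*$ genuinely land in $A^*$ and in $A^*\otimes A^*$ rather than in some completion, and every appeal to non-degeneracy must be made componentwise, where the pairings are perfect pairings of finite-dimensional spaces — the same finiteness hypothesis that made $A^*$ an $\N I$-graded bialgebra in the first place. The one genuinely substantive input is the \emph{symmetry} of $(-,-)_A$: it is precisely what lets the single compatibility identity $(a\otimes b, \Delta(c))_{A\otimes A} = (\mu(a\otimes b), c)_A$ be read in both directions, so that $\phi$ comes out simultaneously an algebra map (via $\mu\leftrightarrow\Delta^*$) and a coalgebra map (via $\Delta\leftrightarrow\mu^*$). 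I do not anticipate any real obstacle; the proposition is a packaging, in the $\N I$-graded setting, of the standard principle that a perfect Hopf pairing of a bialgebra with itself identifies it with its graded dual.
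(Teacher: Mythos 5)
Your proposal is correct and follows essentially the same route as the paper: define $\phi(a)=(a,-)_A$, observe it is a degree-zero linear isomorphism onto the graded dual, and verify the four compatibilities by pairing against homogeneous elements, with the symmetry of the form supplying the coalgebra-map direction. The paper's proof is a terser version of exactly this argument (it carries out the $\mu$ versus $\Delta^*$ and unit versus counit checks and declares the rest routine), so your write-up simply fills in the details it omits.
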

\begin{proof}
Let $\psi$ be the linear map defined as follows
\begin{eqnarray*}
\psi: &A&\to A^*.\\
&a&\mapsto (a,-)_A
\end{eqnarray*}
It is clear that $\psi$ is an isomorphism of $K$-vector spaces. Let $\langle-,-\rangle_A:A^*\times A\to K$ be the evaluation pairing. For any $a,b \in A$, we clearly have $\langle \psi(a), b\rangle_A=(a,b)_A$. Now it is routine to check that $\psi$ is a homomorphism of bialgebras. For instance, for any $a,b,c\in A$,  we have
\begin{eqnarray*}\langle \Delta^*\circ \psi\otimes \psi(a,b), c\rangle_{A\otimes A}&=&\langle \psi\otimes \psi(a\otimes b), \Delta(c)\rangle_{A\otimes A}\\
&=&\langle \psi(a)\otimes \psi(b), \Delta(c)\rangle_{A\otimes A}\\
&=&(a\otimes b, \Delta(c))_{A\otimes A}.
\end{eqnarray*}
Consequently, 
\[\langle \psi\circ \mu(a\otimes b), c\rangle_A=(\mu(a\otimes b), c)_A=(a\otimes b, \Delta(c))_{A\otimes A}.
\]
On the other hand, we clearly have 
\[\langle \psi\circ \eta(1), a\rangle_A=(\eta(1), a)_A=\epsilon(a)=\langle\epsilon^*(1),a\rangle_A.
\]
In particular,  $\psi$ satisfies the following commutative diagrams
\[\xymatrix{A\otimes A\ar[d]^{\mu}\ar[rr]^{\psi\otimes \psi}&&A^*\otimes A^*\ar[d]^{\Delta^*}\\
A\ar[rr]^{\psi}&&A^*}~~\xymatrix{K\ar[d]^{\eta}\ar[dr]^{\epsilon^*}\\
A\ar[r]^{\psi}&A^*.}
\]
The other commutative diagrams can be verified similarly.
\end{proof}

\subsection{The free algebra $\mathcal{F}$ and quantum shuffle algebra} We follow~\cite{Lusztig93}.
Let $C=(c_{ij})\in M_{I\times I}(\Z)$ be a symmetrizable Cartan matrix with symmetrizer ${\bf f}=\operatorname{diag}\{f_i, ~i\in I\}$.
Let $L$ be the root lattice of the Kac-Moody algebra $\mathfrak{g}$ associated to the Cartan matrix $C$. Denote by  $\Pi=\{\alpha_j,~j\in I\}$ the set of simple roots of $\mathfrak{g}$ which forms a $\Z$-basis of $L$.  Let $(-,-):L\times L\to \Z$ be the symmetric bilinear form  defined by $(\alpha_i,\alpha_j)=f_ic_{ij}$ for $i,j\in I$. We identify $\N I$ with $L^+:=\{\sum_{i\in I}n_i\alpha_i~|~n_i\geq 0,~i\in I\}$.

Let $v$ be an indeterminate.  The {\it free algebra $\mathcal{F}$} is the free $\Q(v)$-algebra generated by $x_j,~j\in I$. If we set $\deg x_j=|x_j|=\alpha_j\in L$, then $\mathcal{F}=\bigoplus\limits_{\alpha\in \N I}\mathcal{F}_\alpha$ is an $\N I$-graded algebra such that $\dim_{\Q(v)}\mathcal{F}_\alpha<\infty$ for each $\alpha\in \N I$. In particular, $\mathcal{F}_0=\mathbb{Q}(v)$. Denote by $\mu$ the multiplication of $\mathcal{F}$ and $\eta:\Q(v)\to \mathcal{F}$ the unit of $\mathcal{F}$.

We endow $\mathcal{F}\otimes \mathcal{F}$ with the following twisted multiplication
\[(u_1\otimes z_1)(u_2\otimes z_2):=v^{(|z_1|, |u_2|)}u_1u_2\otimes z_1z_2
\]
for homogeneous elements $u_1,u_2,z_1,z_2$ of $\mathcal{F}$. 
Denote by $\Delta: \mathcal{F}\to \mathcal{F}\otimes \mathcal{F}$ the homomorphism of algebras given by \[\Delta(x_j)= x_j\otimes 1+1\otimes x_j~ \text{for} ~j\in I.\]  Let $\epsilon:\mathcal{F}\to \Q(\nu)$ be the $\mathbb{Q}(v)$-linear map defined by $\epsilon(\mathcal{F}_\alpha)=0$ for $\alpha\neq 0$ and $\epsilon(a)=a$ for $a\in \mathcal{F}_0$. Then $(\mathcal{F}, \mu,\eta, \Delta, \epsilon)$
 is an $\N I$-graded bialgebra.
 Set $v_i=v^{f_i},~i\in I$.
We may define the quantum numbers $[n]_i$, factorials $[n]_i^!$ and binomials $\left[\begin{array}{c}r\\ k\end{array}\right]_i$ in the variable $v_i$ as in Section~\ref{ss:quantum-binomial}.
 The comulitplication on the monomials are easy to determine.

 \begin{lemma}~\label{l:computation}
 \begin{itemize}
 \item[(1)]  Let $i\in I,~n\in \N$,
 \[\Delta(x_i^n)=\sum_{k=0}^n
v_i^{(n-k)k}\left[\begin{array}{c}n\\ k\end{array}\right]_ix_i^{n-k}\otimes x_i^k; \]
\item[(2)] Let $i_1,\cdots, i_m\in I$ and $\underline{a}=(a_1,\cdots, a_m)\in \N^m$,
\begin{eqnarray*}&&\Delta(x_{i_1}^{a_1}\cdots ~x_{i_m}^{a_m})\\
&=&\sum_{\underline{b}+\underline{c}=\underline{a}}(\prod_{k=1}^m~v_{i_k}^{b_kc_k})v^{\sum_{k<l}c_kb_l(\alpha_{i_k}, \alpha_{i_l})}\left[\begin{array}{c}a_1\\ c_1\end{array}\right]_{i_1}\cdots \left[\begin{array}{c}a_m\\ c_m\end{array}\right]_{i_m}x_{i_1}^{b_1}\cdots x_{i_m}^{b_m}\otimes x_{i_1}^{c_1}\cdots x_{i_m}^{c_m}.
\end{eqnarray*}
 \end{itemize}
 \end{lemma}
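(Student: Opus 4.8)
The plan is to exploit the single fact recorded just above, namely that $\Delta$ is a homomorphism of algebras from $\mathcal{F}$ to $\mathcal{F}\otimes\mathcal{F}$ equipped with the twisted multiplication. This at once gives
\[\Delta(x_{i_1}^{a_1}\cdots x_{i_m}^{a_m}) \;=\; \Delta(x_{i_1})^{a_1}\cdots\Delta(x_{i_m})^{a_m} \;=\; \prod_{k=1}^{m}\Delta(x_{i_k}^{a_k}),\]
so that (2) will follow from (1) once products of ``elementary'' tensors in the twisted algebra are under control, and (1) is merely the one-letter special case. I would settle (1) first and then assemble (2).

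For (1), I would induct on $n$, the cases $n=0,1$ being the definition of $\Delta$. Writing $\Delta(x_i^{n+1}) = (x_i\otimes 1 + 1\otimes x_i)\,\Delta(x_i^n)$, inserting the inductive formula for $\Delta(x_i^n)$, and expanding via $(u_1\otimes z_1)(u_2\otimes z_2) = v^{(|z_1|,|u_2|)}u_1u_2\otimes z_1z_2$ together with $(\alpha_i,\alpha_i) = f_ic_{ii} = 2f_i$ (so $v^{(\alpha_i,\alpha_i)} = v_i^{2}$), the coefficient of $x_i^{n+1-k}\otimes x_i^{k}$ works out to
\[v_i^{(n-k)k}\!\left[\begin{smallmatrix}n\\ k\end{smallmatrix}\right]_i + v_i^{(n+1-k)(k+1)}\!\left[\begin{smallmatrix}n\\ k-1\end{smallmatrix}\right]_i \;=\; v_i^{(n+1-k)k}\!\left(v_i^{-k}\!\left[\begin{smallmatrix}n\\ k\end{smallmatrix}\right]_i + v_i^{n+1-k}\!\left[\begin{smallmatrix}n\\ k-1\end{smallmatrix}\right]_i\right),\]
and the parenthesised factor equals $\left[\begin{smallmatrix}n+1\\ k\end{smallmatrix}\right]_i$ by the Pascal identity of Lemma~\ref{l:pascal-identities} (applied with $v$ replaced by $v_i$). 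A slicker route skips the induction: in the twisted algebra the elements $X := x_i\otimes 1$ and $Y := 1\otimes x_i$ satisfy $YX = v_i^{2}XY$, so the non-commutative $q$-binomial theorem with $q=v_i^{2}$ gives $\Delta(x_i)^{n} = (X+Y)^{n} = \sum_{k}\left[\begin{smallmatrix}n\\ k\end{smallmatrix}\right]_{v_i^{2}}x_i^{n-k}\otimes x_i^{k}$, and one rewrites the unbalanced binomial as $\left[\begin{smallmatrix}n\\ k\end{smallmatrix}\right]_{v_i^{2}} = v_i^{(n-k)k}\left[\begin{smallmatrix}n\\ k\end{smallmatrix}\right]_i$.

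For (2), I would substitute the formula of (1) into each factor of $\prod_{k}\Delta(x_{i_k}^{a_k})$ and expand, the summation index being the pairs of tuples $\underline{b},\underline{c}\in\N^{m}$ with $\underline{b}+\underline{c}=\underline{a}$. A generic term is then the scalar $\prod_{k}v_{i_k}^{b_kc_k}\left[\begin{smallmatrix}a_k\\ c_k\end{smallmatrix}\right]_{i_k}$ times the product $(x_{i_1}^{b_1}\otimes x_{i_1}^{c_1})\cdots(x_{i_m}^{b_m}\otimes x_{i_m}^{c_m})$ evaluated in the twisted algebra. The auxiliary identity to be established — by a straightforward induction on $m$ from the definition of the twist — is
\[(u_1\otimes z_1)\cdots(u_m\otimes z_m) \;=\; v^{\,\sum_{k<l}(|z_k|,|u_l|)}\;u_1\cdots u_m\otimes z_1\cdots z_m\]
for homogeneous $u_k,z_k$; specialising $|u_l|=b_l\alpha_{i_l}$ and $|z_k|=c_k\alpha_{i_k}$ turns the exponent into $\sum_{k<l}c_kb_l(\alpha_{i_k},\alpha_{i_l})$, precisely the extra factor in the asserted formula. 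The only genuine difficulty is the exponent bookkeeping: one must combine correctly the diagonal contributions $v_{i_k}^{b_kc_k}$ produced by the one-letter comultiplications with the off-diagonal contributions $v^{(|z_k|,|u_l|)}$, $k<l$, produced by the twist, all the while keeping the tuples $\underline{b}$ and $\underline{c}$ consistently ordered. The rest of the argument is formal.
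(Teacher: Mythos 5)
Your proposal is correct, and it follows exactly the route the paper indicates: part (1) by induction on $n$ using the Pascal identity of Lemma~\ref{l:pascal-identities}, and part (2) by combining (1) with the fact that $\Delta$ is an algebra homomorphism into the twisted tensor-product algebra. You have simply written out the exponent bookkeeping that the paper leaves implicit, including the useful auxiliary formula for $(u_1\otimes z_1)\cdots(u_m\otimes z_m)$, and your Pascal-identity manipulation and twist-exponent calculation both check out.
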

 \begin{proof}
 The statement $(1)$ can be proved by induction on $n$ and  the Pascal identities. The statement $(2)$ follows from $(1)$ and the fact that $\Delta$ is a homomorphism of algebras.
 \end{proof}

 The {\it quantum shuffle algebra}~\cite{Green97} associated to the Cartan matrix $C$ is the graded dual
 $(\mathcal{F}^*, \Delta^*,\epsilon^*,\mu^*,\eta^*)$ of $(\mathcal{F}, \mu,\eta, \Delta, \epsilon)$ which is  again an $\N I$-graded bialgebra.
 Let $W=\cup_{m\geq 0}I^m$ be the set of words in alphabet $I$. For each ${\bf u}=(j_1j_2\cdots j_m)\in W$, set $x_{\bf u}=x_{j_1}x_{j_2}\cdots x_{j_m}\in \mathcal{F}$. It is clear that $\{x_{\bf u}~|~{\bf u}\in W\}$ forms a $\Q(v)$-basis of $\mathcal{F}$. Let $\{y_{\bf u}~|~{\bf u}\in W\}$ be the basis of $\mathcal{F}^*$ dual to $\{x_{\bf u}~|~{\bf u}\in W\}$. Then it is easy to compute the comultiplicatioin $\mu^*$ of $\mathcal{F}^*$:
 \[\mu^*(y_{\bf u})=\sum_{({\bf u_1}, {\bf u_2})={\bf u} }y_{\bf u_1}\otimes y_{\bf u_2}.
 \]
In order to compute the multiplication $\Delta^*$ of $\mathcal{F}^*$, we need some more notation. Let $\Sigma_r$ be the symmetry group of $r$ letters. We denote an element $\sigma\in \Sigma_r$ by $\sigma=(\sigma_1,\cdots, \sigma_r)$, where $\sigma_1,\cdots, \sigma_r$ is a permutation of $1,2,\cdots, r$. Let $\sigma_k^{-1}$ be the preimage of $k$ under $\sigma$ and define
\[\Sigma_{r,s}:=\{\sigma\in \Sigma_{r+s}~|~\sigma_1^{-1}<\cdots<\sigma_r^{-1}~\text{and}~\sigma_{r+1}^{-1}<\cdots<\sigma_{r+s}^{-1}\}.
\]
Using the definition of $\Delta^*$,  for ${\bf u_1}=(j_1,\cdots, j_r),~{\bf u_2}=(j_{r+1},\cdots, j_{r+s})\in W$, we have
\[\Delta^*(y_{\bf u_1}\otimes y_{\bf u_2})=\sum_{\sigma\in \Sigma_{r,s}}v^{s(\sigma,{\bf u_1}, {\bf u_2})}y_{\sigma({\bf u_1},{\bf u_2})},
\]
where $s(\sigma,{\bf u_1},{\bf u_2})=\sum_{k\leq r,l>r, \sigma_k^{-1}>\sigma_l^{-1}}(\alpha_{j_k}, \alpha_{j_l})$ and $\sigma({\bf u_1},{\bf u_2})=(j_{\sigma_1}, j_{\sigma_2},\cdots, j_{\sigma_{r+s}})$.

\subsection{Quantum enveloping algebra}
Recall that $C=(c_{ij})\in M_{I\times I}(\Z)$ is a symmetrizable Cartan matrix.
The {\it quantum enveloping algebra} $U_+$ is the $\Q(v)$-algebra generated by $E_j, ~j\in I$ subject to the quantum Serre relations
\[ \sum_{r=0}^{1-c_{ij}}(-1)^rE_i^{[r]}E_jE_i^{[1-c_{ij}-r]}=0, ~ \text{for}~ i\neq j,
\]
where $E_i^{[r]}=\frac{1}{[r]_i^{!}}E_i^r$. Define $\deg E_j=|E_j|=\alpha_j\in L$, then $U_+=\bigoplus\limits_{\alpha\in \N I}(U_+)_\alpha$ is an $\N I$-graded algebra which is a quotient algebra of $\mathcal{F}$.

In fact, the algebra $U_+$ has a bialgebra structure whose comultiplication  $\Delta:U_+\to U_+\otimes U_+$ is given by $\Delta(E_i)=E_i\otimes 1+1\otimes E_i,~i\in I$ and the counit $\epsilon: U_+\to \Q(v)$ is given by $\epsilon(E_i)=0,~i\in I$. The canonical projection $\mathcal{F}\to U_+, x_i\mapsto E_i,~i\in I$ is a homomorphism of bialgebras. Moreover, there is a non-degenerate symmetric bilinear form $(-,-)_{U_+}:U_+\times U_+\to \Q(v)$ satisfying the condition of Proposition~\ref{p:non-degenerate-bilinear-form}.
Therefore the quantum enveloping algebra $U_+$ is isomorphic to its graded dual $U_+^*$.
In particular, let $e_i,~i\in I$ be the basis of $(U_+^*)_{\alpha_i}$ dual to $E_i\in (U_+)_{\alpha_i}$, then $\pi:\mathcal{F}\to U_+^*, x_i\mapsto e_i$ is a homomorphism of bialgebras. For more properties of $U_+$, we refer to ~\cite{Lusztig93}.

\subsection{Quantum polynomial algebras}
Let ${\bf w}=(i_1i_2\cdots i_m)\in W$ be a fixed word. The {\it quantum polynomial algebra} $\mathbb{P}_{\bf w}$ associated to the Cartan matrix $C$ and the word ${\bf w}$ is the $\Q(v)$-algebra generated by $t_1,\cdots, t_m$ subject to the relation \[t_lt_k=v^{(\alpha_{i_k}, \alpha_{i_l})}t_kt_l~\text{ for $k<l$}.\]

For $\underline{a}=(a_1,a_2,\cdots, a_m)\in \N^{m}$, set $t^{\underline{a}}:=t_1^{a_1}t_2^{a_2}\cdots t_m^{a_m}$. It is clear that $\{t^{\underline{a}}~|~\underline{a}\in \N^m\}$ is a $\Q(v)$-basis of $\mathbb{P}_{\bf w}$ and we have
\[t^{\underline{a}}t^{\underline{b}}=v^{\sum_{k<l}b_ka_l(\alpha_{i_k}, \alpha_{i_l})}t^{\underline{a}+\underline{b}}.
\]

Set $\deg t_k=|t_k|=\alpha_{i_k}, ~1\leq k\leq m$. Then the  algebra $\mathbb{P}_{\bf w}$ is $\N I$-graded and each component is finite-dimensional.
Let $\mathbb{P}_{\bf w}^*$ be the graded dual of $\mathbb{P}_{\bf w}$ which is a coalgebra. Denote by $\Delta=\mu^*$ and $\epsilon=\eta^*$ respectively  the comultiplication and counit of $\mathbb{P}_{\bf w}^*$. Let $\{t_{\underline{a}}~|~\underline{a}\in \N^m\}$ be the basis of $\mathbb{P}_{\bf w}^*$ dual to $\{t^{\underline{a}}~|~\underline{a}\in \N^m\}$. A direct calculation shows that
\begin{lemma}
For $\underline{a}\in \N^m$,
\[\Delta(t_{\underline{a}})=\sum_{\underline{b}+\underline{c}=\underline{a}}v^{\sum_{k<l}c_kb_l(\alpha_{i_k}, \alpha_{i_l})}t_{\underline{b}}\otimes t_{\underline{c}}.
\]
\end{lemma}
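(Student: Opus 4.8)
The plan is to unwind the definition $\Delta=\mu^*$ and to test the claimed identity against the distinguished basis of $\mathbb{P}_{\bf w}\otimes\mathbb{P}_{\bf w}$. Concretely, $\{t^{\underline{b}}\otimes t^{\underline{c}}\mid \underline{b},\underline{c}\in\N^m\}$ is a $\Q(v)$-basis of $\mathbb{P}_{\bf w}\otimes\mathbb{P}_{\bf w}$, and by the very definition of the induced pairing $\langle-,-\rangle_{\mathbb{P}_{\bf w}\otimes\mathbb{P}_{\bf w}}$ on simple tensors its dual basis is $\{t_{\underline{b}}\otimes t_{\underline{c}}\}$. Hence an element of $\mathbb{P}_{\bf w}^*\otimes\mathbb{P}_{\bf w}^*$ (in particular $\Delta(t_{\underline{a}})$, which by the $\N I$-grading and finite-dimensionality of the homogeneous components genuinely lands in this tensor product, not merely in the full dual of $\mathbb{P}_{\bf w}\otimes\mathbb{P}_{\bf w}$) is uniquely determined by its pairings against all $t^{\underline{b}}\otimes t^{\underline{c}}$. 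So it suffices to compute both sides after pairing with an arbitrary such element.

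First I would evaluate the left-hand side. The defining property of the adjoint gives
\[
\langle \Delta(t_{\underline{a}}),\, t^{\underline{b}}\otimes t^{\underline{c}}\rangle_{\mathbb{P}_{\bf w}\otimes\mathbb{P}_{\bf w}}
=\langle t_{\underline{a}},\, \mu(t^{\underline{b}}\otimes t^{\underline{c}})\rangle_{\mathbb{P}_{\bf w}}
=\langle t_{\underline{a}},\, t^{\underline{b}}t^{\underline{c}}\rangle_{\mathbb{P}_{\bf w}}.
\]
Substituting the multiplication rule $t^{\underline{b}}t^{\underline{c}}=v^{\sum_{k<l}c_kb_l(\alpha_{i_k},\alpha_{i_l})}t^{\underline{b}+\underline{c}}$ recorded just above the statement, and using that $\{t_{\underline{a}}\}$ is dual to $\{t^{\underline{a}}\}$, this equals $v^{\sum_{k<l}c_kb_l(\alpha_{i_k},\alpha_{i_l})}$ when $\underline{b}+\underline{c}=\underline{a}$ and $0$ otherwise.

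Next I would evaluate the right-hand side against the same $t^{\underline{b}}\otimes t^{\underline{c}}$, expanding over simple tensors:
\[
\Big\langle \sum_{\underline{b}'+\underline{c}'=\underline{a}}v^{\sum_{k<l}c'_kb'_l(\alpha_{i_k},\alpha_{i_l})}t_{\underline{b}'}\otimes t_{\underline{c}'},\ t^{\underline{b}}\otimes t^{\underline{c}}\Big\rangle
=\sum_{\underline{b}'+\underline{c}'=\underline{a}}v^{\sum_{k<l}c'_kb'_l(\alpha_{i_k},\alpha_{i_l})}\langle t_{\underline{b}'},t^{\underline{b}}\rangle\,\langle t_{\underline{c}'},t^{\underline{c}}\rangle,
\]
where the outer sum is finite because $\underline{a}$ is fixed. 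Only the term with $\underline{b}'=\underline{b}$ and $\underline{c}'=\underline{c}$ contributes, and it is present exactly when $\underline{b}+\underline{c}=\underline{a}$, in which case its value is $v^{\sum_{k<l}c_kb_l(\alpha_{i_k},\alpha_{i_l})}$. This matches the left-hand side term by term, which proves the lemma.

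I do not anticipate any genuine obstacle here: the entire content is the bookkeeping of the twist exponent $\sum_{k<l}c_kb_l(\alpha_{i_k},\alpha_{i_l})$ coming from the defining relation of $\mathbb{P}_{\bf w}$, together with the compatibility of graded duals with tensor products recalled in Section~\ref{s:preliminary}. The only point worth stating with care is the remark above about $\Delta(t_{\underline{a}})$ lying in $\mathbb{P}_{\bf w}^*\otimes\mathbb{P}_{\bf w}^*$, which is immediate from the grading; everything else is a direct computation of the type already used for $\mu^*$ on $\mathcal{F}^*$.
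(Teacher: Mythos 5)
Your proof is correct and is precisely the "direct calculation" the paper alludes to: pair $\Delta(t_{\underline{a}})=\mu^*(t_{\underline{a}})$ against the basis $\{t^{\underline{b}}\otimes t^{\underline{c}}\}$, apply the multiplication rule $t^{\underline{b}}t^{\underline{c}}=v^{\sum_{k<l}c_kb_l(\alpha_{i_k},\alpha_{i_l})}t^{\underline{b}+\underline{c}}$, and match coefficients. The exponent bookkeeping and the remark that $\Delta(t_{\underline{a}})$ lands in $\mathbb{P}_{\bf w}^*\otimes\mathbb{P}_{\bf w}^*$ by the grading are both handled correctly.
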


\section{Feigin-type maps}~\label{s:Feigin-typemaps}
\subsection{Reminder on valued quiver}
A {\it valued graph}  is a pair $(I, {\bf d})$ consisting of a finite set $I$ of vertices together with non-negative integers $d_{ij}$ for all $i, j\in I$ such that $d_{ii}=0$ and there exists positive integers $\{f_i, ~i\in I\}$ satisfying $f_id_{ij}=f_jd_{ji}$ for all $i, j\in I$.
A pair $(i, j)\in I\times I$ is an {\it edge} of the valued graph $(I, {\bf d})$ provided $d_{ij}\neq 0$.
An {\it orientation} $\Omega$ of a valued graph $(I, {\bf d})$ is to assign each edge $(i, j)$ an arrow $i\to j$ or $j\to i$. In this case, we obtain a quiver $Q$ with vertex set $I$.
A {\it valued quiver} is a valued graph $(I, {\bf d})$ endowed with an orientation $\Omega$. In the following, we always denote a valued quiver by $(Q, {\bf d})$ and assume that $Q$ is acyclic.

Let $K$ be a field and $(Q, {\bf d})$ a valued quiver. A {\it reduced $K$-species} of $(Q, {\bf d})$ is a pair $\mathcal{M}=\{\mathbb{F}_{i}, {}_{i}M_{j}\}_{i, j\in I}$ consisting of division rings $\mathbb{F}_{i}$ over $K$ with $\dim_{K}\mathbb{F}_{i}=f_{i}$ for $i\in I$ and an $\mathbb{F}_{i}$-$\mathbb{F}_{j}$-bimodule ${}_{i}M_{j}$ with $\dim_{K}~_{i}M_{j}=f_id_{ij}$ for each arrow $i\to j$.
A finite-dimensional $K$-representation $V=(V_{i}, \varphi_{\alpha})$ of $(Q, {\bf d})$ consists of a finite-dimensional $\mathbb{F}_{i}$-vector space $V_{i}$ for $i\in I$ and an $\mathbb{F}_{j}$-linear map $_{j}\varphi_{i}: V_{i}\otimes_{\mathbb{F}_{i}}{}_{i}M_{j}\to V_{j}$ for each arrow $\alpha:i\to j$.

Let $V=(V_{i}, \varphi_{\alpha})$ and $W=(W_{i}, \psi_{\alpha})$ be two finite-dimensional $K$-representations of $(Q, {\bf d})$. A morphism $(f_{i})_{i\in I}$ from $V$ to $W$ consists of an $\mathbb{F}_{i}$-linear map $f_{i}:V_{i}\to W_{i}$ for each $i\in I$ such that the following diagram is commutative for each arrow $\alpha: i\to j$:
\[\xymatrix{V_{i}\otimes_{\mathbb{F}_{i}} {}_{i}M_{j}\ar[d]^{f_{i}\otimes \id}\ar[r]^{~~_{j}\varphi_{i}}&V_{j}\ar[d]^{f_{j}}\\
W_{i}\otimes_{\mathbb{F}_{i}} {}_{i}M_{j}\ar[r]^{~~_{j}\psi_{i}}&W_{j}.
}
\]

Let $\rep(Q, {\bf d})$ be the category of all the finite-dimensional $K$-representations of $(Q, {\bf d})$. It is a hereditary abelian category over $K$. It is also well-known that there is a finite-dimensional hereditary $K$-algebra $\Lambda$ such that the category $\mod \Lambda$ of finite-dimensional $\Lambda$-modules is equivalent to $\rep(Q, {\bf d})$. For an $M\in \rep(Q, {\bf d})$, we will denote by $\add M$ the subcategory of $\rep(Q, {\bf d})$ consisting of objects which are finite direct sum of direct summands of $M$.

 For a given valued quiver $(Q, {\bf d})$, we associate a matrix $C_{(Q, {\bf d})}=(c_{ij})\in M_{I\times I}(\Z)$ to $(Q, {\bf d})$ by defining $c_{ij}=\begin{cases}2&i=j;\\ -d_{ij}& i\neq j.
\end{cases}$. It is clear that $C_{(Q, {\bf d})}$ is a symmetrizable Cartan matrix with symmetrizer ${\bf f}=\{f_{i}, ~i\in I\}$. Moreover,  the valued quiver $(Q, {\bf d})$ is of representation-finite type if and only if the Cartan matrix $C_{(Q, {\bf d})}$ is of finite type.
 We refer to ~\cite{DR} for more details.

\subsection{Ringel-Hall algebras}
Let $K$ be a finite field with $|K|=q$. Set $v^2=q$ in this subsection.
For a given Cartan matrix $C\in M_{I\times I}(\Z)$, we may associate a valued quiver $(Q, {\bf d})$ without oriented cycles to $C$ such that  $C_{(Q, {\bf d})}=C$. Let $S_i,~ i\in I$ be the pairwise non-isomorphic simple representations of $(Q, {\bf d})$.
Denote by $\go(Q, {\bf d})$ the Grothendieck group of $\rep(Q, {\bf d})$. The Euler bilinear form $\langle-,-\rangle$ on $\go(Q, {\bf d})$ is given by
\[\langle |M|, |N|\rangle:=\dim_K\Hom(M,N)-\dim_K\Ext^1(M,N),
\]
where $|M|$ stands for the image of $M\in \rep(Q, {\bf d})$ in $\go(Q, {\bf d})$. It is well-known that $|S_i|,~i\in I$ form a $\Z$-basis of $\go(Q, {\bf d})$ and we may identify $\go(Q, {\bf d})$ with the root lattice $L$ associated to $C$. Under this identification, $|S_i|,~i\in I$ identify with the simple roots $\alpha_i,~i\in I$ and the symmetric bilinear form $(-,-):L\times L\to \Z$ identifies with the symmetrization of the Euler form $\langle-,-\rangle$. In other words, $(\alpha,\beta)=\langle \alpha, \beta\rangle+\langle\beta,\alpha\rangle$ for $\alpha,\beta\in L$.

 For any $M, N, L\in \rep(Q, {\bf d})$, the {\it Hall number} $F_{M,N}^L$ counts the subrepresentations $X$ of $L$ such that $X\cong N$ and $L/X\cong M$. In general, for any $L, X_1,\cdots, X_t\in \rep(Q, {\bf d})$, then number $F_{X_1,\cdots, X_t}^L$ counts the  filtrations \[0=L_{t}\subset L_{t-1}\subset \cdots \subset L_1\subset L_0=L
 \]
 such that $L_{k-1}/L_k\cong X_k,~k=1,\cdots, t-1$.
For a representation $X\in \rep(Q, {\bf d})$, we denote by $[X]$ the isoclass of $X$.
The Ringel-Hall algebra $\mathcal{H}(Q)$ of $\rep(Q, {\bf d})$ is an associative algebra with unit $[0]$ (we also denote it by $\eta:\mathbb{Q}(v)\to \mathcal{H}(Q)$) whose underlying space is the $\Q(v)$-vector space spanned by the isomorphism classes of representations of $(Q, {\bf d})$ and the  multiplication $\mu$ is given by
\[[M]*[N]=\sum_{[L]}v^{\langle |M|,|N|\rangle}F_{M,N}^L[L],
\]
where $M,N,L\in \rep(Q,{\bf d})$. It is clear that $\mathcal{H}(Q)=\bigoplus\limits_{\alpha\in \N I}\mathcal{H}(Q)_\alpha$ is $\N I$-graded, where $\mathcal{H}(Q)_\alpha=\opname{span}\{[M]~|~M\in \rep(Q, {\bf d}), |M|=\alpha\}$.

Let  $(-,-)_{\mathcal{H}(Q)}:\mathcal{H}(Q)\times \mathcal{H}(Q)\to \Q(v)$ be the non-degenerate symmetric bilinear form defined by $([M], [N])_{\mathcal{H}(Q)}=\frac{1}{|\Aut(M)|}\delta_{M,N}$.
We endow the tensor product $\mathcal{H}(Q)\otimes \mathcal{H}(Q)$ with the twisted multiplication
\[(U_1\otimes V_1)*(U_2\otimes V_2):=v^{(|V_1|, |U_2|)}U_1*U_2\otimes V_1*V_2
\]
and define a $\Q(v)$-linear map $\Delta: \mathcal{H}(Q)\to \mathcal{H}(Q)\otimes \mathcal{H}(Q)$ as
\[\Delta([L])=\sum_{[M],[N]}v^{\langle |M|, |N|\rangle}\frac{|\Aut(M)||\Aut(N)|}{|\Aut(L)|}F_{M,N}^L[M]\otimes [N].
\]
Finally, define a $\Q(v)$-linear map $\epsilon:\mathcal{H}(Q)\to \Q(v)$ by $\epsilon([M])=\begin{cases}0& \text{if $M\not\cong 0$};\\1& \text{if $M\cong 0$}.\end{cases}$
\begin{theorem}~\label{t:Hallalgebra}
\begin{itemize}
\item[(1)](Ringel~\cite{Ringel}) The assignment $E_j\mapsto [S_j]$ for $j\in I$ extends to a homomorphism of algebras $\Psi: U_+\to \mathcal{H}(Q)$.  Moreover, if $(Q, {\bf d})$ is of representation-finite type, the homomorphism $\Psi$ is an isomorphism;
\item[(2)](Green~\cite{Green}) The algebra $(\mathcal{H}(Q),\mu, \eta, \Delta, \epsilon)$ is an $\N I$-graded bialgebra and the homomorphism $\Psi$ is an injective homomorphism of bialgebras.
\end{itemize}
\end{theorem}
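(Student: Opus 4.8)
The plan is to treat the two assertions in turn, each reducing to already-established structural facts about $\mathcal{F}$, $U_+$ and the graded duals developed in Section~\ref{s:preliminary}. For part~(1), I would first check that the generators $[S_j]$ satisfy the quantum Serre relations inside $\mathcal{H}(Q)$. The computation of $[S_i]^{*k}$ via iterated Hall multiplication is controlled by the numbers $F^L_{S_i,\dots,S_i}$, which count flags of $S_i$'s in $L$; since $\Ext^1(S_i,S_i)=0$ for an acyclic valued quiver one gets $[S_i]^{[k]} = v^{\binom{k}{2}\langle|S_i|,|S_i|\rangle}$ times the class of the semisimple $S_i^k$, up to the appropriate $[k]^!_i$. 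Expanding $\sum_r(-1)^r[S_i]^{[r]}*[S_j]*[S_i]^{[1-c_{ij}-r]}$ then becomes a bookkeeping exercise in Hall numbers of modules supported on the two vertices $i,j$ and an arrow between them; this is exactly Ringel's original verification, and I would cite it rather than redo it. Once the Serre relations hold, $E_j\mapsto[S_j]$ factors through $U_+$ by the universal property of $U_+$ as a quotient of $\mathcal{F}$, giving the algebra homomorphism $\Psi$; it is automatically $\N I$-graded since $|[S_j]|=\alpha_j$. For the representation-finite case, surjectivity follows because the $[S_j]$ generate $\mathcal{H}(Q)$ (every $[M]$ lies in the subalgebra generated by simples, again a standard Hall-algebra fact using a composition series and the triangularity of Hall multiplication with respect to a length filtration), and injectivity then follows from part~(2).

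For part~(2), I would verify that $(\mathcal{H}(Q),\mu,\eta,\Delta,\epsilon)$ is an $\N I$-graded bialgebra by invoking Proposition~\ref{p:non-degenerate-bilinear-form}: the bilinear form $(-,-)_{\mathcal{H}(Q)}$ with $([M],[N])=\delta_{M,N}/|\Aut M|$ is non-degenerate, symmetric, and vanishes between distinct graded components, so it suffices to check the adjunction identities $(U\otimes V,\Delta(L))_{\mathcal{H}\otimes\mathcal{H}} = (\mu(U\otimes V),L)_{\mathcal{H}}$ and $(\eta(1),L)=\epsilon(L)$. The first is precisely the symmetry $F^L_{M,N}\,|\Aut M||\Aut N| = F^{?}_{?}\cdot(\dots)$ encoded in the Riedtmann--Peng--Green formula relating $F^L_{M,N}$ to the "Hall number with the roles of sub and quotient interchanged"; writing both sides out, the coefficient of $[M]\otimes[N]$ on the left, namely $v^{\langle|M|,|N|\rangle}\tfrac{|\Aut M||\Aut N|}{|\Aut L|}F^L_{M,N}\cdot\tfrac1{|\Aut M||\Aut N|}$, must match the coefficient extracted from $(\mu(U\otimes V),L)$ after accounting for the twist $v^{(|V_1|,|U_2|)}$ in the multiplication on $\mathcal{H}(Q)\otimes\mathcal{H}(Q)$; the $v$-powers reconcile because $(\alpha,\beta)=\langle\alpha,\beta\rangle+\langle\beta,\alpha\rangle$. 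Coassociativity and the bialgebra axiom then come for free from Proposition~\ref{p:non-degenerate-bilinear-form} once the pairing hypotheses are met — this is the content of Green's theorem, which I would cite for the delicate counting identity. Finally, that $\Psi$ is a bialgebra homomorphism amounts to checking $\Delta([S_j]) = [S_j]\otimes[0]+[0]\otimes[S_j]$ and $\epsilon([S_j])=0$, both immediate from the definitions, since $\Delta$ and $\epsilon$ on $U_+$ are determined on generators; injectivity of $\Psi$ follows from the general fact that a graded bialgebra map which is injective on the degree-$\alpha_j$ pieces and whose source is generated by those pieces is injective (here one uses that $U_+$ is, by construction, a quotient of the free algebra on the $E_j$ and that the quantum Serre relations are precisely the kernel — Lusztig's non-degeneracy of $(-,-)_{U_+}$ — so no further relations are killed).

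The main obstacle is the bialgebra compatibility in part~(2): verifying that $\Delta$ is an algebra map for the twisted multiplication, equivalently the Green--Riedtmann--Peng counting identity for Hall numbers. This is genuinely the hard combinatorial input and is not a formal consequence of anything in the excerpt; everything else — the Serre relations, the universal property reductions, the graded-duality packaging via Proposition~\ref{p:non-degenerate-bilinear-form}, and the surjectivity in the Dynkin case — is either standard triangularity of Hall multiplication or a direct application of results already recorded above. Accordingly, in the write-up I would state part~(2)'s bialgebra structure as Green's theorem and spend the detailed argument only on the adjunction bookkeeping needed to feed Proposition~\ref{p:non-degenerate-bilinear-form}, and on confirming that $\Psi$ respects $\Delta$ and $\epsilon$ on generators.
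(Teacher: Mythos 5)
The paper gives no proof of Theorem~\ref{t:Hallalgebra}; it is attributed to Ringel~\cite{Ringel} and Green~\cite{Green} and used as a black box, so your task here was to reconstruct the standard arguments. Your sketch is a fair account of how those go, with the genuinely hard ingredients (the Serre-relation check and Green's counting identity) correctly identified and outsourced. There is, however, a genuine gap in your handling of injectivity. The ``general fact'' you invoke --- that a graded bialgebra map which is injective on the degree-$\alpha_j$ pieces and whose source is generated by those pieces is injective --- is false: the canonical projection $\mathcal{F}\to U_+$ is itself such a map, injective on each component of degree $\alpha_j$, with source generated by those pieces, and its kernel is the entire Serre ideal whenever Serre relations are present. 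Your parenthetical names the right ingredient (Lusztig's non-degeneracy of $(-,-)_{U_+}$) but never actually deploys it; ``no further relations are killed'' is an assertion, not an argument.

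What Green's proof of injectivity actually does is show that the non-degenerate form $(-,-)_{\mathcal{H}(Q)}$ pulls back along $\Phi\colon\mathcal{F}\to\mathcal{H}(Q)$ to Lusztig's bilinear form on $\mathcal{F}$ --- a compatibility that itself rests on the same hard counting identity that makes $\Delta$ multiplicative. Given that, one has $\ker\Phi\subseteq\operatorname{rad}(-,-)_{\mathcal{F}}$, and Lusztig's theorem identifies $\operatorname{rad}(-,-)_{\mathcal{F}}$ with the ideal of quantum Serre relations; combined with the containment in the other direction from part~(1), this forces $\ker\Phi$ to be exactly the Serre ideal, i.e.\ $\Psi$ is injective. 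That pairing-compatibility step is indispensable and cannot be replaced by a formal graded-generation argument, so you should replace the first half of your justification with it rather than merely alluding to it in a parenthetical.
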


Note that the multiplication $\mu$ and the comultiplication $\Delta$ form an adjoint pair with respect to the non-degenerate bilinear form $(-,-)_{\mathcal{H}(Q)}$.
Let $\mathcal{H}^*(Q)$ be the graded dual of $\mathcal{H}(Q)$ and $\{\delta_{[M]}~|~M\in\rep(Q, {\bf d})\}$ the basis of $\mathcal{H}^*(Q)$ dual to $\{[M]~|~M\in \rep(Q, {\bf d})\}$. Then $(\mathcal{H}(Q), \mu,\eta, \Delta, \epsilon)$ is isomorphic to $(\mathcal{H}^*(Q), \Delta^*, \epsilon^*, \mu^*,\eta^*)$ as bialgebras. Moreover, the linear map $U_+\to \mathcal{H}^*(Q)$ defined by $E_j\mapsto \delta_{[S_j]},~j\in I$ is also an injective homomorphism of algebras.

\subsection{Feigin-type maps}~\label{ss:Feigin-type-maps}
Recall that for a given Cartan matrix $C\in M_{I\times I}(\Z)$, we have three $\N I$-graded bialgebras: the quantum enveloping algebra $U_+$, the quantum shuffle algebra $\mathcal{F}^*$ and the dual Ringel-Hall algebra $\mathcal{H}^*(Q)$. For a fixed word ${\bf w}\in W$, there is a quantum polynomial algebra $\mathbb{P}_{\bf w}$. There are various homomorphisms  of algebras between these four algebras which have been studied extensively due to its connection to quantum groups, cluster algebras and motivic Donaldson-Thomas invariants ({\it cf.} ~\cite{BR, Leclerc,Reineke, Reineke10,Rupel}). In particular, we have the following.

\begin{theorem}~\label{t:Feigin-type maps}
Fix a word ${\bf w}=(i_1i_2\cdots i_m)\in W$.
\begin{itemize}
\item[(1)](\text{Feigin}) The linear map
 \begin{eqnarray*}
\mathbf{F_w}: &U_+&\to \mathbb{P}_{\bf w}\\
&E_j&\mapsto \sum_{1\leq k\leq m, i_k=j}t_k
\end{eqnarray*}
is a homomorphism of algebras.
\item[(2)](Berenstein-Rupel~\cite{BR}) The linear map
\begin{eqnarray*}
\int_{\bf w}: &\mathcal{H}^*(Q)&\to \mathbb{P}_{\bf w}\\
&\delta_{[M]}&\mapsto \sum_{\underline{a}\in \N^m}v^{\sum_{k<l}a_ka_l\langle \alpha_{i_k}, \alpha_{i_l}\rangle+\sum_{k=1}^mf_{i_k}\frac{a_k(a_k-1)}{2}}F_{S_{i_1}^{a_1},\cdots, S_{i_m}^{a_m}}^Mt^{\underline{a}}
\end{eqnarray*}
is a homomorphism of algebras, where $S_{i_k}^{a_k}$ is the direct sum of $a_k$ copies of $S_{i_k}$.
\item[(3)](Rupel~\cite{Rupel})
The linear map
\begin{eqnarray*}
\mathbf{\Omega}: &\mathcal{H}^*(Q)&\to \mathcal{F}^*\\
&\delta_{[M]}&\mapsto \sum_{{\bf u}=(j_1j_2\cdots j_r)\in W}v^{\sum_{k<l}\langle \alpha_{j_k},\alpha_{j_l}\rangle}F^M_{S_{j_1},\cdots, S_{j_r}}y_{\bf u}
\end{eqnarray*}
is a homomorphism of bialgebra.
\item[(4)](Rupel~\cite{Rupel}) The linear map
\begin{eqnarray*}
\mathbf{S}_{\bf w}:&\mathcal{F}^*&\to \mathbb{P}_{\bf w}\\
&y_{\bf u}&\mapsto \sum_{\underline{a}\in \N^m, {\bf u}=(i_1^{a_1}i_2^{a_2}\cdots i_m^{a_m})}\prod_{k=1}^m v_{i_k}^{-\frac{a_k(a_k-1)}{2}}\frac{1}{[a_1]_{i_1}^!\cdots [a_m]_{i_m}^!}t^{\underline{a}}
\end{eqnarray*}
 is a homomorphism of algebra. Moreover, $\int_{\bf w}=\mathbf{S}_{\bf w}\circ \mathbf{\Omega}$.
\end{itemize}
\end{theorem}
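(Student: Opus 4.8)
The plan is to prove all four statements by passing to graded duals. Every algebra, coalgebra and bialgebra occurring here is $\N I$-graded with finite-dimensional homogeneous components, and by the discussion preceding Proposition~\ref{p:non-degenerate-bilinear-form} a graded linear map $f\colon A^{*}\to B$ from the graded dual of an $\N I$-graded bialgebra $A$ to an $\N I$-graded algebra $B$ is a homomorphism of algebras exactly when its adjoint $f^{*}\colon B^{*}\to A$ is a homomorphism of coalgebras, and likewise with ``bialgebra'' everywhere; moreover taking adjoints is an involution once each space is identified with its graded double dual. Using $U_{+}\cong U_{+}^{*}$ and $\mathcal{H}(Q)\cong\mathcal{H}^{*}(Q)$ as bialgebras, it therefore suffices to compute the adjoints of $\mathbf{\Omega}$, $\mathbf{S}_{\bf w}$ and $\int_{\bf w}$ and to recognize them as familiar maps on the side of $\mathcal{F}$ and $\mathcal{H}(Q)$.

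For (3): pairing $\mathbf{\Omega}(\delta_{[M]})$ against the basis $\{x_{\bf u}\}$ of $\mathcal{F}$ and comparing with the iterated Hall product $[S_{j_{1}}]*\cdots*[S_{j_{r}}]=\sum_{[M]}v^{\sum_{k<l}\langle\alpha_{j_{k}},\alpha_{j_{l}}\rangle}F^{M}_{S_{j_{1}},\dots,S_{j_{r}}}[M]$ shows that the adjoint $\mathbf{\Omega}^{*}\colon\mathcal{F}\to\mathcal{H}(Q)$ sends $x_{j_{1}}x_{j_{2}}\cdots x_{j_{r}}$ to $[S_{j_{1}}]*[S_{j_{2}}]*\cdots*[S_{j_{r}}]$; equivalently, $\mathbf{\Omega}^{*}$ is the composition of the canonical bialgebra projection $\mathcal{F}\to U_{+}$ with Ringel's map $\Psi$. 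By Theorem~\ref{t:Hallalgebra} this composition is a homomorphism of bialgebras, hence so is its adjoint $\mathbf{\Omega}$.

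For (4): the analogous pairing computation gives that the adjoint $\mathbf{S}_{\bf w}^{*}\colon\mathbb{P}_{\bf w}^{*}\to\mathcal{F}$ sends $t_{\underline{b}}$ to $\bigl(\prod_{k=1}^{m}v_{i_{k}}^{-\binom{b_{k}}{2}}\bigr)\bigl(\prod_{k=1}^{m}[b_{k}]_{i_{k}}^{!}\bigr)^{-1}x_{i_{1}}^{b_{1}}x_{i_{2}}^{b_{2}}\cdots x_{i_{m}}^{b_{m}}$, and one checks directly that this is a homomorphism of $\N I$-graded coalgebras: expanding $\Delta\bigl(x_{i_{1}}^{b_{1}}\cdots x_{i_{m}}^{b_{m}}\bigr)$ by Lemma~\ref{l:computation}(2) and $\Delta(t_{\underline{b}})$ by the formula of Section~\ref{s:preliminary}, the two sides of $\Delta\circ\mathbf{S}_{\bf w}^{*}=(\mathbf{S}_{\bf w}^{*}\otimes\mathbf{S}_{\bf w}^{*})\circ\Delta$ evaluated at $t_{\underline{b}}$ agree term by term because $\left[\begin{smallmatrix}c_{k}+d_{k}\\ d_{k}\end{smallmatrix}\right]_{i_{k}}=[c_{k}+d_{k}]_{i_{k}}^{!}/\bigl([c_{k}]_{i_{k}}^{!}[d_{k}]_{i_{k}}^{!}\bigr)$ and $\binom{c_{k}}{2}+\binom{d_{k}}{2}+c_{k}d_{k}=\binom{c_{k}+d_{k}}{2}$ (the counit condition is immediate), whence $\mathbf{S}_{\bf w}$ is a homomorphism of algebras. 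For the identity $\int_{\bf w}=\mathbf{S}_{\bf w}\circ\mathbf{\Omega}$ I would compute $\mathbf{S}_{\bf w}\bigl(\mathbf{\Omega}(\delta_{[M]})\bigr)$ directly on the basis: the coefficient of $t^{\underline{a}}$ there equals the coefficient of $y_{\bf u}$ in $\mathbf{\Omega}(\delta_{[M]})$, for the word ${\bf u}=(i_{1}^{a_{1}}i_{2}^{a_{2}}\cdots i_{m}^{a_{m}})$, multiplied by $\bigl(\prod_{k}v_{i_{k}}^{-\binom{a_{k}}{2}}\bigr)\bigl(\prod_{k}[a_{k}]_{i_{k}}^{!}\bigr)^{-1}$. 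Refining a filtration of $M$ with composition series prescribed by ${\bf u}$ block by block yields $F^{M}_{S_{j_{1}},\dots,S_{j_{r}}}=\bigl(\prod_{p=1}^{m}F^{S_{i_{p}}^{a_{p}}}_{S_{i_{p}},\dots,S_{i_{p}}}\bigr)F^{M}_{S_{i_{1}}^{a_{1}},\dots,S_{i_{m}}^{a_{m}}}$, where $F^{S_{i}^{a}}_{S_{i},\dots,S_{i}}$ is the number of complete flags in an $a$-dimensional $\mathbb{F}_{i}$-vector space, namely $v_{i}^{\binom{a}{2}}[a]_{i}^{!}$; these scalars cancel precisely those produced by $\mathbf{S}_{\bf w}^{*}$, and the powers of $v$ contributed by the Euler form along the word ${\bf u}$ rearrange into the exponent $\sum_{k<l}a_{k}a_{l}\langle\alpha_{i_{k}},\alpha_{i_{l}}\rangle+\sum_{k}f_{i_{k}}\frac{a_{k}(a_{k}-1)}{2}$ appearing in $\int_{\bf w}$. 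Since $\mathbf{\Omega}$ and $\mathbf{S}_{\bf w}$ are homomorphisms of algebras, this proves (2) as well.

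Finally, (1) follows from (2): by the statement recalled after Theorem~\ref{t:Hallalgebra} the assignment $E_{j}\mapsto\delta_{[S_{j}]}$ extends to an injective algebra homomorphism $\iota\colon U_{+}\to\mathcal{H}^{*}(Q)$, and $F^{S_{j}}_{S_{i_{1}}^{a_{1}},\dots,S_{i_{m}}^{a_{m}}}$ vanishes unless exactly one $a_{k}$ equals $1$, with $i_{k}=j$, and all other $a_{l}$ vanish; in that case both exponents occurring in $\int_{\bf w}$ are zero, so $\int_{\bf w}(\delta_{[S_{j}]})=\sum_{k\colon i_{k}=j}t_{k}=\mathbf{F_w}(E_{j})$, and hence $\mathbf{F_w}=\int_{\bf w}\circ\iota$ is a homomorphism of algebras. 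I expect the identity $\int_{\bf w}=\mathbf{S}_{\bf w}\circ\mathbf{\Omega}$ to be the principal obstacle: everything else is formal bialgebra duality or routine quantum-binomial bookkeeping, whereas this identity hinges on the single genuinely combinatorial ingredient---counting refinements of filtrations into simple factors via complete flags over the division rings $\mathbb{F}_{i}$---and on carefully tracking the three families of powers of $v$ (the Euler-form contributions, the $f_{i_{k}}\frac{a_{k}(a_{k}-1)}{2}$ terms, and the flag-count factors $v_{i}^{\binom{a}{2}}$).
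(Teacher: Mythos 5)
Your proof is correct and follows essentially the same graded-dual strategy as the paper: you identify $\mathbf{\Omega}$ as the adjoint of the bialgebra map $\Phi\colon\mathcal{F}\to\mathcal{H}(Q)$, $x_i\mapsto[S_i]$, and $\mathbf{S}_{\bf w}$ as the adjoint of the coalgebra map $\mathbf{T}_{\bf w}\colon\mathbb{P}_{\bf w}^*\to\mathcal{F}$, then obtain (2) as their composite and (1) by precomposing with the Ringel--Green embedding $E_j\mapsto\delta_{[S_j]}$ (the latter is the exact route the paper mentions in a remark, though its main proof uses $\pi\colon\mathcal{F}\to U_+^*$ instead). The one place you diverge---and where you expect the main difficulty---is the identity $\int_{\bf w}=\mathbf{S}_{\bf w}\circ\mathbf{\Omega}$: you verify it by direct flag counting via $F^{S_i^a}_{S_i,\dots,S_i}=v_i^{a(a-1)/2}[a]_i^!$ and the refinement formula for Hall numbers, whereas the paper gets it for free from functoriality of adjoints, $(\Phi\circ\mathbf{T}_{\bf w})^*=\mathbf{T}_{\bf w}^*\circ\Phi^*$, once $\int_{\bf w}$ has been identified with $(\Phi\circ\mathbf{T}_{\bf w})^*$ by a single evaluation-pairing computation (which itself hides the same flag count inside the formula $[S_i]^{*a}=v_i^{a(a-1)}[a]_i^![S_i^a]$). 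So the combinatorial content is the same, just placed on a different side of the adjunction; the paper's arrangement makes the composite identity automatic rather than the principal obstacle.
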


The morphism $\mathbf{F_w}$ is called  {\it Feigin's map} of type ${\bf w}$, which was first proposed by B. Feigin as a tool for studying the skew-field of fractions of $(U_+)^*$.   The morphism $\int_{\bf w}$ is called {\it generalized Feigin's homomorphism} in~\cite{Rupel}. For special choice of words ${\bf w}$, these morphisms were used  to study the associated quantum cluster algebras~\cite{BR}.  The morphism $\mathbf{\Omega}$ is called {\it quantum shuffle character} in ~\cite{Rupel} which establishes a connection between representations of the valued quiver $(Q, {\bf d})$ and the irreducible representations of the associated KLR algebra.

The rest of this section is devoted to give an alternative proof for the aforementioned  homomorphisms. Here we emphasize the graded dual approach which not only simplifies a lot of calculations in ~\cite{Rupel}, but  also leads to a representation-theoretic understanding of the injectivity of Feigin's map for certain special words ${\bf w}$ in Section~\ref{s:Feiginmap}.

\subsection{The proof}
\begin{lemma}~\label{l:Green}
The linear map
\begin{eqnarray*}
\Phi: &\mathcal{F}&\to \mathcal{H}(Q)\\
&x_i&\mapsto [S_i]
\end{eqnarray*}
is a homomorphism of bialgebras.
\end{lemma}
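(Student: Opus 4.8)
The plan is to exploit that $\mathcal{F}$ is a free algebra, so that the only genuine content lies in the compatibility with the coalgebra structures. Since $\mathcal{F}$ is the free $\Q(v)$-algebra on $\{x_i\}_{i\in I}$, the assignment $x_i\mapsto[S_i]$ extends uniquely to an algebra homomorphism $\Phi$, and because $\deg x_i=\alpha_i=|S_i|$ this $\Phi$ is automatically a homomorphism of $\N I$-graded algebras. It then remains to check that $\Phi$ intertwines the comultiplications and the counits.

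The counit is immediate: $\epsilon_{\mathcal{H}(Q)}\circ\Phi$ and $\epsilon_{\mathcal{F}}$ are both algebra homomorphisms $\mathcal{F}\to\Q(v)$ that agree on the generators, since $\epsilon_{\mathcal{H}(Q)}([S_i])=0=\epsilon_{\mathcal{F}}(x_i)$, hence they coincide. For the comultiplication, the first observation is that $\Phi\otimes\Phi$ is an algebra homomorphism from $\mathcal{F}\otimes\mathcal{F}$ to $\mathcal{H}(Q)\otimes\mathcal{H}(Q)$, both equipped with their respective twisted multiplications: this holds precisely because $\Phi$ preserves degrees and the two twistings are both prescribed by the symmetric bilinear form $(-,-)$ on degrees. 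Consequently both $\Delta_{\mathcal{H}(Q)}\circ\Phi$ and $(\Phi\otimes\Phi)\circ\Delta_{\mathcal{F}}$ are algebra homomorphisms $\mathcal{F}\to\mathcal{H}(Q)\otimes\mathcal{H}(Q)$ --- here I invoke Green's theorem (Theorem~\ref{t:Hallalgebra}(2)) that $\Delta_{\mathcal{H}(Q)}$ is multiplicative --- and, by freeness of $\mathcal{F}$, they coincide as soon as they agree on each $x_i$.

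Checking this on generators is a one-line Hall-algebra computation: $S_i$ is simple, so in the defining sum for $\Delta_{\mathcal{H}(Q)}([S_i])$ only the pairs $(M,N)=(S_i,0)$ and $(0,S_i)$ contribute, each with $F_{M,N}^{S_i}=1$, Euler exponent $\langle\alpha_i,0\rangle=\langle 0,\alpha_i\rangle=0$, and automorphism factor $1$; therefore $\Delta_{\mathcal{H}(Q)}([S_i])=[S_i]\otimes[0]+[0]\otimes[S_i]=(\Phi\otimes\Phi)(x_i\otimes1+1\otimes x_i)$. This yields $\Delta_{\mathcal{H}(Q)}\circ\Phi=(\Phi\otimes\Phi)\circ\Delta_{\mathcal{F}}$, and together with the counit identity it follows that $\Phi$ is a homomorphism of $\N I$-graded bialgebras. (Alternatively, one may simply note that $\Phi$ is the composite $\mathcal{F}\to U_+\to\mathcal{H}(Q)$ of the canonical bialgebra projection with Ringel's map $\Psi$, which is a bialgebra homomorphism by Theorem~\ref{t:Hallalgebra}.) There is no serious obstacle here; the only point demanding attention is that $\Phi\otimes\Phi$ respects the twisted multiplications, which is exactly the role played by the grading hypothesis.
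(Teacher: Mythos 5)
Your proof follows exactly the same strategy as the paper's: use freeness of $\mathcal{F}$ to reduce the algebra-map claim to nothing, invoke Green's theorem that $\Delta_{\mathcal{H}(Q)}$ is multiplicative so that both sides of the comultiplication square are algebra maps, and then check equality on the generators, where it comes down to $[S_i]$ being primitive. Your version is correct and simply spells out the primitivity of $[S_i]$ as a small Hall-number computation, plus a sensible remark that $\Phi\otimes\Phi$ respects the twisted multiplications because $\Phi$ is degree-preserving.
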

\begin{proof}
Recall that $\mathcal{F}$ is the free associative algebra generated by $x_i,~\in I$, it is clear that $\Phi$ is a homomorphism of algebras. It remains to show that the following diagrams are commutative
\[\xymatrix{\mathcal{F}\ar[d]^{\Delta_{\mathcal{F}}}\ar[r]^{\Phi}&\mathcal{H}(Q)\ar[d]^{\Delta_{\mathcal{H}(Q)}}\\ \mathcal{F}\otimes \mathcal{F}\ar[r]^{\Phi\otimes \Phi}&\mathcal{H}(Q)\otimes\mathcal{H}(Q)}~~\text{and}~
 \xymatrix{\mathcal{F}\ar[d]^{\Phi}\ar[r]^{\epsilon_{\mathcal{F}}}&\Q(v).\\ \mathcal{H}(Q)\ar[ur]_{\epsilon_{\mathcal{H}(Q)}}}
\]
By Theorem~\ref{t:Hallalgebra}~$(2)$, we know that $\Delta_{\mathcal{H}(Q)}$ is a homomorphism of algebras, hence $\Delta_{\mathcal{H}(Q)}\circ \Phi$ and $(\Phi\otimes \Phi)\circ \Delta_{\mathcal{F}}$ are homomorphisms of algebras. It suffices to prove that $\Delta_{\mathcal{H}(Q)}\circ \Phi(x_i)=(\Phi\otimes \Phi)\circ \Delta_{\mathcal{F}}(x_i)$ for any $i\in I$, which is evident, since $[S_i]$ is primitive in $\mathcal{H}(Q)$. The commutativity of the other diagram follows similarly.
\end{proof}

\begin{lemma}~\label{l:coalgebra}
Let ${\bf w}=(i_1i_2\cdots i_m)\in W$.
The linear map
\begin{eqnarray*}
\mathbf{T}_{\bf w}: &\mathbb{P}_{\bf w}^*&\to \mathcal{F}\\
&t_{\underline{a}}&\mapsto \prod_{k=1}^m v_{i_k}^{\frac{-a_k(a_k-1)}{2}}\frac{1}{[a_1]_{i_1}^!\cdots [a_m]_{i_m}^!}x_{i_1}^{a_1}\cdots x_{i_m}^{a_m}
\end{eqnarray*}
is a homomorphism of coalgebras.
\end{lemma}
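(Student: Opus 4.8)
The plan is to dualize. Since $\mathbf{T}_{\bf w}$ is defined on the dual basis $\{t_{\underline a}\}$ of $\mathbb P_{\bf w}^*$, it is natural to check that its adjoint $\mathbf{T}_{\bf w}^* : \mathcal F^* \to \mathbb P_{\bf w}$ is a homomorphism of algebras; by the general principle recorded in Section~\ref{s:preliminary} (the adjoint of an algebra map is a coalgebra map, and the second adjoint recovers the original map on graded-finite-dimensional bialgebras), this is equivalent to the assertion. In fact I expect $\mathbf{T}_{\bf w}^*$ to be exactly the map $\mathbf{S}_{\bf w}$ of Theorem~\ref{t:Feigin-type maps}(4), so the cleanest route is: first compute $\mathbf{T}_{\bf w}^*$ explicitly on the dual basis $\{y_{\bf u}\}$ of $\mathcal F^*$, identify it with $\mathbf{S}_{\bf w}$, and then either invoke that $\mathbf{S}_{\bf w}$ is an algebra homomorphism or — if one wants this lemma to be logically prior — prove directly that $\mathbf{T}_{\bf w}$ respects comultiplication and counit.

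For the direct verification I would proceed as follows. The counit compatibility is immediate: $t_{\underline 0}\mapsto 1 = x_\emptyset$ and both counits pick out the degree-$0$ component. For comultiplication, apply $\Delta_{\mathbb P_{\bf w}^*}$ to $t_{\underline a}$ using the formula $\Delta(t_{\underline a})=\sum_{\underline b+\underline c=\underline a} v^{\sum_{k<l} c_k b_l(\alpha_{i_k},\alpha_{i_l})} t_{\underline b}\otimes t_{\underline c}$ from the last lemma of Section~\ref{s:preliminary}, push it through $\mathbf{T}_{\bf w}\otimes\mathbf{T}_{\bf w}$, and compare with $\Delta_{\mathcal F}\bigl(\mathbf{T}_{\bf w}(t_{\underline a})\bigr)$ computed via Lemma~\ref{l:computation}(2). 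Lemma~\ref{l:computation}(2) gives
\[
\Delta(x_{i_1}^{a_1}\cdots x_{i_m}^{a_m})=\sum_{\underline b+\underline c=\underline a}\Bigl(\prod_{k=1}^m v_{i_k}^{b_kc_k}\Bigr) v^{\sum_{k<l} c_k b_l(\alpha_{i_k},\alpha_{i_l})} \left[\begin{array}{c}a_1\\ c_1\end{array}\right]_{i_1}\cdots \left[\begin{array}{c}a_m\\ c_m\end{array}\right]_{i_m} x_{i_1}^{b_1}\cdots x_{i_m}^{b_m}\otimes x_{i_1}^{c_1}\cdots x_{i_m}^{c_m},
\]
and the scalar bookkeeping then reduces to the elementary identity, for each index $k$,
\[
v_{i_k}^{-\frac{a_k(a_k-1)}{2}}\frac{1}{[a_k]_{i_k}^!}\cdot v_{i_k}^{b_kc_k}\left[\begin{array}{c}a_k\\ c_k\end{array}\right]_{i_k}= v_{i_k}^{-\frac{b_k(b_k-1)}{2}}\frac{1}{[b_k]_{i_k}^!}\cdot v_{i_k}^{-\frac{c_k(c_k-1)}{2}}\frac{1}{[c_k]_{i_k}^!},
\]
which holds because $b_k+c_k=a_k$ forces $\tfrac{b_k(b_k-1)}{2}+\tfrac{c_k(c_k-1)}{2}+b_kc_k=\tfrac{a_k(a_k-1)}{2}$ and $\left[\begin{array}{c}a_k\\ c_k\end{array}\right]_{i_k}=\tfrac{[a_k]_{i_k}^!}{[b_k]_{i_k}^![c_k]_{i_k}^!}$. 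The cross-terms $v^{\sum_{k<l} c_k b_l(\alpha_{i_k},\alpha_{i_l})}$ match on the nose, so the two expressions for $(\mathbf{T}_{\bf w}\otimes\mathbf{T}_{\bf w})\Delta(t_{\underline a})$ and $\Delta\,\mathbf{T}_{\bf w}(t_{\underline a})$ coincide.

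The main obstacle is purely organizational rather than conceptual: one must be careful that the twisted multiplication on $\mathcal F\otimes\mathcal F$ (and hence the comultiplication $\Delta_{\mathcal F}$) uses the symmetrized form $(\alpha_{i_k},\alpha_{i_l})$, that $v_{i_k}=v^{f_{i_k}}$ so the $v_{i_k}$-powers and the ambient $v$-powers are on the same footing, and that the ordering convention $k<l$ in the exponent on the $\mathbb P_{\bf w}^*$ side agrees with the one coming out of Lemma~\ref{l:computation}(2). Once these conventions are pinned down the identity above does all the work. I would present the proof by stating that it suffices to check the two defining diagrams, dispatching the counit in one line, and then carrying out the comultiplication comparison via Lemma~\ref{l:computation}(2) and the per-index scalar identity just displayed.
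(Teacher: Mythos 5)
Your proof is correct and follows essentially the same route as the paper: a direct check that $\Delta_{\mathcal F}\circ\mathbf{T}_{\bf w} = (\mathbf{T}_{\bf w}\otimes\mathbf{T}_{\bf w})\circ\Delta_{\mathbb P_{\bf w}^*}$ by applying Lemma~\ref{l:computation}(2) and comparing coefficients of $x_{i_1}^{b_1}\cdots x_{i_m}^{b_m}\otimes x_{i_1}^{c_1}\cdots x_{i_m}^{c_m}$. The per-index scalar identity you display is exactly what the paper leaves implicit in its ``comparing the coefficients'' step, so you have merely made that bookkeeping explicit.
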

\begin{proof}
We are going to show that the following diagram is  commutative
\[\xymatrix{\mathbb{P}_{\bf w}^*\ar[d]_{\Delta_{\mathbb{P}_{\bf w}^*}}\ar[r]^{\bf T_{ w}}&\mathcal{F}\ar[d]^{\Delta_{\mathcal{F}}}\\ \mathbb{P}_{\bf w}^*\otimes \mathbb{P}_{\bf w}^*\ar[r]^{\mathbf{T}_{\bf w}\otimes \mathbf{T}_{\bf w}}&\mathcal{F}\otimes \mathcal{F}.}
\]
For $\underline{a}\in \N^m$, set $z_{\bf w}(\underline{a}):=\prod_{k=1}^m v_{i_k}^{\frac{-a_k(a_k-1)}{2}}\frac{1}{[a_1]_{i_1}^!\cdots [a_m]_{i_m}^!}$ and $x_{\bf w}^{\underline{a}}:=x_{i_1}^{a_1}\cdots x_{i_m}^{a_m}$.  We have
\begin{eqnarray*}
(\mathbf{T}_{\bf w}\otimes \mathbf{T}_{\bf w})\circ \Delta_{\mathbb{P}_{\bf w}^*}(t_{\underline{a}})&=&\mathbf{T}_{\bf w}\otimes \mathbf{T}_{\bf w}(\sum_{\underline{b}+\underline{c}=\underline{a}}v^{\sum_{k<l}c_kb_l(\alpha_{i_k},\alpha_{i_l})}~t_{\underline{b}}\otimes t_{\underline{c}})\\
&=&\sum_{\underline{b}+\underline{c}=\underline{a}}v^{\sum_{k<l}c_kb_l(\alpha_{i_k},\alpha_{i_l})}z_{\bf w}(\underline{b})z_{\bf w}(\underline{c})x_{\bf w}^{\underline{b}}\otimes x_{\bf w}^{\underline{c}}.
\end{eqnarray*}
On the other hand, by Lemma~\ref{l:computation}, we have
\begin{eqnarray*}
\Delta_{\mathcal{F}}\circ \mathbf{T}_{\bf w}(t_{\underline{a}})&=&z_{\bf w}(\underline{a})\Delta_{\mathcal{F}}(x_{\bf w}^{\underline{a}})\\
&=&z_{\bf w}(\underline{a})\sum_{\underline{b}+\underline{c}=\underline{a}}\prod_{k=1}^m v_{i_k}^{b_kc_k}v^{\sum_{k<l}c_kb_l(\alpha_{i_k},\alpha_{i_l})}\left[\begin{array}{c}a_1\\ c_1\end{array}\right]_{i_1}\cdots \left[\begin{array}{c}a_m\\ c_m\end{array}\right]_{i_m}x_{\bf w}^{\underline{b}}\otimes x_{\bf w}^{\underline{c}}
\end{eqnarray*}
Comparing the coefficients of $x_{\bf w}^{\underline{b}}\otimes x_{\bf w}^{\underline{c}}$, we deduce that
\[\Delta_{\mathcal{F}}\circ \mathbf{T}_{\bf w}(t_{\underline{a}})=(\mathbf{T}_{\bf w}\otimes \mathbf{T}_{\bf w})\circ \Delta_{\mathbb{P}_{\bf w}^*}(t_{\underline{a}}).
\]
\end{proof}

Now we are in a position to prove Theorem~\ref{t:Feigin-type maps}.

\noindent {\it Proof of Theorem~\ref{t:Feigin-type maps}:}

We first prove Theorem~\ref{t:Feigin-type maps}~ (3) and (4).
Let $\Phi^*:\mathcal{H}^*(Q)\to \mathcal{F}^*$ be the adjoint of $\Phi$ and $\mathbf{T}_{\bf w}^*:\mathcal{F}^*\to \mathbb{P}_{\bf w}$ the adjoint of $\mathbf{T}_{\bf w}$. Hence $\Phi^*$ is a homomorphism of bialgebras and $\mathbf{T}_{\bf w}^*$ is a homomorphism of algebras.
We are going to show that $\mathbf{\Omega}=\Phi^*$ and $\mathbf{S}_{\bf w}=\mathbf{T}_{\bf w}^*$.

Let $\langle-,-\rangle_{\mathcal{F}}:\mathcal{F}^*\times \mathcal{F}\to \Q(v)$ and $\langle-,-\rangle_{\mathcal{H}(Q)}:\mathcal{H}^*(Q)\times \mathcal{H}(Q)\to \Q(v)$ be the evaluation pairings.  For any $\delta_{[M]}\in \mathcal{H}^*(Q)$ and ${\bf u}=(j_1j_2\cdots j_r)\in W$, we have
\begin{eqnarray*}
\langle \Phi^*(\delta_{[M]}), x_{\bf u}\rangle_{\mathcal{F}}&=&\langle\delta_{[M]}, \Phi(x_{\bf u})\rangle_{\mathcal{H}(Q)}\\
&=&\langle\delta_{[M]}, [S_{j_1}]*[S_{j_2}]*\cdots *[S_{j_r}]\rangle_{\mathcal{H}(Q)}\\
&=&\sum_{[L]}v^{\sum_{k<l}\langle\alpha_{j_k}, \alpha_{j_l}\rangle}F_{S_{j_1},\cdots ,S_{j_r}}^L\langle \delta_{[M]}, [L]\rangle_{\mathcal{H}(Q)}.
\end{eqnarray*}
Therefore \[\Phi^*(\delta_{[M]})=\sum_{{\bf u}=(j_1j_2\cdots j_r)\in W}v^{\sum_{k<l}\langle \alpha_{j_k},\alpha_{j_l}\rangle}F_{S_{j_1},\cdots,S_{j_r}}^My_{\bf u}=\mathbf{\Omega}(\delta_{[M]}).\]

Let $\langle-,-\rangle_{\mathbb{P}_{\bf w}}:\mathbb{P}_{\bf w}^*\times \mathbb{P}_{\bf w}\to \Q(v)$ be the evaluation pairing. For any $\underline{a}\in \N^m$ and ${\bf u}\in W$, we have
\begin{eqnarray*}
\langle t_{\underline{a}}, \mathbf{T}_{\bf w}^*(y_{\bf u})\rangle_{\mathbb{P}_{\bf w}}&=&\langle \mathbf{T}_{\bf w}(t_{\underline{a}}), y_{\bf u}\rangle_{\mathcal{F}}\\
&=& z_{\bf w}(\underline{a})\langle x_{\bf w}^{\underline{a}}, y_{\bf u}\rangle_{\mathcal{F}}.
\end{eqnarray*}
Note that $\langle x_{\bf w}^{\underline{a}}, y_{\bf u}\rangle_{\mathcal{F}}\neq 0$ if and only if ${\bf u}=(i_1^{a_1},i_2^{a_2},\cdots, i_m^{a_m})$. Thus we have
\[\mathbf{T}_{\bf w}^*(y_{\bf u})=\sum_{\underline{a}\in \N^m, {\bf u}=(i_1^{a_1}i_2^{a_2}\cdots i_m^{a_m})}z_{\bf w}(\underline{a})t^{\underline{a}}=\mathbf{S}_{\bf w}(y_{\bf u}).
\]

To prove Theorem~\ref{t:Feigin-type maps} (2),
we notice  that $\Phi\circ \mathbf{T}_{\bf w}:\mathbb{P}_{\bf w}^*\to \mathcal{H}(Q)$ is a homomorphism of coalgebras and hence its adjoint $(\Phi\circ \mathbf{T}_{\bf w})^*:\mathcal{H}^*(Q)\to \mathbb{P}_{\bf w}$ is a homomorphism of algebras.
We claim that $\int_{\bf w}=(\Phi\circ \mathbf{T}_{\bf w})^*$. For $\delta_{[M]}\in \mathcal{H}^*(Q)$ and $\underline{a}\in \N^m$, we have
\begin{eqnarray*}
\langle (\Phi\circ \mathbf{T}_{\bf w})^*(\delta_{[M]}), t_{\underline{a}}\rangle_{\mathbb{P}_{\bf w}}
&=&\langle \delta_{[M]}, (\Phi\circ \mathbf{T}_{\bf w})(t_{\underline{a}})\rangle_{\mathcal{H}(Q)}\\
&=&z_{\bf w}(\underline{a})\langle \delta_{[M]}, \Phi(x_{\bf w}^{\underline{a}})\rangle_{\mathcal{H}(Q)}.
\end{eqnarray*}
On the other hand,
\begin{eqnarray*}
\Phi(x_{\bf w}^{\underline{a}})&=&\Phi(x_{i_1}^{a_1})\cdots \Phi(x_{i_m}^{a_m})\\
&=&v^{\sum_{k=1}^m\frac{a_k(a_k-1)}{2}\langle \alpha_{i_k}, \alpha_{i_k}\rangle}[S_{i_1}^{a_1}]\cdots [S_{i_m}^{a_m}]\\
&=&v^{\sum_{k=1}^m\frac{a_k(a_k-1)}{2}\langle \alpha_{i_k}, \alpha_{i_k}\rangle}\sum_{[L]}v^{\sum_{k<l}a_ka_l\langle \alpha_{i_k},\alpha_{i_l}\rangle}F^L_{S_{i_1}^{a_1},\cdots, S_{i_m}^{a_m}}[L].
\end{eqnarray*}
Note that $\langle \alpha_{i_k},\alpha_{i_k}\rangle=f_{i_k}$, hence we have
\[(\Phi\circ T_{\bf w})^*(\delta_{[M]})=\sum_{\underline{a}\in \N^m}v^{\sum_{k<l}a_ka_l\langle \alpha_{i_k}, \alpha_{i_l}\rangle+\sum_{k=1}^mf_{i_k}\frac{a_k(a_k-1)}{2}}F_{S_{i_1}^{a_1},\cdots, S_{i_m}^{a_m}}^Mt^{\underline{a}}=\int_{\bf w}(\delta_{[M]}).
\]
Moreover, $\int_{\bf w}=(\Phi\circ \mathbf{T}_{\bf w})^*=\mathbf{T}_{\bf w}^*\circ \Phi^*=\mathbf{S}_{\bf w}\circ \mathbf{\Omega}$.

Finally, recall that we have a homomorphism of bialgebras $\pi:\mathcal{F}\to U_+^* $ by $\pi(x_i)=e_i,~i\in I$. In order to prove Theorem~\ref{t:Feigin-type maps} (1), it suffices to prove that $\mathbf{F_w}=(\pi\circ \mathbf{T}_{\bf w})^*$.

Let $\langle-,-\rangle_{U_+}: U_+\times U_+^*\to \Q(v)$ be the evaluation pairing.
For any $\underline{a}\in \N^m$,
\begin{eqnarray*}
\langle (\pi\circ \mathbf{T}_{\bf w})^*(E_j),t_{\underline{a}}\rangle_{\mathbb{P}_{\bf w}}&=&\langle E_j,(\pi\circ \mathbf{T}_{\bf w})(t_{\underline{a}})\rangle_{U_+}\\
&=&z_{\bf w}(\underline{a})\langle E_j, e_{i_1}^{a_1}\cdots e_{i_m}^{a_m}\rangle_{U_+}
\end{eqnarray*}
Note that $\langle E_j, e_{i_1}^{a_1}\cdots e_{i_m}^{a_m}\rangle_{U_+}\neq 0$ if and only if $\exists~ 1\leq k\leq m$ such that $a_k=1, a_l=0, l\neq k$ and $i_k=j$. In this case, $z_{\bf w}(\underline{a})=1$ and hence
\[(\pi\circ \mathbf{T}_{\bf w})^*(E_j)=\sum_{1\leq k\leq m, i_k=j}t_k=\mathbf{F_w}(E_j).
\]
\begin{remark}
Recall that we have a homomorphism $\Psi:U_+\to \mathcal{H}^*(Q)$, one can show that $\mathbf{F_w}=\int_{\bf w}\circ \Psi$ similarly.
\end{remark}

\subsection{Relation to Rupel's notations}
In order to compare the homomorphisms defined above with the ones of~\cite{Rupel}, we need to consider $\mathbb{P}_{\bf w}, \mathcal{H}^*(Q)$ and $\mathcal{F}^*$ over the field $\Q(v^{1/2})$.
In ~\cite{Rupel}, Rupel introduced different basis for $\mathbb{P}_{\bf w}$ and $\mathcal{H}^*(Q)$.

Recall that we have fixed a word ${\bf w}=(i_1i_2\cdots i_m)\in W$.  For any $\underline{a}\in \N^m$, set
\[t_{\bf w}^{\underline{a}}:=v^{\frac{1}{2}\sum_{k<l}a_ka_l(\alpha_{i_k},\alpha_{i_l})}t^{\underline{a}}\in \mathbb{P}_{\bf w}.
\]
It is clear that $\{t_{\bf w}^{\underline{a}}~|~\underline{a}\in \N^m\}$ also forms a $\Q(v^{1/2})$-basis of $\mathbb{P}_{\bf w}$.

For any representation $M\in \rep(Q, {\bf d})$ with $\dimv M=(m_i, i\in I)$, set
\[[M]^*:=v^{-\frac{1}{2}\langle |M|, |M|\rangle+\frac{1}{2}\sum_{i\in I}f_im_i}\delta_{[M]}.
\]
Then $\{[M]^*~|~M\in\rep(Q, {\bf d})\}$ also forms a basis of $\mathcal{H}^*(Q)$. By applying the homomorphism $\int_{\bf w}$ to $[M]^*$ we have
\begin{eqnarray*}
\int_{\bf w}([M]^*)&=&v^{-\frac{1}{2}\langle |M|,|M|\rangle+\frac{1}{2}\sum_{i\in I}f_im_i}\int_{\bf w}(\delta_{[M]})\\
&=&\sum_{\underline{a}\in \N^m}v^{-\frac{1}{2}\langle |M|,|M|\rangle+\frac{1}{2}\sum_{i\in I}d_im_i+\sum_{k<l}a_ka_l\langle\alpha_{i_k},\alpha_{i_l}\rangle+\sum_{k=1}^mf_{i_k}\frac{a_k(a_k-1)}{2}}F^M_{S_{i_1}^{a_1},\cdots, S_{i_m}^{a_m}}t^{\underline{a}}
\end{eqnarray*}
Note that $F^M_{S_{i_1}^{a_1},\cdots,S_{i_m}^{a_m}}\neq 0$ implies that $|M|=\sum_{k=1}^ma_k\alpha_{i_k}$ and hence $\sum_{i\in I}f_im_i=\sum_{k=1}^mf_{i_k}a_k$. We have
\[\frac{1}{2}\langle|M|,|M|\rangle+\frac{1}{2}\sum_{i\in I}d_im_i+\sum_{k=1}^m\frac{a_k(a_k-1)}{2}=-\frac{1}{2}\sum_{k<l}a_ka_l\langle\alpha_{i_k},\alpha_{i_l}\rangle-\frac{1}{2}\sum_{k<l}a_ka_l\langle\alpha_{i_l},\alpha_{i_k}\rangle.
\]
Therefore
\begin{eqnarray*}
\int_{\bf w}([M]^*)&=&\sum_{\underline{a}\in \N^m}v^{\frac{1}{2}\sum_{k<l}a_ka_l\langle \alpha_{i_k},\alpha_{i_l}\rangle-\frac{1}{2}\sum_{k<l}a_ka_l\langle \alpha_{i_l},\alpha_{i_k}\rangle}F^M_{S_{i_1}^{a_1},\cdots, S_{i_m}^{a_m}}t^{\underline{a}}\\
&=&\sum_{\underline{a}\in \N^m}v^{\frac{1}{2}\sum_{k<l}a_ka_l\langle \alpha_{i_k},\alpha_{i_l}\rangle-\frac{1}{2}\sum_{k<l}a_ka_l\langle \alpha_{i_l},\alpha_{i_k}\rangle-\frac{1}{2}\sum_{k<l}a_ka_l(\alpha_{i_k},\alpha_{i_l})}F^M_{S_{i_1}^{a_1},\cdots, S_{i_m}^{a_m}}t_{\bf w}^{\underline{a}}\\
&=&\sum_{\underline{a}\in \N^m}v^{-\sum_{k<l}a_ka_l\langle\alpha_{i_l},\alpha_{i_k}\rangle}F^M_{S_{i_1}^{a_1},\cdots, S_{i_m}^{a_m}}t_{\bf w}^{\underline{a}}.
\end{eqnarray*}
This verifies that $\int_{\bf w}$ coincides with the one of Berenstein-Rupel~\cite{BR}.

For the quantum shuffle algebra $\mathcal{F}^*$, Rupel also considered a new basis.  More precisely,  for any ${\bf u}=(j_1j_2\cdots j_r)\in W$, set
\[z_{\bf u}:=v^{-\frac{1}{2}\sum_{k<l}(\alpha_{j_k},\alpha_{j_l})}y_{\bf u}.
\]
Then $\{z_{\bf u}~|~{\bf u}\in W\}$ is a basis of $\mathcal{F}^*$. For ${\bf u_1}=(j_1j_2\cdots j_r)$ and ${\bf u_2}=(j_{r+1}j_{r+2}\cdots j_{r+s})$, one can verify that
\[z_{\bf u_1} z_{\bf u_2}=\sum_{\sigma\in \Sigma_{r+s}}v^{\zeta(\sigma,{\bf u_1},{\bf w_2})}z_{\sigma({\bf u_1},{\bf u_2})},
\]
where
\[ \zeta(\sigma, {\bf u_1}, {\bf u_2})=\frac{1}{2}\sum_{k\leq r, l>r, \sigma_k^{-1}>\sigma_l^{-1}}(\alpha_{j_k},\alpha_{j_l})-\frac{1}{2}\sum_{k\leq r, l>r, \sigma_k^{-1}<\sigma_l^{-1}}(\alpha_{j_k},\alpha_{j_l}).
\]
Moreover, for $ {\bf u}\in W$,
\[\mu^*(z_{\bf u})=\sum_{({\bf u_1}, {\bf u_2})={\bf u}}v^{\frac{1}{2}(|z_{\bf u_1}|, |z_{\bf u_2}|)}z_{\bf u_1}\otimes z_{\bf u_2}.
\]

Similar to the homomorphism $\int_{\bf w}$, one can verify that
\begin{eqnarray*}
&&\mathbf{\Omega}([M]^*)=\sum_{{\bf u}=(j_1\cdots j_r)\in W}v^{-\sum_{k<l}\langle \alpha_{j_l},\alpha_{j_k}\rangle}F^M_{S_{j_1},\cdots S_{j_r}}z_{\bf u};\\
&&\mathbf{S}_{\bf w}(z_{\bf u})=\sum_{\underline{a}\in \N^m, (i_1^{a_1}\cdots i_m^{a_m})={\bf u}}\frac{1}{[a_1]_{i_1}^!\cdots [a_m]_{i_m}^!}t_{\bf w}^{\underline{a}}.
\end{eqnarray*}

\section{Feigin's map for Cartan matrices of finite type}~\label{s:Feiginmap}

\subsection{Weyl group of Cartan matrix of finite type}~\label{s:Weylgroup}
Let $C\in M_{I\times I}(\Z)$ be a Cartan matrix of finite type and $\mathfrak{g}$ the associated complex semisimple  Lie algebra.  Recall that $L$ is the root lattice of $\mathfrak{g}$ with symmetric bilinear form $(-,-)$ and $\Pi=\{\alpha_i, i\in I\}$ is the set of simple roots. Denote by $\Phi^+$ the set of positive roots of $\mathfrak{g}$.

For each $\alpha\in \Phi^+$, we define  the reflection
\begin{eqnarray*}
r_{\alpha}:&L&\to L\\
&\beta&\mapsto \beta-\frac{2(\beta,\alpha)}{(\alpha,\alpha)}\alpha,
\end{eqnarray*}
which is an automorphism of $L$. Let $W(C)$ be the Weyl group of $\mathfrak{g}$ which is the subgroup of the automorphism group $\Aut(L)$ generated by the simple reflections $r_i:=r_{\alpha_i}, i\in I$. Since $C$ is of finite type, we know that $W(C)$ is a finite group. Each element $\omega\in W(C)$  can be written as a product of the simple reflections, say $\omega=r_{i_1}\cdots r_{i_s}$. The {\it length $l(\omega)$} of $\omega$ is defined to be the smallest $s$ for which such an expression exists. In this case, we call $\omega=r_{i_1}\cdots r_{i_s}$ a {\it reduced expression} of $\omega$. It is well-known that there is a unique element $\omega_0\in W(C)$ which has the maximal length $l(\omega_0)=|\Phi^+|=:\nu$. The element $\omega_0$ is also characterized by the following property:
\[\text{if $\omega\in W(C)$ such that $\omega(\Pi)=-\Pi$, then we have $\omega=\omega_0$.}
\]
It is also well-known how to write down reduced expressions for $\omega_0$ and the reduced expressions of $\omega_0$ have played an important role in the study of canonical bases of the corresponding quantum enveloping algebra~\cite{Lusztig93, Lusztig94}.
We denote by $\mathscr{X}$ the set of words associated to reduced expressions of $\omega_0$. In particular,  $\mathscr{X}$ consists all of  the words $(i_1\cdots i_\nu)$ such that $\omega_0=r_{i_1}\cdots r_{i_\nu}$.

Recall that a {\it cycle} in a category $\mathcal{C}$ consists of indecomposable objects $X_1, \cdots, X_t\in \mathcal{C}$ with $t\geq 2$ and non-isomorphic and non-zero morphisms $g_i: X_i\to X_{i+1}, 1\leq i<t$ such that $X_1\cong X_t$.
 Let $(Q, {\bf d})$ be a valued quiver  associated to the Cartan matrix $C$. We identify the vertex set $Q_0$ with $I$.
Let $\ind (Q, {\bf d})$ be a representative set of isoclass of indecomposable representations of $(Q, {\bf d})$. Denote by $\tau$  the Auslander-Reiten (AR) translation of $(Q, {\bf d})$ and $\tau^{-1}$ its inverse.  Let $P_i$ and $ I_i$ be the indecomposable projective representations and injective representations respectively associated to the vertex $i\in I$.  We define $\mathcal{P}=\{P_i~|~i\in I\}$ and $\mathcal{I}=\{I_i~|~i\in I\}$.
We list the following well-known facts for the valued quiver $(Q,{\bf d})$ associated to the Cartan matrix $C$ of finite type ({\it cf.}~\cite{DR}):
\begin{enumerate}
\item[$\bullet$] the valued quiver $(Q, {\bf d})$ is {\it representation-directed}, that is, there is  no cycle in $\rep(Q, {\bf d})$.
\item[$\bullet$] the dimension vector $\dimv$ induces a bijection between $\ind(Q, {\bf d})$ and the positive roots $\Phi^+$ of the complex semisimple Lie algebra $\mathfrak{g}$ associated to $C$.  In particular, $|\ind(Q, {\bf d})|=|\Phi^+|=\nu$.
\item[$\bullet$] for an $M\in \ind(Q, {\bf d})$, $\tau M=0$ if and only if $M\in \mathcal{P}$;  $\tau^{-1}M=0$ if and only if $M\in \mathcal{I}$. Moreover, the AR translation $\tau$ restricts to a bijection between $\ind(Q, {\bf d})\backslash \mathcal{P}$ and $\ind(Q, {\bf d})\backslash \mathcal{I}$.
\item[$\bullet$] for each $M\in \ind (Q, {\bf d})$, there exists unique $i_M\in I$ and $k_M\in \N$ such that $M\cong \tau^{k_M}I_{i_M}$. Thus we have  a well-defined map
 $\theta_{\tau}:\ind(Q, {\bf d})\to I$ by $\theta_{\tau}(M)=i_M$.
 \item[$\bullet$] set $P=\bigoplus\limits_{i\in I}P_i$, for any $k<l$, one has $\Hom(\tau^{-l}P, \tau^{-k}P)=0$.
\end{enumerate}

 An {\it enumeration} $\mathfrak{e}$ of $(Q, {\bf d})$ is a bijection $\mathfrak{e}:\{1,2,\cdots, \nu\}\to \ind(Q, {\bf d})$ such that
\[\Hom(\mathfrak{e}(i),\mathfrak{e}(j))=0=\Ext^1(\mathfrak{e}(j), \mathfrak{e}(i))~\text{for all $1\leq j<i\leq \nu$.}
 \]
 The valued quiver $(Q, {\bf d})$ is representation-directed implies that there exist enumerations for $(Q, {\bf d})$.
   A word $(i_1i_2\cdots i_\nu)\in \mathscr{X} $ is {\it  adapted to the valued quiver} $(Q, {\bf d})$ if  there exists an enumeration $\mathfrak{e}$ of $(Q, {\bf d})$ such that $\theta_\tau(\mathfrak{e}(\nu+1-k))=i_k, 1\leq k\leq \nu$.
 Note that if the word $(i_1i_2\cdots i_\nu)$ is adapt to $(Q, {\bf d})$, one can construct the enumeration $\mathfrak{e}$ from $(i_1i_2\cdots i_\nu)$ uniquely.
 We remark that our definition of adapted to a valued quiver is opposite to the one of ~\cite{Lusztig90}. In particular, a word ${\bf w}=(i_1\cdots i_\nu)$ is adapted to the valued quiver $(Q, {\bf d})$ in our definition if and only if ${\bf w}$ is adapted to the opposite valued quiver $(Q^{op}, {\bf d})$ in the sense of ~\cite{Lusztig90}.
 The following well-known fact gives the words of reduced expressions of $\omega_0\in W(C)$ which are adapted to the valued quiver $(Q, {\bf d})$~ ({\it cf.}~\cite{Lusztig90}).
 \begin{lemma}~\label{l:reducedexpression}
  For any enumeration $\mathfrak{e}$ of $(Q, {\bf d})$, define the word ${\bf w}_{\mathfrak{e}}:=(i_1,\cdots, i_\nu)$ by $\theta_\tau(\mathfrak{e}(\nu+1-k))=i_k, 1\leq k\leq \nu$. We have
  \[\omega_0=r_{\dimv \mathfrak{e}(\nu)}\circ\cdots \circ r_{\dimv \mathfrak{e}(2)}\circ r_{\dimv\mathfrak{e}(1)}=r_{i_1}\circ r_{i_2}\circ \cdots\circ r_{i_\nu}.
  \]
  In particular,  ${\bf w}_{\mathfrak{e}}$ is a word of a reduced expression of $\omega_0$ adapted to the valued quiver $(Q, {\bf d})$.
 \end{lemma}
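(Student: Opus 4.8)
The plan is to prove, by induction, the more precise statement that $(i_1,\dots ,i_\nu):={\bf w}_{\mathfrak{e}}$ is a reduced word together with the identities
\[
\dimv\mathfrak{e}(\nu+1-k)=r_{i_1}\circ r_{i_2}\circ\cdots\circ r_{i_{k-1}}(\alpha_{i_k})\qquad(1\le k\le\nu).
\]
Once this is known the lemma follows quickly. Each left-hand side is the dimension vector of an indecomposable representation, hence a positive root, so the displayed formula gives $r_{i_1}\circ\cdots\circ r_{i_{k-1}}(\alpha_{i_k})>0$ for every $k$; by the standard length criterion $(i_1,\dots ,i_\nu)$ is then a reduced word, and since its length is $\nu=|\Phi^+|=l(\omega_0)$ it is a word of a reduced expression of $\omega_0$, so $\omega_0=r_{i_1}\circ\cdots\circ r_{i_\nu}$; that ${\bf w}_{\mathfrak{e}}$ is adapted to $(Q,{\bf d})$ then holds by definition, with $\mathfrak{e}$ itself as witnessing enumeration. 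Writing $\gamma_k:=\dimv\mathfrak{e}(\nu+1-k)$ and using $r_{w(\beta)}=w\circ r_\beta\circ w^{-1}$, the product $r_{\dimv\mathfrak{e}(\nu)}\circ\cdots\circ r_{\dimv\mathfrak{e}(1)}=r_{\gamma_1}\circ r_{\gamma_2}\circ\cdots\circ r_{\gamma_\nu}$ telescopes to $r_{i_\nu}\circ r_{i_{\nu-1}}\circ\cdots\circ r_{i_1}=(r_{i_1}\circ\cdots\circ r_{i_\nu})^{-1}=\omega_0^{-1}=\omega_0$, which is the first asserted equality.

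For the base case $k=1$: the last term $\mathfrak{e}(\nu)$ of an enumeration satisfies $\Hom(\mathfrak{e}(\nu),X)=0$ for every indecomposable $X\not\cong\mathfrak{e}(\nu)$. If $\mathfrak{e}(\nu)$ were not injective, the almost split sequence $0\to\mathfrak{e}(\nu)\to E\to\tau^{-1}\mathfrak{e}(\nu)\to 0$ would be non-split with $E\ne 0$, and the nonzero monomorphism $\mathfrak{e}(\nu)\to E$ would have all its nonzero components landing in summands of $E$ isomorphic to $\mathfrak{e}(\nu)$ (since $\mathfrak{e}(\nu)$ maps to no other indecomposable); by representation-directedness such a component is an isomorphism, so the monomorphism would split, a contradiction. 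Hence $\mathfrak{e}(\nu)$ is injective. Since an indecomposable injective $I_j$ has a nonzero morphism to $I_a$ whenever $a\to j$ is an arrow (then $S_a$ is a composition factor, hence a subquotient, of $I_j$), the vertex $i_1=\theta_\tau(\mathfrak{e}(\nu))$ must be a source of $Q$, so $\mathfrak{e}(\nu)=I_{i_1}=S_{i_1}$ and $\dimv\mathfrak{e}(\nu)=\alpha_{i_1}$.

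For the inductive step I pass to the reflected valued quiver $(s_{i_1}Q,{\bf d})$ (the arrows at the source $i_1$ reversed, so that $i_1$ becomes a sink) via the Bernstein--Gelfand--Ponomarev/Dlab--Ringel reflection functor $F:\rep(Q,{\bf d})\to\rep(s_{i_1}Q,{\bf d})$ at $i_1$, using the following standard properties: $F$ annihilates $S_{i_1}$; it restricts to a bijection from $\ind(Q,{\bf d})\setminus\{S_{i_1}\}$ onto $\ind(s_{i_1}Q,{\bf d})\setminus\{P\}$, where $P$ is the simple projective of $(s_{i_1}Q,{\bf d})$ at $i_1$, preserving $\Hom$- and $\Ext^1$-spaces; $\dimv F(M)=r_{i_1}(\dimv M)$; and $F$ is compatible with the Auslander--Reiten translations, $\theta_\tau^{s_{i_1}Q}\circ F=\theta_\tau^{Q}$ on $\ind(Q,{\bf d})\setminus\{S_{i_1}\}$. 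One then checks that $\mathfrak{e}'$ given by $\mathfrak{e}'(1)=P$ and $\mathfrak{e}'(k)=F(\mathfrak{e}(k-1))$ for $k\ge 2$ is an enumeration of $(s_{i_1}Q,{\bf d})$ (here one uses that $P$, being simple projective, is minimal for the relevant order and satisfies $\Ext^1(P,-)=0$), and that ${\bf w}_{\mathfrak{e}'}=(i_2,i_3,\dots ,i_\nu,i^\ast)$ for some last letter $i^\ast$. The induction hypothesis applied at index $k-1$ to the triple $(s_{i_1}Q,{\bf d},\mathfrak{e}')$ gives $\dimv\mathfrak{e}'(\nu+2-k)=r_{i_2}\circ\cdots\circ r_{i_{k-1}}(\alpha_{i_k})$, while $\dimv\mathfrak{e}(\nu+1-k)=r_{i_1}\bigl(\dimv\mathfrak{e}'(\nu+2-k)\bigr)$ by the action of $F$ on dimension vectors; combining these yields the required identity, and the induction (run simultaneously on $k$ over all finite-type valued quivers and all enumerations, the base case being as above) is complete.

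The step I expect to be the main obstacle is assembling the compatibilities of $F$ with the Auslander--Reiten structure, above all the identity $\theta_\tau^{s_{i_1}Q}\circ F=\theta_\tau^{Q}$: its verification rests on $F(I_j^{Q})\cong I_j^{s_{i_1}Q}$ for $j\ne i_1$, on $F$ carrying the $\tau$-orbit of $S_{i_1}^{Q}$ (with $S_{i_1}^{Q}$ itself deleted) onto the $\tau$-orbit of $I_{i_1}^{s_{i_1}Q}$, and on the intertwining $F\circ\tau_{Q}\cong\tau_{s_{i_1}Q}\circ F$ away from these exceptional objects. All of this is classical for species (Dlab--Ringel), but it has to be handled with some care; the remaining ingredients — the base case, the telescoping identity, and the length criterion for reduced words — are routine.
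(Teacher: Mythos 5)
The paper does not prove this lemma at all: it states it as a well-known fact and cites Lusztig \cite{Lusztig90}. So there is no in-paper argument to compare against; your attempt is a free-standing proof.

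That said, your proof is correct, and it is essentially the standard reflection-functor argument (Bernstein--Gelfand--Ponomarev / Dlab--Ringel / Auslander--Platzeck--Reiten) that the cited literature relies on. The key chain is sound: the last term $\mathfrak{e}(\nu)$ of an enumeration receives no irreducible maps from other indecomposables, hence must be injective (your almost-split-sequence argument works; alternatively, the inclusion $\mathfrak{e}(\nu)\hookrightarrow I_{\mathrm{soc}}$ is a nonzero map to an injective), and the no-outgoing-maps condition then forces its socle vertex to be a source, so $\mathfrak{e}(\nu)=S_{i_1}$. The induction on $k$ via the reflection $F^-_{i_1}$ at the source $i_1$ correctly transports the identity $\dimv\mathfrak{e}(\nu+1-k)=r_{i_1}\cdots r_{i_{k-1}}(\alpha_{i_k})$, using that $\mathfrak{e}'=(P,F\mathfrak{e}(1),\dots,F\mathfrak{e}(\nu-1))$ is again an enumeration (the simple projective $P$ of $s_{i_1}Q$ at the new sink indeed admits no nonzero maps from other indecomposables, since any such would be a split surjection $X\twoheadrightarrow P$) and that $\theta_\tau^{s_{i_1}Q}\circ F^-_{i_1}=\theta_\tau^Q$. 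You are right to single that last identity out as the nontrivial input: it amounts to $F^-_{i_1}(I_j^Q)\cong I_j^{s_{i_1}Q}$ for $j\ne i_1$, the intertwining $F^-_{i_1}\circ\tau_Q\cong\tau_{s_{i_1}Q}\circ F^-_{i_1}$ on the non-exceptional range, and $F^-_{i_1}(\tau_Q S_{i_1}^Q)\cong I_{i_1}^{s_{i_1}Q}$, all of which are part of the classical Dlab--Ringel species machinery. The closing step — using that each $\dimv\mathfrak{e}(\nu+1-k)$ is a positive root together with the standard criterion $l(ws_j)>l(w)\iff w(\alpha_j)>0$, the maximality of $l(\omega_0)=\nu$, and the conjugation formula $r_{w(\beta)}=wr_\beta w^{-1}$ for the telescoping of $r_{\gamma_1}\cdots r_{\gamma_\nu}$ — is also correct. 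In short: a correct proof, supplying exactly the argument the paper omits and leaves to Lusztig \cite{Lusztig90}.
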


 Let ${\bf w}=(i_1i_2\cdots i_\nu)$ be a word in $ \mathscr{X}$. If $(\alpha_{i_k}, \alpha_{i_{k+1}})=0$, then it is clear that ${\bf w}'=(i_1\cdots i_{k-1}i_{k+1}i_{k}i_{k+2}\cdots i_\nu)$ is also a word in $\mathscr{X}$. In this case, we say ${\bf w}'$ is obtained from ${\bf w}$ by a {\it short braid relation}. Let ${\bf w_1}=(i_1i_2\cdots i_\nu)$ and ${\bf w}_2=(j_1j_2\cdots j_\nu)$ be any two words of reduced expressions of $\omega_0$ which are adapted to the valued quiver $(Q,{\bf d})$. A well-known fact is that ${\bf w}_2$ can be obtained from ${\bf w}_1$ by  finitely many steps of short braid relations.
 Note that there are reduced expressions of $\omega_0$ which are not adapted to any valued quivers.

\subsection{Order $\leq_{\mathfrak{e}}$  of $\rep(Q, {\bf d})$ induced by an enumeration $\mathfrak{e}$}~\label{s:order}
From now on,  denote by $Q_0=I=\{1,2,\cdots,n\}$ and since $(Q, {\bf d})$ is acyclic, we may enumerate the vertices of $(Q, {\bf d})$ in  such a way that
\[\text{if there is an arrow from vertex $i$ to $j$, then $i<j$.}
\]

Fix an enumeration $\mathfrak{e}$ of $(Q, {\bf d})$ and denote by $\mathfrak{e}(k)=M_k$ for $1\leq k\leq \nu$. For each $M\in \rep(Q,{\bf d})$, there exists a unique vector $\underline{\mathfrak{e}}(M)=(a_1^M, \cdots, a_\nu^M)\in \N^\nu$ such that
\[M\cong M_1^{a_1^M}\oplus M_2^{a_2^M}\oplus \cdots \oplus M_\nu^{a_\nu^M}.
\]
For $M,N\in \rep(Q, {\bf d})$, we define
{\it $M<_{\mathfrak{e}}N$}~if there exists $1\leq k\leq \nu$ such that $a_1^M=a_1^N,\cdots, a_{k-1}^M=a_{k-1}^N$ and $ a_k^M<a_k^N$.
This defines a total order $\leq_{\mathfrak{e}}$ on $\rep(Q, {\bf d})$.
The following result shows that the order $\leq_{\mathfrak{e}}$ is compatible with quotients.
\begin{lemma}~\label{l:ordersurjective}
Let $X, Y, Z\in \rep(Q, {\bf d})$ such that there is a short exact sequence $0\to X\to Y\to Z\to 0$, then we have $Z\leq_{\mathfrak{e}} Y$.
\end{lemma}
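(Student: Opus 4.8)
The plan is to detect the multiplicity vector $\underline{\mathfrak{e}}(M)=(a_1^M,\dots,a_\nu^M)$ of a representation $M$ from the Hom–dimensions $h_k(M):=\dim_K\Hom(M,M_k)$, $1\le k\le \nu$, where $M_k=\mathfrak{e}(k)$, and to exploit that these numbers can only decrease when one passes to a quotient.

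First I would record the relevant triangularity. Since $h_k$ is additive on direct sums, $h_k(M)=\sum_{i=1}^{\nu}a_i^M\dim_K\Hom(M_i,M_k)$. The defining property of the enumeration $\mathfrak{e}$, namely $\Hom(\mathfrak{e}(i),\mathfrak{e}(j))=0$ for $j<i$, gives $\Hom(M_i,M_k)=0$ whenever $i>k$, while $\Hom(M_k,M_k)=\End(M_k)$ has some positive finite $K$–dimension $c_k\ge 1$. Hence
\[
h_k(M)=\sum_{i=1}^{k}a_i^M\dim_K\Hom(M_i,M_k)=c_k\,a_k^M+\sum_{i=1}^{k-1}a_i^M\dim_K\Hom(M_i,M_k),
\]
so $h_k(M)$ depends only on $a_1^M,\dots,a_k^M$; in particular, if two representations $M,N$ satisfy $a_i^M=a_i^N$ for all $i<k$ and $h_k(M)\le h_k(N)$, then $a_k^M\le a_k^N$.

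Next I would apply the contravariant left-exact functor $\Hom(-,M_k)$ to the short exact sequence $0\to X\to Y\to Z\to 0$, obtaining an exact sequence $0\to\Hom(Z,M_k)\to\Hom(Y,M_k)\to\Hom(X,M_k)$, and therefore $h_k(Z)\le h_k(Y)$ for every $k$. Then I would conclude by a forward induction on $k$: if $a_k^Z=a_k^Y$ for all $k$ then $Z\cong Y$ and there is nothing to prove; otherwise let $k_0$ be minimal with $a_{k_0}^Z\ne a_{k_0}^Y$, so that $a_i^Z=a_i^Y$ for all $i<k_0$; combining this with $h_{k_0}(Z)\le h_{k_0}(Y)$ and the triangularity above yields $a_{k_0}^Z\le a_{k_0}^Y$, hence $a_{k_0}^Z<a_{k_0}^Y$, which is exactly $Z<_{\mathfrak{e}}Y$. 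In all cases $Z\le_{\mathfrak{e}}Y$.

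The argument is essentially routine, and no use of $\Ext$-vanishing or representation-directedness is actually needed here. The one point that needs care is to align the direction of the enumeration condition with the variance of $\Hom(-,M_k)$: one must check that it is the $\Hom$-into-$M_k$ functor (not the $\Hom$-out-of functor) that makes the linear system for the $h_k(M)$ lower-triangular in $k$, and simultaneously the one whose left-exactness gives the inequality $h_k(Z)\le h_k(Y)$ in the direction compatible with $\leq_{\mathfrak{e}}$.
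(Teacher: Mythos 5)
Your proof is correct and takes essentially the same route as the paper's: both apply the contravariant left-exact functor $\Hom(-,M_k)$ to the short exact sequence and use the Hom-vanishing $\Hom(M_i,M_k)=0$ for $i>k$ built into the enumeration to extract the multiplicities $a_k$ one step at a time. Your version merely makes the lower-triangular detection of $(a_k^M)$ from $(h_k(M))$ explicit, and rightly notes that only the $\Hom$-vanishing half of the enumeration condition is used.
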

\begin{proof}
We may write $X=M_1^{a_1^X}\oplus\cdots\oplus M_\nu^{a_\nu^X}, Y=M_1^{a_1^Y}\oplus\cdots\oplus M_\nu^{a_\nu^Y}$ and $Z=M_1^{a_1^Z}\oplus\cdots\oplus M_\nu^{a_\nu^Z}$. Set $Y_k:=M_1^{a_1^Y}\oplus \cdots\oplus M_k^{a_k^Y}$ and $Z_k:=M_1^{a_1^Z}\oplus\cdots\oplus M_k^{a_k^Z}$ for any $1\leq k\leq \nu$.

Applying $\Hom(-, M_1) $ to the  short exact sequence, we obtain the following exact sequence
\[0\to \Hom(M_1^{a_1^Z}, M_1)\to \Hom(M_1^{a_1^Y}, M_1)\to \Hom(M_1^{a_1^X}, M_1)\to \cdots,
\]
which implies that $a_1^Z\leq a_1^Y$. If $a_1^Z<a_1^Y$, we clearly have $Z\leq_{\mathfrak{e}} Y$ by the definition of $\leq_{\mathfrak{e}}$.  Now suppose that $a_1^Z=a_1^Y$ and apply the functor $\Hom(-, M_2)$ to the short exact sequence, we have
\[0\to \Hom(Z_2, M_2)\to \Hom(Y_2, M_2)\to \Hom(X, M_2)\to \cdots,
\]
from which we deduce that $a_2^Z\leq a_2^Y$. Repeating the above discussion yields the desired result.
\end{proof}

\subsection{Words arising from directed partitions}~\label{s:directedpartition}
A {\it directed partition $\mathcal{D}_*$ } of $\ind (Q, {\bf d})$ is a partition of the set $\ind (Q, {\bf d})$ into subsets $\mathcal{D}_1,\cdots, \mathcal{D}_s$ such that
\begin{itemize}
\item[(1)] $\Ext^1(U,V)=0$ for all $U,V$ in the same part $\mathcal{D}_k$;
\item[(2)] $\Hom(V,U)=0=\Ext^1(U,V)$ if $U\in \mathcal{D}_k, V\in\mathcal{D}_l$ and $1\leq k<l\leq s$.
\end{itemize}

The notion of directed partition was introduced in~\cite{Reineke} to construct monomial bases for $U_+$. Each enumeration $\mathfrak{e}$ of $(Q, {\bf d})$ yields a directed partition with $\nu$ subsets. On the other hand, for any given directed partition $\mathcal{D}_*=\mathcal{D}_1\cup\cdots\cup \mathcal{D}_s$, we may obtain an enumeration $\mathfrak{e}_{\mathcal{D}_*}$ (not unique) as follows:
   for each subset $\mathcal{D}_k$, we enumerate objects in $\mathcal{D}_k$ as $M_{k1},\cdots, M_{kt_k}, t_k\in \N$  such that $\Hom(M_{ki}, M_{kj})=0$ for $1\leq j<i\leq t_k$.
Then $M_{11}, M_{12},\cdots, M_{1t_1}, M_{21},\cdots, M_{s1},\cdots, M_{st_s}$ is an enumeration of $(Q, {\bf d})$ and we denote it by $\mathfrak{e}_{\mathcal{D}_*}$.

Let $\mathcal{D}_*$ be a given directed partition. For each $\mathcal{D}_k$, let $M_{\mathcal{D}_k}$ be the direct sum of one copy of each of the indecomposable objects in $\mathcal{D}_k$. Let $\mathcal{S}_k$ be the subset of $I=Q_0$ consisting of vertices $i$ such that $\Hom(P_i, M_{\mathcal{D}_k})\neq 0$. We define ${\bf w}_k=(i_{1}i_{2}\cdots i_{s_k})\in W$, where $i_{j}\in \mathcal{S}_k,~ 1\leq j\leq s_k,~ i_{1}<i_{2}<\cdots<i_{s_k}$ and $s_k=|\mathcal{S}_k|$. Finally, set ${\bf w}_{\mathcal{D}_*}=({\bf w}_1,\cdots, {\bf w}_k)\in W$ and  ${\bf w}_{\mathcal{D}_*}$ is called the {\it word associated to the directed partition} $\mathcal{D}_*$. Note that the length $l({\bf w}_{\mathcal{D}_*})$ of ${\bf w}_{\mathcal{D}_*}$ always satisfies $l({\bf w}_{\mathcal{D}_*})\geq \nu$.

For any $N\in \add M_{\mathcal{D}_k}$, we have
\[|N|=n_1|S_{i_1}|+n_2|S_{i_2}|+\cdots n_{s_k}|S_{i_{s_k}}|\in \go(Q, {\bf d}),~\text{where $n_j\geq 0$ for $1\leq j\leq s_k$}.
\]
We then define the vector $\underline{{\bf v}}_{\mathcal{D}_*}(N)$ as follows:
\[\underline{{\bf v}}_{\mathcal{D}_*}(N)=(0, \cdots, 0, n_1, n_2,\cdots, n_{s_k},0,\cdots, 0)\in \N^{l({\bf w}_{\mathcal{D}_*})},\]
 where $n_1$ is the $|\mathcal{S}_1|+\cdots+|\mathcal{S}_{k-1}|+1$ component of $\underline{{\bf v}}_{\mathcal{D}_*}(N)$. We call $\underline{{\bf v}}_{\mathcal{D}_*}(N)$ the {\it generated vector of $N$} with respect to the directed partition $\mathcal{D}_*$.

For a word ${\bf w}=(i_1i_2\cdots i_t)$ and $\underline{a}=(a_1a_2\cdots a_t)\in \N^t$, we define
\[\mathcal{E}(\underline{a}, {\bf w})=\{X\in \rep(Q, {\bf d})~|~F^X_{S_{i_1}^{a_1}, \cdots, S_{i_t}^{a_t}}\neq 0\}.
\]
\begin{lemma}~\label{l:order}
Let $N\in \add M_{\mathcal{D}_k}$ and $\underline{{\bf v}}_{\mathcal{D}_*}(N)\in \N^{l({\bf w}_{\mathcal{D}_*})}$ be the  generated vector of $N$ with respect to the directed partition $\mathcal{D}_*$.  Then we have
\begin{itemize}
\item[(1)] $N\in \mathcal{E}(\underline{{\bf v}}_{\mathcal{D}_*}(N), {\bf w}_{\mathcal{D}_*})$;
\item[(2)] For any $L\in \mathcal{E}(\underline{{\bf v}}_{\mathcal{D}_*}(N), {\bf w}_{\mathcal{D}_*})$, $N\leq_{\mathfrak{e}_{\mathcal{D}_*}} L$.
\end{itemize}
\end{lemma}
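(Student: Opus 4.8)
The plan is to prove the two statements by exploiting the order-theoretic lemmas already established, together with the structure of a directed partition. Recall that $N \in \add M_{\mathcal{D}_k}$ decomposes as a direct sum of indecomposables in $\mathcal{D}_k$, and by condition (1) of a directed partition, $\Ext^1(U,V) = 0$ for all $U,V$ in $\mathcal{D}_k$, so $\Ext^1(N,N) = 0$. Write $|N| = \sum_{j=1}^{s_k} n_j |S_{i_j}|$ with the $i_j \in \mathcal{S}_k$ ordered increasingly. For part (1), I would show directly that there is a filtration of $N$ with $\sum_j n_j$ steps whose successive quotients are, in order, $n_1$ copies of $S_{i_1}$, then $n_2$ copies of $S_{i_2}$, and so on; this exhibits $F^N_{S_{i_1}^{n_1}, \cdots, S_{i_{s_k}}^{n_{s_k}}} \neq 0$, i.e. $N \in \mathcal{E}(\underline{{\bf v}}_{\mathcal{D}_*}(N), {\bf w}_{\mathcal{D}_*})$. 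The existence of such a filtration follows because $N$, being a module over the hereditary algebra $\Lambda$ with a prescribed dimension vector supported on $\mathcal{S}_k$, admits a composition series that can be arranged so that all composition factors $S_{i_1}$ come first, then all $S_{i_2}$, etc.; here I would use the ordering convention on $Q_0$ (arrows go from smaller to larger vertices) so that placing the "sink-like" simples at the top of the filtration is consistent with submodule structure, or alternatively argue by induction on $\dim N$ using that $N$ has a simple submodule $S_i$ with $i$ maximal among the support, peeling it off and invoking the inductive hypothesis. The zero entries of $\underline{{\bf v}}_{\mathcal{D}_*}(N)$ outside the block for $\mathcal{D}_k$ contribute trivially since $S_j^0 = 0$.

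For part (2), the key point is Lemma~\ref{l:ordersurjective}: the order $\leq_{\mathfrak{e}_{\mathcal{D}_*}}$ is compatible with taking quotients. Let $L \in \mathcal{E}(\underline{{\bf v}}_{\mathcal{D}_*}(N), {\bf w}_{\mathcal{D}_*})$, so $L$ admits a filtration $0 = L_t \subset \cdots \subset L_0 = L$ whose subquotients are the simples $S_{i_j}$ appearing with multiplicities $n_j$ in the prescribed order (with the outer zero multiplicities omitted). In particular $|L| = |N|$, and moreover the first several layers of the filtration — those coming before the block of $\mathcal{D}_k$ — are empty, so the filtration really lives inside the "$\mathcal{D}_k$-part". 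I would then observe that $N$ itself is characterized, among representations with $|N| = |L|$, as the \emph{smallest} one under $\leq_{\mathfrak{e}_{\mathcal{D}_*}}$ that surjects appropriately, and compare $L$ to $N$ by repeatedly applying Lemma~\ref{l:ordersurjective} along the filtration: each quotient step only moves us up (or keeps us fixed) in the order. The subtlety is that Lemma~\ref{l:ordersurjective} directly compares a module to its quotients, so I would instead argue that any $L$ with a filtration by the given simples in the given order must satisfy $N \leq_{\mathfrak{e}_{\mathcal{D}_*}} L$ because $N$ is the (unique up to iso) representation that is "$\Ext^1$-free and built from exactly those simples in that block", and any other $L$ with the same dimension vector either equals $N$ or has strictly more of a lexicographically-earlier indecomposable summand $M_{ki}$; this last point follows by applying $\Hom(-, M_{k1}), \Hom(-, M_{k2}), \dots$ to the canonical surjection $L \twoheadrightarrow$ (top of the $\mathcal{D}_k$-part) and using, as in the proof of Lemma~\ref{l:ordersurjective}, that $\Hom(M_{ki}, M_{kj}) = 0$ for $j < i$ forces $a_{M_{k1}}^L \geq a_{M_{k1}}^N$, etc.

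The main obstacle I anticipate is making precise the passage from "$L$ has a filtration by the prescribed simples in the prescribed order" to "$L$ maps onto a module closely related to $N$ to which Lemma~\ref{l:ordersurjective} applies". The cleanest route is probably: from the filtration of $L$, the top quotient $L/L_1$ of the last $\mathcal{S}_k$-block is $M_{\mathcal{D}_k}$-related, and iterating, one produces a surjection from $L$ onto a module $N'$ with $|N'| = |N|$ which is a semisimple-at-the-top object built from the $\mathcal{D}_k$-block; applying Lemma~\ref{l:ordersurjective} gives $N' \leq_{\mathfrak{e}_{\mathcal{D}_*}} L$, and then a separate argument — using that $\Ext^1$ vanishes within $\mathcal{D}_k$ so that $N$ is the "generic" / largest-quotient object of its dimension vector in the block, or conversely the smallest — identifies $N$ with the minimum of $\leq_{\mathfrak{e}_{\mathcal{D}_*}}$ over all representations with dimension vector $|N|$ supported in the $\mathcal{D}_k$-block and admitting the relevant filtration, yielding $N \leq_{\mathfrak{e}_{\mathcal{D}_*}} N' \leq_{\mathfrak{e}_{\mathcal{D}_*}} L$. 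I would spend most of the write-up nailing down this comparison and checking the boundary bookkeeping with the zero entries of $\underline{{\bf v}}_{\mathcal{D}_*}(N)$ outside the $\mathcal{D}_k$-block.
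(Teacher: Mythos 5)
Your plan for part (1) is essentially the same as the paper's: use the enumeration convention (arrows go from smaller to larger vertices) to produce the standard filtration of $N$ peeling off $S_{i_1}$ first, then $S_{i_2}$, and so on, so that $N\in\mathcal{E}(\underline{{\bf v}}_{\mathcal{D}_*}(N),{\bf w}_{\mathcal{D}_*})$; there is a small confusion in your write-up about whether one peels top quotients or submodules, but the underlying idea is right.

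Part (2), however, has a genuine gap, and the route you chart would not close it. Your central move is to manufacture a surjection from $L$ onto something like $N$ and invoke Lemma~\ref{l:ordersurjective}, plus a ``$N$ is the minimum'' claim. Both halves are problematic. First, there is in general no surjection $L\twoheadrightarrow N$ when $\dimv L=\dimv N$. Take $Q=A_2$ with $1\to 2$, $\mathcal{D}_k=\{P_1\}$, $N=P_1$, and $L=S_1\oplus S_2\in\mathcal{E}(\underline{{\bf v}}_{\mathcal{D}_*}(N),{\bf w}_{\mathcal{D}_*})$: every map $L\to P_1$ factors through the socle $S_2\hookrightarrow P_1$, so none is onto. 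Producing a proper quotient $N'$ of $L$ with $\dimv N'=\dimv N$ forces $N'=L$ and tells you nothing. Second, the minimality claim you propose to invoke is not a separately available fact but is exactly the content of (2), and moreover you restrict it to modules ``supported in the $\mathcal{D}_k$-block,'' whereas an arbitrary $L\in\mathcal{E}(\underline{{\bf v}}_{\mathcal{D}_*}(N),{\bf w}_{\mathcal{D}_*})$ can have indecomposable summands from $\mathcal{D}_l$ with $l>k$ (as in the example, where $S_1\in\mathcal{D}_3$). Finally, your suggestion to compare multiplicities by applying $\Hom(-,M_{k1}),\Hom(-,M_{k2}),\dots$ cannot work on its own: with no morphism between $L$ and $N$ there is no inequality between $\dim\Hom(L,M_{kl})$ and $\dim\Hom(N,M_{kl})$.

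The paper's argument is different and is precisely what fills the hole. Assume for contradiction that $L<_{\mathfrak{e}_{\mathcal{D}_*}}N$. Since $N$ has multiplicity $0$ in $\mathcal{D}_1,\dots,\mathcal{D}_{k-1}$, so does $L$, and within $\mathcal{D}_k$ the first $l-1$ multiplicities of $L$ and $N$ agree while $b_l<a_l$. Write $N_1=M_{kl}^{a_l}\oplus\cdots\oplus M_{kt}^{a_t}$ and $L_1=M_{kl}^{b_l}\oplus\cdots\oplus M_{kt}^{b_t}\oplus L_0$, so $\dimv N_1=\dimv L_1$. Using $\Hom(M_{kj},M_{kl})=0$ for $j>l$, $\Hom(L_0,M_{kl})=0$, and $\Ext^1(N_1,M_{kl})=0$, one computes
\[
\langle\dimv N_1,\dimv M_{kl}\rangle=a_l\dim_K\End(M_{kl})>b_l\dim_K\End(M_{kl})-\dim_K\Ext^1(L_1,M_{kl})=\langle\dimv L_1,\dimv M_{kl}\rangle,
\]
contradicting $\dimv N_1=\dimv L_1$. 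The crucial idea you are missing is to use the Euler form $\langle-,-\rangle$ as an invariant of the dimension vector: it lets you compare $L$ and $N$ without any map between them, and the a priori unknown $\Ext^1(L_1,M_{kl})$ term only strengthens the inequality rather than obstructing it.
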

\begin{proof}
The statement $(1)$ follows from the definition of $\underline{{\bf v}}_{\mathcal{D}_*}(N)$ and $\mathcal{E}(\underline{{\bf v}}_{\mathcal{D}_*}(N), {\bf w}_{\mathcal{D}_*})$.

Let us prove the second statement. Suppose that there is an object $L\in \mathcal{E}(\underline{{\bf v}}_{\mathcal{D}_*}(N), {\bf w}_{\mathcal{D}_*})$ such that $L<_{\mathfrak{e}_{\mathcal{D}_*}}N$. Let $\mathcal{D}_k=\{M_{k1},\cdots, M_{kt}\}$ such that $\Hom(M_{ki}, M_{kj})=0$ for $1\leq j<i\leq t$. We may write $N$ as
\[N=M_{k1}^{a_1}\oplus\cdots\oplus M_{kt}^{a_t},~ a_i\in \N, ~1\leq i\leq t.
\]
Note that $\dimv L=\dimv N$ and  $L<_{\mathfrak{e}_{\mathcal{D}_*}}N$, we may also write $L$ as
\[L=M_{k1}^{b_1}\oplus\cdots\oplus M_{kt}^{b_t}\oplus L_0,
\]
where $L_0\in \add M_{\mathcal{D}_{k+1}}\oplus\cdots\oplus M_{\mathcal{D}_s}$ and there exists $1\leq l\leq t$ such that $a_1=b_1,\cdots, a_{l-1}=b_{l-1}, ~a_l>b_l$.

Now consider $N_1=M_{kl}^{a_l}\oplus\cdots\oplus M_{kt}^{a_t}$ and $L_1=M_{kl}^{b_l}\oplus\cdots\oplus M_{kt}^{b_t}\oplus L_0$, we have $\dimv N_1=\dimv L_1$. In particular, $\langle \dimv N_1, \dimv M_{kl}\rangle=\langle \dimv L_1,\dimv M_{kl}\rangle$.
However,
\begin{eqnarray*}
\langle \dimv N_1, \dimv M_{kl}\rangle&=&\dim_K\Hom(N_1, M_{kl})-\dim_K\Ext^1(N_1,M_{kl})\\
&=&a_l\dim_K\Hom(M_{kl}, M_{kl})\\
&>&b_l\dim_K\Hom(M_{kl}, M_{kl})-\dim_K\Ext^1(L_1, M_{kl})\\
&=&\dim_K\Hom(L_1, M_{kl})-\dim_K\Ext^1(L_1, M_{kl})\\
&=&\langle \dimv L_1, \dimv M_{kl}\rangle,
\end{eqnarray*}
a contradiction. This finishes the proof.
\end{proof}

Following ~\cite{Reineke}, given a representation $M\in \rep(Q, {\bf d})$,  we define its {\it $k$-th part} with respect to the directed partition $\mathcal{D}_*$ as
\[M_{(k)}:=\bigoplus_{U\in \mathcal{D}_k}U^{\mu_U M},
\]
where $\mu_UM$ denotes the multiplicity of the indecomposable $U$ as a direct summand of $M$. By the definition, we have
\begin{itemize}
\item[(1)] $M=\bigoplus\limits_{k=1}^sM_{(k)}$;
\item[(2)] $\Ext^1(M_{(k)},M_{(k)})=0$ for all $k=1,\cdots, s$;
\item[(3)] $\Ext^1(M_{(k)}, M_{(l)})=0=\Hom(M_{(l)}, M_{(k)})$ for all $1\leq k<l\leq s$.
\end{itemize}
Then we associate to $M$ a vector $\underline{{\bf v}}_{\mathcal{D}_*}(M)\in \N^{l({\bf w}_{\mathcal{D}_*})}$ by
\[\underline{{\bf v}}_{\mathcal{D}_*}(M)=\sum_{k=1}^s\underline{{\bf v}}_{\mathcal{D}_*}(M_{(k)}).
\]
We call $\underline{{\bf v}}_{\mathcal{D}_*}(M)$ the {\it generated vector of $M$} with respect to the directed partition $\mathcal{D}_*$. The following result plays  a key role in the study  of the injectivity of Feigin's map.
\begin{proposition}~\label{p:order}
For any $M\in \rep(Q, {\bf d})$, we have
\begin{itemize}
\item[(1)] $M\in \mathcal{E}(\underline{{\bf v}}_{\mathcal{D}_*}(M), {\bf w}_{\mathcal{D}_*})$;
\item[(2)] For any $L\in \mathcal{E}(\underline{{\bf v}}_{\mathcal{D}_*}(M), {\bf w}_{\mathcal{D}_*})$, $M\leq_{\mathfrak{e}_{\mathcal{D}_*}} L$.
\end{itemize}
\end{proposition}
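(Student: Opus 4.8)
The plan is to reduce the general statement to the case already handled in Lemma~\ref{l:order}, using the additivity of both the generated vector and the filtration-counting data over the parts $M_{(k)}$ of the directed partition. First I would write $M = \bigoplus_{k=1}^s M_{(k)}$, and recall that by construction $\underline{{\bf v}}_{\mathcal{D}_*}(M) = \sum_{k=1}^s \underline{{\bf v}}_{\mathcal{D}_*}(M_{(k)})$, where each summand $\underline{{\bf v}}_{\mathcal{D}_*}(M_{(k)})$ is supported in the block of coordinates indexed by $\mathcal{S}_k$; in particular the supports are pairwise disjoint, so $\underline{{\bf v}}_{\mathcal{D}_*}(M)$ is just the concatenation of the blocks $\underline{{\bf v}}_{\mathcal{D}_*}(M_{(k)})$. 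For part (1), since the word ${\bf w}_{\mathcal{D}_*}=({\bf w}_1,\ldots,{\bf w}_s)$ is a concatenation aligned with this block decomposition, a filtration of $M$ realizing $F^M_{S_{i_1}^{a_1},\cdots,S_{i_t}^{a_t}}\neq 0$ for $\underline{a}=\underline{{\bf v}}_{\mathcal{D}_*}(M)$ can be built by splicing together the filtrations coming from Lemma~\ref{l:order}(1) applied to each $M_{(k)}\in\add M_{\mathcal{D}_k}$, using the vanishing $\Ext^1(M_{(k)},M_{(l)})=0$ for $k<l$ (property (3)) to guarantee that the block-wise filtrations assemble into a genuine filtration of $M$ with the prescribed subquotients. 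So the Hall number is nonzero, proving (1).

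For part (2), let $L\in\mathcal{E}(\underline{{\bf v}}_{\mathcal{D}_*}(M),{\bf w}_{\mathcal{D}_*})$, so $L$ admits a filtration with subquotients the simples $S_{i_j}$ with multiplicities prescribed by the concatenated vector. The key observation is that such a filtration of $L$ can be grouped according to the blocks ${\bf w}_k$: the ``middle chunk'' of the filtration corresponding to block $k$ exhibits a subquotient $L^{(k)}$ of $L$ with $|L^{(k)}| = |M_{(k)}|\in\go(Q,{\bf d})$, where $L^{(k)}$ is filtered by the simples $S_i$, $i\in\mathcal{S}_k$, with the right multiplicities — i.e. $L^{(k)}\in\mathcal{E}(\underline{{\bf v}}_{\mathcal{D}_*}(M_{(k)}),{\bf w}_k)$ (possibly $L^{(k)}$ has summands outside $\add M_{\mathcal{D}_k}$, which is exactly the situation allowed in the proof of Lemma~\ref{l:order}). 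Applying Lemma~\ref{l:order}(2) to each block gives $M_{(k)}\leq_{\mathfrak{e}_{\mathcal{D}_*}} L^{(k)}$, and then Lemma~\ref{l:ordersurjective} (compatibility of $\leq_{\mathfrak{e}}$ with subquotients, noting each $L^{(k)}$ is a subquotient of $L$) together with the way $\mathfrak{e}_{\mathcal{D}_*}$ orders blocks should let me assemble these block-wise inequalities into $M\leq_{\mathfrak{e}_{\mathcal{D}_*}} L$. Concretely, I would compare $M$ and $L$ in the order $\leq_{\mathfrak{e}_{\mathcal{D}_*}}$ block by block: the first block where the multiplicity vectors of $M$ and $L$ (in the enumeration $\mathfrak{e}_{\mathcal{D}_*}$) differ must satisfy the strict inequality in the right direction, by the block-wise comparison and the fact that earlier blocks force a $\dimv$-constraint.

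The main obstacle I anticipate is the bookkeeping in part (2): extracting from an arbitrary filtration of $L$ the ``block subquotients'' $L^{(k)}$ with the correct dimension vectors and correct simple-filtration multiplicities, and checking that once the multiplicities of $M$ and $L$ agree on the first $k-1$ blocks (in the $\mathfrak{e}_{\mathcal{D}_*}$-enumeration), the dimension vectors $\dimv L^{(k)}$ and $\dimv M_{(k)}$ must agree, so that Lemma~\ref{l:order}(2) applies verbatim to the $k$-th block. This is precisely the inductive mechanism already present inside the proof of Lemma~\ref{l:order}, now run one level up across the parts rather than across the indecomposables within a part; I expect it to go through by the same $\Hom$/$\Ext$-dimension counting argument, invoking properties (2) and (3) of the parts $M_{(k)}$ to kill the cross terms. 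A minor point to be careful about: the enumeration $\mathfrak{e}_{\mathcal{D}_*}$ is not unique, but the statement only asserts the inequality for this fixed choice, so I only need the block structure of $\mathfrak{e}_{\mathcal{D}_*}$ and the internal ordering within each $\mathcal{D}_k$ (which is exactly what Lemma~\ref{l:order} uses).
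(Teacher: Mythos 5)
Your outline of part (1) is correct and matches the paper: build a filtration of $M$ by splicing the filtrations of the parts $M_{(k)}$, using the Ext-vanishings among the $\mathcal{D}_k$'s. That is exactly what the paper does.

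For part (2), however, there is a genuine gap in the assembly step, and the hints you give for closing it do not work. You correctly observe that a filtration of $L$ realizing the Hall number produces subquotients $L^{(k)}\in\mathcal{E}(\underline{{\bf v}}_{\mathcal{D}_*}(M_{(k)}),{\bf w}_{\mathcal{D}_*})$, and that Lemma~\ref{l:order}(2) then gives $M_{(k)}\leq_{\mathfrak{e}_{\mathcal{D}_*}}L^{(k)}$. But $L^{(k)}$ is only a \emph{subquotient} of $L$: its direct-summand multiplicity vector need not have any a priori relation to $L$'s, so you cannot simply read off the block-$k$ coordinates of $L$ from $L^{(k)}$. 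Two issues with your proposed repairs. First, Lemma~\ref{l:ordersurjective} asserts $Z\leq_{\mathfrak{e}}Y$ when $Z$ is a \emph{quotient} of $Y$; it is not a statement about arbitrary subquotients, and for a subquotient the one-directional inequality you would want does not follow from it. Second, the idea of running ``the same $\Hom/\Ext$-dimension count one level up'' breaks down across blocks: for $U\in\mathcal{D}_k$ and $V\in\mathcal{D}_l$ with $k<l$, the directed-partition axioms give $\Ext^1(U,V)=0$ and $\Hom(V,U)=0$, but they say nothing about $\Ext^1(V,U)$, and it is precisely this Ext group that shows up when you try to pair $L_1=M_r^{c_r}\oplus\cdots\oplus M_\nu^{c_\nu}$ against $\dimv M_r$ with $M_r$ in block $k$. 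The Euler-form argument in Lemma~\ref{l:order} works only because everything there apart from a tail $L_0$ lives inside a single block, where $\Ext^1$ genuinely vanishes.

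What the paper does at this point is crucially different. Taking $k$ minimal with $L_k\not\cong M_{(k)}$, one splits off the top of the filtration to get a quotient $X=M_{(1)}\oplus\cdots\oplus M_{(k-1)}$ of $L$ and a kernel $Z$ which in turn has $L_k$ as a quotient. Lemma~\ref{l:ordersurjective} applied to $L\twoheadrightarrow X$ gives $X\leq_{\mathfrak{e}_{\mathcal{D}_*}}L$, and one case-splits on the first $t=|\mathcal{D}_1|+\cdots+|\mathcal{D}_{k-1}|$ coordinates: if they are strictly larger for $L$, then $M<_{\mathfrak{e}_{\mathcal{D}_*}}L$ immediately; if they agree, then $\Hom(L',X)=0$ (by axiom (2) across blocks) forces the short exact sequence $0\to Z\to L\to X\to 0$ to \emph{split}, so $L\cong Z\oplus X$ and the multiplicities of $L$ in blocks $\geq k$ are exactly those of $Z$. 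Only after this splitting can one invoke $M_{(k)}<_{\mathfrak{e}_{\mathcal{D}_*}}L_k\leq_{\mathfrak{e}_{\mathcal{D}_*}}Z$ (the second inequality again via Lemma~\ref{l:ordersurjective} on $Z\twoheadrightarrow L_k$) and translate it into $M<_{\mathfrak{e}_{\mathcal{D}_*}}L$. This splitting argument is the missing ingredient in your plan, and ``earlier blocks force a $\dimv$-constraint'' is not a substitute for it.
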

\begin{proof}
First we note that
\[\mathcal{E}(\underline{{\bf v}}_{\mathcal{D}_*}(M), {\bf w}_{\mathcal{D}_*})=\{X\in \rep(Q, {\bf d})~|~F^X_{X_1,\cdots, X_s}\neq 0~\text{where}~X_k\in \mathcal{E}(\underline{{\bf v}}_{\mathcal{D}_*}(M_{(k)}), {\bf w}_{\mathcal{D}_*})\}.
\]
By Lemma~\ref{l:order}, we have $M_{(k)}\in \mathcal{E}(\underline{{\bf v}}_{\mathcal{D}_*}(M_{(k)}), {\bf w}_{\mathcal{D}_*})$. On the other hand, we clearly have $F^X_{M_{(1)}, \cdots, M_{(s)}}\neq 0$ if and only if $X=M_{(1)}\oplus\cdots\oplus M_{(k)}=M$. Moreover, in this case, we have $F^M_{M_{(1)}, \cdots, M_{(s)}}=1$. In particular, $M\in \mathcal{E}(\underline{{\bf v}}_{\mathcal{D}_*}(M), {\bf w}_{\mathcal{D}_*})$. This proves the statement $(1)$.

To prove statement $(2)$, let $L\in \mathcal{E}(\underline{{\bf v}}_{\mathcal{D}_*}(M), {\bf w}_{\mathcal{D}_*})$, there exists $L_i\in \mathcal{E}(\underline{{\bf v}}_{\mathcal{D}_*}(M_{(i)}), {\bf w}_{\mathcal{D}_*})$ for $1\leq i\leq s$ such that $F^L_{L_1,\cdots, L_s}\neq 0$. Note that if $L_1=M_{(1)},\cdots, L_s=M_{(s)}$, then $L\cong M$. Thus to prove the second statement, it suffices to prove that if there is an $1\leq i\leq s$ such that $L_i\not\cong M_{(i)}$, then $M<_{\mathfrak{e}_{\mathcal{D}_*}}L$.
In this case, we may assume that
\[L_1=M_{(1)}, \cdots, L_{k-1}=M_{(k-1)}, M_{(k)}<_{\mathfrak{e}_{\mathcal{D}_*}}L_k.
\]
Set $\underline{b}=\underline{{\bf v}}_{\mathcal{D}_*}(M_{k+1})+\cdots +\underline{{\bf v}}_{\mathcal{D}_*}(M_{s})$ and $X=M_{(1)}\oplus\cdots\oplus M_{(k-1)}$.
By the definition of $L$, we have  short exact sequences
\[0\to Y\to Z\to L_k\to 0~\text{and}~0\to Z\to L\to X\to 0,
\]
for some $Y\in \mathcal{E}(\underline{b}, {\bf w}_{\mathcal{D}_*})$.

Let $M_1,M_2,\cdots, M_\nu$ be the enumeration of $\ind(Q, {\bf d})$ associated to $\mathfrak{e}_{\mathcal{D}_*}$. We may write
\[X=M_1^{b_1}\oplus\cdots\oplus M_t^{b_t}~\text{and }~L=M_1^{a_1}\oplus\cdots\oplus M_\nu^{a_{\nu}},
\]
where $t=|\mathcal{D}_1|+\cdots+|\mathcal{D}_{k-1}|$.
By Lemma~\ref{l:ordersurjective}, we have $X\leq_{\mathfrak{e}_{\mathcal{D}_*}}L $ which implies that either $(i)$ there exists $1\leq l\leq t$ such that $a_1=b_1,\cdots, a_{l-1}=b_{l-1}$ and $a_l>b_l$ or $(ii)~ a_1=b_1,\cdots, a_t=b_t$. In case $(i)$, we clearly have $M<_{\mathfrak{e}_{\mathcal{D}_*}}L$. Let us consider the case $(ii)$. Denote by $L':=M_{t+1}^{a_{t+1}}\oplus\cdots\oplus M_\nu^{a_{\nu}}$, we have $\Hom(L', X)=0$, which implies that the short exact sequence $0\to Z\to L\to X\to 0$ is split and $L\cong Z\oplus X$.
Again by Lemma~\ref{l:ordersurjective}, we know that $M_{(k)}<_{\mathfrak{e}_{\mathcal{D}_*}}L_k\leq_{\mathfrak{e}_{\mathcal{D}_*}}Z$ and we deduce that $M<_{\mathfrak{e}_{\mathcal{D}_*}}L$.
\end{proof}

\begin{theorem}~\label{t:injective-directed-partition}
Let $\mathcal{D}_*$ be a directed partition and ${\bf w}_{\mathcal{D}_*}$ the associated word. The Feigin's map $\mathbf{F}_{{\bf w}_{\mathcal{D}_*}}:U_+\to \mathbb{P}_{{\bf w}_{\mathcal{D}_*}}$ is injective.
\end{theorem}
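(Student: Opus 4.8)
The plan is to deduce the theorem from the representation-theoretic Proposition~\ref{p:order} after transporting $\mathbf{F}_{{\bf w}_{\mathcal{D}_*}}$ to the dual Ringel--Hall algebra. Since $C_{(Q,{\bf d})}$ is of finite type, the injective algebra homomorphism $\Psi\colon U_+\to\mathcal{H}^*(Q)$, $E_j\mapsto\delta_{[S_j]}$, is an isomorphism (Theorem~\ref{t:Hallalgebra} together with the self-duality $\mathcal{H}(Q)\cong\mathcal{H}^*(Q)$), and by the Remark following the proof of Theorem~\ref{t:Feigin-type maps} one has $\mathbf{F}_{{\bf w}_{\mathcal{D}_*}}=\int_{{\bf w}_{\mathcal{D}_*}}\circ\,\Psi$. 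Hence it suffices to prove that $\int_{\bf w}$ is injective. Throughout I abbreviate ${\bf w}:={\bf w}_{\mathcal{D}_*}$, write ${\bf w}=(i_1i_2\cdots i_m)$ with $m=l({\bf w})$, and put $\mathfrak{e}:=\mathfrak{e}_{\mathcal{D}_*}$.

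Let $x=\sum_{[M]}c_{[M]}\,\delta_{[M]}\in\mathcal{H}^*(Q)$ be a nonzero element, i.e. a finite linear combination with not all $c_{[M]}$ equal to zero. Since $\leq_{\mathfrak{e}}$ is a total order on the isoclasses of $\rep(Q,{\bf d})$, the finite nonempty set $\mathcal{S}:=\{[M]:c_{[M]}\neq 0\}$ has a $\leq_{\mathfrak{e}}$-largest element $M_0$. Put $\underline{a}_0:=\underline{{\bf v}}_{\mathcal{D}_*}(M_0)\in\N^m$ and write $\underline{a}_0=(a_1,\dots,a_m)$. I examine the coefficient of the basis monomial $t^{\underline{a}_0}$ of $\mathbb{P}_{\bf w}$ in $\int_{\bf w}(x)$. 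By the explicit formula of Theorem~\ref{t:Feigin-type maps}(2), and because the $v$-power prefactor occurring there depends only on the exponent vector and on ${\bf w}$, this coefficient equals $\lambda\sum_{[M]}c_{[M]}\,F^{M}_{S_{i_1}^{a_1},\dots,S_{i_m}^{a_m}}$, where $\lambda\in\Q(v)$ is a nonzero scalar (a power of $v$ determined by $\underline{a}_0$ and ${\bf w}$ alone).

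A term of this sum indexed by $[M]$ is nonzero only when $c_{[M]}\neq 0$, i.e. $[M]\in\mathcal{S}$, and $F^{M}_{S_{i_1}^{a_1},\dots,S_{i_m}^{a_m}}\neq 0$, i.e. $M\in\mathcal{E}(\underline{a}_0,{\bf w})=\mathcal{E}(\underline{{\bf v}}_{\mathcal{D}_*}(M_0),{\bf w})$. The second condition forces $M_0\leq_{\mathfrak{e}}M$ by Proposition~\ref{p:order}(2), while the first forces $M\leq_{\mathfrak{e}}M_0$ by the maximality of $M_0$ in $\mathcal{S}$; as $\leq_{\mathfrak{e}}$ is a total order this yields $M\cong M_0$. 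Conversely $M_0$ itself does contribute: $[M_0]\in\mathcal{S}$, and $M_0\in\mathcal{E}(\underline{a}_0,{\bf w})$ by Proposition~\ref{p:order}(1), so that $F^{M_0}_{S_{i_1}^{a_1},\dots,S_{i_m}^{a_m}}\neq 0$. Therefore the coefficient of $t^{\underline{a}_0}$ in $\int_{\bf w}(x)$ equals $\lambda\,c_{[M_0]}\,F^{M_0}_{S_{i_1}^{a_1},\dots,S_{i_m}^{a_m}}\neq 0$, so $\int_{\bf w}(x)\neq 0$; hence $\int_{\bf w}$, and therefore $\mathbf{F}_{{\bf w}_{\mathcal{D}_*}}$, is injective.

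The real content has already been isolated in Proposition~\ref{p:order} (which itself rests on Lemmas~\ref{l:ordersurjective}--\ref{l:order}): that $M$ occurs with nonzero Hall number against its own generated vector $\underline{{\bf v}}_{\mathcal{D}_*}(M)$, and that $M$ is the $\leq_{\mathfrak{e}}$-minimal representation with this property. Granting that, what remains is purely formal --- the passage to $\int_{\bf w}$ via the self-duality of the Hall algebra and the factorization $\mathbf{F_w}=\int_{\bf w}\circ\Psi$, together with the observation that the $v$-power prefactor in $\int_{\bf w}(\delta_{[M]})$ is the same for every $M$, so that reading off a single monomial cannot produce a cancellation at $t^{\underline{a}_0}$. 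I therefore expect no serious obstacle in this last step beyond the bookkeeping above.
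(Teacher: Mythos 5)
Your proposal is correct and is essentially the paper's proof seen from the dual side: the paper reduces to showing that the adjoint map $\Phi\circ\mathbf{T}_{{\bf w}_{\mathcal{D}_*}}\colon\mathbb{P}_{{\bf w}_{\mathcal{D}_*}}^*\to\mathcal{H}(Q)$ is surjective, establishing from Proposition~\ref{p:order} the unitriangular expansion $\Phi\circ \mathbf{T}_{{\bf w}_{\mathcal{D}_*}}(t_{\underline{{\bf v}}_{\mathcal{D}_*}(M)})=h(\underline{{\bf v}}_{\mathcal{D}_*}(M))\big([M]+\sum_{M<_{\mathfrak{e}_{\mathcal{D}_*}}L}F^L_{\dots}[L]\big)$ and then inducting on $\leq_{\mathfrak{e}_{\mathcal{D}_*}}$, whereas you argue injectivity of $\int_{{\bf w}_{\mathcal{D}_*}}$ directly by extracting the leading coefficient at the $\leq_{\mathfrak{e}_{\mathcal{D}_*}}$-largest element of the support. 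Since $\int_{\bf w}$ is the adjoint of $\Phi\circ\mathbf{T}_{\bf w}$, these are literally the same matrix-coefficient computation, and both rest on Proposition~\ref{p:order} in an identical way.
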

\begin{proof}
Let ${\bf w}_{\mathcal{D}_*}=(i_1i_2\cdots i_\mu)$ with $\mu=l({\bf w}_{\mathcal{D}_*})$.
By Theorem~\ref{t:Hallalgebra} $(1)$, it is equivalent to prove that $\int_{{\bf w}_{\mathcal{D}_*}}:\mathcal{H}^*(Q)\to \mathbb{P}_{{\bf w}_{\mathcal{D}_*}}$ is injective. Consider the adjoint of $\int_{{\bf w}_{\mathcal{D}_*}}$ ({\it cf.}~Section~\ref{ss:Feigin-type-maps}), which is
\begin{eqnarray*}
\Phi\circ \mathbf{T}_{{\bf w}_{\mathcal{D}_*}}:&\mathbb{P}_{{\bf w}_{\mathcal{D}_*}}^*&\to \mathcal{H}(Q)\\
&t_{\underline{a}}&\mapsto z_{{\bf w}_{\mathcal{D}_*}}(\underline{a}) [S_{i_1}]^{*a_1}*[S_{i_2}]^{*a_2}*\cdots*[S_{i_\mu}]^{*a_\mu},
\end{eqnarray*}
where $[S_{i_k}]^{*a_k}$ is the product of $a_k$ copies of $[S_{i_k}]$.

It suffices to show that $\Phi\circ \mathbf{T}_{{\bf w}_{\mathcal{D}_*}}$ is surjective. Note that $\{[M]~|~M\in \rep(Q, {\bf d})\}$ is a basis of $\mathcal{H}(Q)$, we are going to show that each $[M]$ belongs to the image of $\Phi\circ \mathbf{T}_{{\bf w}_{\mathcal{D}_*}}$.

Let $\underline{{\bf v}}_{\mathcal{D}_*}(M)=(a_1,\cdots, a_\mu)$ be the generated vector of $M$ with respect to $\mathcal{D}_*$, set
\[h_{{\bf w}_{\mathcal{D}_*}}(\underline{{\bf v}}_{\mathcal{D}_*}(M))=\prod_{k=1}^\mu~v_{i_k}^{\frac{a_k(a_k-1)}{2}}v^{\sum_{k<l}a_ka_l\langle \alpha_{i_k}, \alpha_{i_l}\rangle}.
\]
Proposition~\ref{p:order} and a direct calculation shows
\[\Phi\circ \mathbf{T}_{{\bf w}_{\mathcal{D}_*}}(t_{\underline{{\bf v}}_{\mathcal{D}_*}(M)})=h_{{\bf w}_{\mathcal{D}_*}}(\underline{{\bf v}}_{\mathcal{D}_*}(M))\left([M]+\sum_{M<_{\mathfrak{e}_{\mathcal{D}_*}}L}F^L_{S_{i_1}^{a_1},\cdots, S_{i_\mu}^{a_{\mu}}}[L]\right).
\]
Note that there are only finitely many $L$ such that $\dimv L=\dimv M$ and $M<_{\mathfrak{e}_{\mathcal{D}_*}}L$. Now induction on the order $\leq_{\mathfrak{e}_{\mathcal{D}_*}}$ implies that $[M]$ is in the image of $\Phi\circ\mathbf{T}_{{\bf w}_{\mathcal{D}_*}}$. This concludes the proof.

\end{proof}

We remark that in general the word ${\bf w}_{\mathcal{D}_*}$ does not belong to $\mathscr{X}$. In order to obtain ${\bf w}_{\mathcal{D}_*}\in \mathscr{X}$, one should consider the {\it regular directed partition} introduced in~\cite{Reineke}. Such a construction does generalize to the non-simply-laced cases. Therefore applying Theorem~\ref{t:injective-directed-partition} to a regular directed partition $\mathcal{D}_*$, we get the injectivity of $\mathbf{F}_{{\bf w}_{\mathcal{D}_*}}$ for the word ${\bf w}_{\mathcal{D}_*}$ of a reduced expression of $\omega_0$. We point out here that the proof of ${\bf w}_{\mathcal{D}_*}\in \mathscr{X}$ depends on Lusztig's work of canonical bases~\cite{Lusztig94}.
 On the other hand, for a directed partition $\mathcal{D}_*$, we may construct an enumeration $\mathfrak{e}_{\mathcal{D}_*}$ associated to $\mathcal{D}_*$ and the enumeration $\mathfrak{e}_{\mathcal{D}_*}$ yields a  word ${\bf w}_{\mathfrak{e}_{\mathcal{D}_*}}\in \mathscr{X}$ which is adapted to the valued quiver $(Q, {\bf d})$.
 Moreover, any words adpated to  vauled quivers can be constructed in this way.
 Thus, in the following, we will mainly restrict ourselves to the words adapted to valued quivers.

\subsection{Feigin's map for adapted words of  reduced expressions of  $\omega_{0}$}
Recall that $C\in M_{I\times I}(\Z)$ is a Cartan matrix of finite type and $W(C)$ is the associated Weyl group of $C$. Denote by $\omega_0$ the longest element of $W(C)$.
Let $U_+$ be the quantum enveloping algebra associated to $C$.

The following result is known to hold for any reduced expressions of $\omega_0$, which was conjectured by B. Feigin and proven by K. Iohara and F. Malikov in a special case~\cite{IM} and by A. Joseph in general cases~\cite{Joseph} ({\it cf.} also~\cite{Berenstein}). In ~\cite{Reineke}, M. Reineke also obtained a proof  by using the monomial bases constructed in ~\cite{Reineke} for simply-laced cases. Since we  pursue a representation-theoretic interpretation of the injectivity, we consider the reduced expressions which are adapted to valued quivers. The rest of this section is devoted to give a representation-theoretic proof for the following result.
\begin{theorem}~\label{t:injective}
Let ${\bf w}\in \mathscr{X}$ which is adapted to a valued quiver. Then the  Feigin's map $\mathbf{F}_{\bf w}:U_+\to \mathbb{P}_{\bf w}$ is injective.
\end{theorem}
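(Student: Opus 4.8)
By hypothesis ${\bf w}={\bf w}_{\mathfrak{e}}$ for some enumeration $\mathfrak{e}$ of a valued quiver $(Q,{\bf d})$ with $C_{(Q,{\bf d})}=C$. The plan is to run, for this adapted word, the argument that proves Theorem~\ref{t:injective-directed-partition}. Since $C$ is of finite type, the embedding $U_+\hookrightarrow\mathcal{H}^*(Q)$ is an isomorphism (Theorem~\ref{t:Hallalgebra}), and $\mathbf{F}_{\bf w}=\int_{\bf w}\circ\Psi$ (the Remark following the proof of Theorem~\ref{t:Feigin-type maps}); hence it is equivalent to prove that $\int_{\bf w}\colon\mathcal{H}^*(Q)\to\mathbb{P}_{\bf w}$ is injective. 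By the proof of Theorem~\ref{t:Feigin-type maps}, $\int_{\bf w}=(\Phi\circ\mathbf{T}_{\bf w})^*$ is the adjoint of $\Phi\circ\mathbf{T}_{\bf w}\colon\mathbb{P}_{\bf w}^*\to\mathcal{H}(Q)$; since all graded components are finite-dimensional, $\int_{\bf w}$ is injective if and only if $\Phi\circ\mathbf{T}_{\bf w}$ is surjective. As $\Phi\circ\mathbf{T}_{\bf w}(t_{\underline{a}})=z_{\bf w}(\underline{a})\,[S_{i_1}]^{*a_1}*\cdots*[S_{i_\nu}]^{*a_\nu}$ with $z_{\bf w}(\underline{a})\neq0$, and $\{[M]\mid M\in\rep(Q,{\bf d})\}$ is a $\Q(v)$-basis of $\mathcal{H}(Q)$, it suffices to place every $[M]$ in the image of $\Phi\circ\mathbf{T}_{\bf w}$.

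The heart of the proof is an analogue of Proposition~\ref{p:order} for the adapted word ${\bf w}_{\mathfrak{e}}$, with the enumeration $\mathfrak{e}$ playing the role of $\mathfrak{e}_{\mathcal{D}_*}$. I would attach to each $M\in\rep(Q,{\bf d})$ a generated vector $\underline{{\bf v}}_{\mathfrak{e}}(M)\in\N^\nu$, additive on direct sums, so that on an indecomposable summand $U=\tau^{k_U}I_{i_U}$ the vector $\underline{{\bf v}}_{\mathfrak{e}}(U)$ records a filtration of $U$ by simples read off compatibly with the letters of ${\bf w}_{\mathfrak{e}}$ — mimicking the definition of $\underline{{\bf v}}_{\mathcal{D}_*}$ on the objects of $\add M_{\mathcal{D}_k}$ — and then prove: (1) $M\in\mathcal{E}(\underline{{\bf v}}_{\mathfrak{e}}(M),{\bf w}_{\mathfrak{e}})$; (2) $M\leq_{\mathfrak{e}}L$ for every $L\in\mathcal{E}(\underline{{\bf v}}_{\mathfrak{e}}(M),{\bf w}_{\mathfrak{e}})$. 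Part (1) is obtained by assembling the chosen filtrations of the direct summands of $M$, exactly as in Lemma~\ref{l:order}(1) and the first half of Proposition~\ref{p:order}. Part (2) is the crucial point; here I would use the homological properties of the representation-directed valued quiver $(Q,{\bf d})$ listed after Lemma~\ref{l:reducedexpression} — the $\tau$-orbit description of $\ind(Q,{\bf d})$ through $\theta_\tau$, the vanishing $\Hom(\mathfrak{e}(i),\mathfrak{e}(j))=\Ext^1(\mathfrak{e}(j),\mathfrak{e}(i))=0$ for $j<i$ defining $\mathfrak{e}$, and $\Hom(\tau^{-l}P,\tau^{-k}P)=0$ for $k<l$ — together with the Euler-form comparison and Lemma~\ref{l:ordersurjective} already exploited at the end of the proof of Proposition~\ref{p:order}, in order to force any module strictly smaller than $M$ in $\leq_{\mathfrak{e}}$ out of $\mathcal{E}(\underline{{\bf v}}_{\mathfrak{e}}(M),{\bf w}_{\mathfrak{e}})$.

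Granting this, writing $\underline{{\bf v}}_{\mathfrak{e}}(M)=(a_1,\cdots, a_\nu)$, the computation in the proof of Theorem~\ref{t:injective-directed-partition} carries over verbatim and gives
\[\Phi\circ\mathbf{T}_{\bf w}\bigl(t_{\underline{{\bf v}}_{\mathfrak{e}}(M)}\bigr)=h_{\bf w}\bigl(\underline{{\bf v}}_{\mathfrak{e}}(M)\bigr)\Bigl([M]+\sum_{M<_{\mathfrak{e}}L}F^L_{S_{i_1}^{a_1},\cdots, S_{i_\nu}^{a_\nu}}[L]\Bigr),\]
where $h_{\bf w}(\underline{{\bf v}}_{\mathfrak{e}}(M))\neq0$ and the sum runs over the finitely many $L$ with $\dimv L=\dimv M$ and $M<_{\mathfrak{e}}L$. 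Downward induction on $\leq_{\mathfrak{e}}$, which is a finite total order on each fixed dimension vector, then puts every $[M]$ in $\im(\Phi\circ\mathbf{T}_{\bf w})$, so $\Phi\circ\mathbf{T}_{\bf w}$ is surjective and $\mathbf{F}_{\bf w}$ is injective. I expect the main obstacle to be the construction of $\underline{{\bf v}}_{\mathfrak{e}}$ on indecomposables together with the minimality statement~(2): one must show that the subword of ${\bf w}_{\mathfrak{e}}$ singled out by an indecomposable $U$ realises exactly $\dimv U$ in a way compatible with the Auslander-Reiten structure, and that every competing filtered module is forced upward in $\leq_{\mathfrak{e}}$. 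An alternative route would be to identify ${\bf w}_{\mathfrak{e}}$ with the word ${\bf w}_{\mathcal{D}_*}$ of a directed partition $\mathcal{D}_*$ obtained by grouping $\mathfrak{e}(1),\cdots,\mathfrak{e}(\nu)$ into consecutive support-compatible blocks and then to apply Theorem~\ref{t:injective-directed-partition} directly, the difficulty being shifted to proving that such a grouping is a directed partition whose associated word is ${\bf w}_{\mathfrak{e}}$.
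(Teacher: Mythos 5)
Your reduction to the surjectivity of $\Phi\circ\mathbf{T}_{\bf w}$ is correct and matches the paper, and you rightly identify that the crux is a Proposition~\ref{p:order}--type minimality statement. But that is precisely where the proposal stops short: neither the construction of $\underline{{\bf v}}_{\mathfrak{e}}(M)$ on indecomposables nor the proof of statements~(1) and~(2) is supplied, and you acknowledge yourself that this is ``the main obstacle.'' What is missing is the paper's key structural move: it does \emph{not} attempt to prove a Proposition~\ref{p:order} analogue for an arbitrary adapted word. Instead it shows that if ${\bf w}_1$ and ${\bf w}_2$ differ by a short braid relation then $\Phi\circ\mathbf{T}_{{\bf w}_1}$ is surjective iff $\Phi\circ\mathbf{T}_{{\bf w}_2}$ is (via a commutative triangle through an isomorphism $\Psi_{{\bf w}_1,{\bf w}_2}$ of the quantum polynomial algebras). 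Since any two adapted words differ by finitely many short braid moves, it then suffices to treat \emph{one} word, ${\bf w}_0={\bf w}_{\mathfrak{e}_{\mathcal{I}}}$, read off from the $\tau$-orbits of the injectives, and the point of Lemmas~\ref{l:number} and~\ref{l:dimensionvector} (a counting argument and a tilting-module argument) is exactly to show that ${\bf w}_0={\bf w}_{\mathcal{P}_*}$ for the preprojective directed partition $\mathcal{P}_*$, so that Theorem~\ref{t:injective-directed-partition} applies. Without the braid-move reduction you are signing up to prove a genuinely stronger claim by hand.

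Your proposed ``alternative route'' (grouping $\mathfrak{e}(1),\ldots,\mathfrak{e}(\nu)$ into consecutive support-compatible blocks so that ${\bf w}_{\mathfrak{e}}={\bf w}_{\mathcal{D}_*}$) cannot succeed for every adapted word, which is another reason the braid reduction is essential. In type $A_3$ with the linear orientation $1\to 2\to 3$, the word $(1,2,1,3,2,1)$ is adapted (it arises from the enumeration $P_3,P_2,P_1,S_2,I_2,I_1$), yet it is not ${\bf w}_{\mathcal{D}_*}$ for any directed partition $\mathcal{D}_*$: any maximal strictly increasing factorization must contain the block $(1,3)$, but then the simples $S_1$ and $S_3$ would each have to belong to that part, forcing repetitions of $S_1$ in the remaining blocks $(2)$ and $(1)$, which a partition forbids. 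So the identification you suggest fails already here, and the directed-partition machinery can only be invoked after reducing, as the paper does, to the single adapted word ${\bf w}_0$ (which the paper proves does equal ${\bf w}_{\mathcal{P}_*}$). In short: the architecture of your argument is plausible, but the load-bearing step is absent, and the shortcut you sketch to avoid it is not available.
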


Assume that ${\bf w}$ is adapted to the valued quiver $(Q, {\bf d})$.
By Theorem~\ref{t:Hallalgebra} $(1)$, we have $U_+\cong \mathcal{H}^*(Q)$.  Therefore to prove Theorem~\ref{t:injective},  it suffices to show that  $\int_{\bf w}:\mathcal{H}^*(Q)\to \mathbb{P}_{\bf w}$ is injective. Recall that we have the adjoint $\Phi\circ \mathbf{T}_{\bf w}:\mathbb{P}_{\bf w}^*\to \mathcal{H}(Q)$ of $\int_{\bf w}$~({\it cf.}~Section~\ref{ss:Feigin-type-maps}).
It is clear that Theorem~\ref{t:injective} follows from the   following result.
\begin{theorem}~\label{t:surjective}
Let ${\bf w}\in \mathscr{X}$ which is adapted to the valued quiver $(Q, {\bf d})$.
The map
\begin{eqnarray*}
\Phi\circ \mathbf{T}_{\bf w}:&\mathbb{P}_{\bf w}^*&\to \mathcal{H}(Q)\\
&t_{\underline{a}}&\mapsto \prod_{k=1}^\nu~v_{i_k}^{\frac{-a_k(a_k-1)}{2}}\frac{1}{[a_1]_{i_1}^!\cdots [a_\nu]_{i_\nu}^!}[S_{i_1}]^{*a_1}*\cdots *[S_{i_{\nu}}]^{*a_\nu}
\end{eqnarray*}
is surjective, where $[S_{i_k}]^{*a_k}$ is the product of $a_k$ copies of $[S_{i_k}]$.
\end{theorem}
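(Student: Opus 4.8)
The plan is to reduce Theorem~\ref{t:surjective} to a statement in the spirit of Proposition~\ref{p:order}, but tailored to an adapted word rather than to a directed partition, and then to run the endgame of the proof of Theorem~\ref{t:injective-directed-partition}. Recall that ${\bf w}$ adapted to $(Q,{\bf d})$ means ${\bf w}={\bf w}_{\mathfrak{e}}$ for some enumeration $\mathfrak{e}$; write $M_k=\mathfrak{e}(k)$, so that the order $\leq_{\mathfrak{e}}$ of Section~\ref{s:order} is available and the $k$-th letter of ${\bf w}$ is $i_k=\theta_\tau(M_{\nu+1-k})$. Since $\mathbf{T}_{\bf w}$ rescales each $t_{\underline a}$ by a nonzero scalar of $\Q(v)$ and, by Lemma~\ref{l:Green} and the Hall multiplication formula, $\Phi(x_{i_1}^{a_1}\cdots x_{i_\nu}^{a_\nu})$ is a nonzero scalar multiple of $\sum_{[L]}F^{L}_{S_{i_1}^{a_1},\dots,S_{i_\nu}^{a_\nu}}[L]$ (the sum over $L$ of dimension vector $\sum_k a_k\alpha_{i_k}$), the image of $\Phi\circ\mathbf{T}_{\bf w}$ on $t_{\underline a}$ equals, up to a unit of $\Q(v)$, exactly $\sum_{[L]}F^{L}_{S_{i_1}^{a_1},\dots,S_{i_\nu}^{a_\nu}}[L]$. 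As $\{[M]\mid M\in\rep(Q,{\bf d})\}$ is a $\Q(v)$-basis of $\mathcal{H}(Q)$, it suffices to show that every $[M]$ lies in the image.

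The whole problem therefore comes down to the following analogue of Proposition~\ref{p:order}: to each $M\in\rep(Q,{\bf d})$ one can attach a vector $\underline a(M)\in\N^{\nu}$ with $\sum_k a_k(M)\alpha_{i_k}=\dimv M$ such that \emph{(1)} $M\in\mathcal{E}(\underline a(M),{\bf w})$, and \emph{(2)} $L\in\mathcal{E}(\underline a(M),{\bf w})$ implies $M\leq_{\mathfrak{e}}L$. Granting this, the theorem follows exactly as at the end of the proof of Theorem~\ref{t:injective-directed-partition}: on a fixed dimension vector $\leq_{\mathfrak{e}}$ is a finite total order, so one argues by downward induction on $\leq_{\mathfrak{e}}$. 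By (1)-(2), $\Phi\circ\mathbf{T}_{\bf w}(t_{\underline a(M)})$ equals, up to a unit, $F^{M}_{S_{i_1}^{a_1},\dots,S_{i_\nu}^{a_\nu}}[M]+\sum_{M<_{\mathfrak{e}}L}F^{L}_{S_{i_1}^{a_1},\dots,S_{i_\nu}^{a_\nu}}[L]$ with $F^{M}_{\cdots}\neq 0$ and only finitely many $L$ occurring; hence $[M]$ lies in the image once all the strictly larger $[L]$ do, and the $\leq_{\mathfrak{e}}$-maximal $M$ of a given dimension vector has $\mathcal{E}(\underline a(M),{\bf w})=\{M\}$ and starts the induction.

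To produce $\underline a(M)$ and prove (1)-(2) I would induct on $\nu$, exploiting the structure of adapted words. Since ${\bf w}$ is adapted to $(Q,{\bf d})$, the first letter $i_1$ is a source of $Q$, so $S_{i_1}$ is simple injective (dually $i_\nu=\theta_\tau(\mathfrak{e}(1))$ governs a projective and one may peel from the other end). One defines $a_1(M)$ as the length of a canonical $S_{i_1}$-isotypic layer of $M$ --- chosen so that $M$ becomes the $\leq_{\mathfrak{e}}$-minimum, not the maximum, among modules carrying that layer --- passes from $M$ to the corresponding sub- or quotient-representation, applies the reflection functor $\sigma_{i_1}$ to land in a valued quiver $\sigma_{i_1}(Q,{\bf d})$ to which ${\bf w}'=(i_2\cdots i_\nu)$ is adapted, and reads off the remaining coordinates of $\underline a(M)$ there. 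Property (1) then amounts to the statement that $M$ carries the canonical $S_{\bullet}$-filtration prescribed by ${\bf w}$ --- a Hall-algebra shadow of the PBW theorem for Dynkin quivers --- and follows from the exactness behaviour of $\sigma_{i_1}$ and the inductive hypothesis; for (2) one uses Lemma~\ref{l:ordersurjective} and the fact that $\sigma_{i_1}$ permutes indecomposables compatibly with the enumerations, so that the inductive minimality on $\sigma_{i_1}(Q,{\bf d})$ transports back along $\leq_{\mathfrak{e}}$.

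The main obstacle is precisely this last lemma, and within it two points. First, pinning down the correct layer length $a_1(M)$: the naive greedy choice $a_1(M)=(\dimv M)_{i_1}$ is \emph{not} right --- already for $A_2$ it can make $\mathcal{E}(\underline a(M),{\bf w})$ have a proper decomposition of $M$ as its $\leq_{\mathfrak{e}}$-minimum --- and it is here that the adapted property is genuinely used. Second, checking that $\sigma_{i_1}$ carries the total order $\leq_{\mathfrak{e}}$ to the analogous order on $\sigma_{i_1}(Q,{\bf d})$, so that (1) and (2) propagate together through the induction. Once those compatibilities are set up, (2) reduces, just as in Lemma~\ref{l:order}, to an Euler-form inequality exploiting representation-directedness.
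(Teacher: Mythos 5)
Your proposal takes a genuinely different route from the paper, but it has a real gap at exactly the point you flag as ``the main obstacle,'' and that obstacle is not a technicality one can wave off.

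The paper does \emph{not} try to prove an analogue of Proposition~\ref{p:order} directly for an arbitrary adapted word. Instead it proceeds in two reductions. First, it observes that if ${\bf w}_1,{\bf w}_2\in\mathscr{X}$ differ by a short braid relation then there is an isomorphism of quantum polynomial algebras $\Psi_{{\bf w}_1,{\bf w}_2}$ intertwining $\Phi\circ\mathbf{T}_{{\bf w}_1}$ and $\Phi\circ\mathbf{T}_{{\bf w}_2}$, so surjectivity need only be checked for one adapted word per valued quiver. Second, it singles out the word ${\bf w}_0={\bf w}_{\mathfrak{e}_{\mathcal I}}$ coming from the injective-slice enumeration $\tau^t I_n,\dots,I_1$, and proves (Lemma~\ref{l:number} and, decisively, Lemma~\ref{l:dimensionvector} via a tilting module $T=X_k\oplus M_{\mathcal P_k}$ and an Euler-form computation) that this particular adapted word coincides with ${\bf w}_{\mathcal P_*}$, the word of the preprojective directed partition. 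At that point Theorem~\ref{t:injective-directed-partition} applies verbatim and nothing more is needed. Notice how much work the braid-move reduction saves: there is no need to say anything about a general adapted ${\bf w}$, and the one word that is analysed happens, non-obviously, to be a directed-partition word.

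Your plan skips both reductions and goes for a direct statement: attach to each $M$ a vector $\underline a(M)\in\N^\nu$ making $M$ the $\leq_{\mathfrak e}$-minimum of $\mathcal E(\underline a(M),{\bf w})$, and prove it by BGP-reflection induction. Three things are left unproved and none of them is routine. (i) You acknowledge that the greedy choice $a_1(M)=(\dimv M)_{i_1}$ is wrong, but you never actually specify what $a_1(M)$ is; without that, property (1) is not even well-posed. (ii) The compatibility of $\sigma_{i_1}$ with the total orders $\leq_{\mathfrak e}$ and $\leq_{\sigma_{i_1}\mathfrak e}$ is asserted, not proved; a reflection functor kills $S_{i_1}$ and shifts the multiplicities of every other indecomposable nontrivially, so this is precisely where the argument could break. (iii) A word in $\mathscr X$ has length exactly $\nu$, with letters $i\in I$ repeating $\#\{\text{slices meeting }i\}$ times; this is structurally different from a directed-partition word ${\bf w}_{\mathcal D_*}$, whose length is $\sum_k |\mathcal S_k|\geq\nu$ and where within each block every letter occurs at most once. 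Proposition~\ref{p:order}'s proof leans on that once-per-block structure (via the decomposition $M=\bigoplus M_{(k)}$ and Lemma~\ref{l:order}); your induction would have to reproduce that leverage from scratch in a shorter word, and nothing in the sketch explains how. In short, the approach may be salvageable, but as written the keystone lemma is missing, and it is the lemma, not the surrounding reduction, that carries the whole theorem.
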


\subsubsection{The behavior of $\Phi\circ \mathbf{T}_{\bf w}$ under short braid relations}
Let ${\bf w}_1$ and ${\bf w}_2$ be words of reduced expressions of $\omega_0$.
Denote by ${\bf w}_1=(j_1\cdots j_\nu)$ and ${\bf w}_2=(l_1\cdots l_\nu)$.
  Recall that we have the quantum polynomial algebras $\mathbb{P}_{{\bf w}_1}$ and $\mathbb{P}_{{\bf w}_2}$,
\[\mathbb{P}_{{\bf w}_1}=\langle t_1,\cdots, t_\nu~|~t_bt_a=v^{(\alpha_{j_b}, \alpha_{j_a})}t_at_b~\text{for}~b>a\rangle\]
\[\mathbb{P}_{{\bf w}_2}=\langle u_1,\cdots, u_\nu~|~u_bu_a=v^{(\alpha_{l_b}, \alpha_{l_a})}u_au_b~\text{for}~b>a\rangle.
\]
Assume moreover that ${\bf w}_2$ can be obtained from ${\bf w}_1$ by a short braid relation, say ${\bf w}_1=(j_1\cdots j_kj_{k+1}\cdots j_\nu)$ and ${\bf w}_2=(j_1\cdots j_{k+1}j_k\cdots j_\nu)$. In particular, $[S_{j_k}]$ and $[S_{j_{k+1}}]$ commute in $\mathcal{H}(Q)$. It is easy to see that the following assignment extends to an isomorphism of algebras
\begin{eqnarray*}
\Psi_{{\bf w}_1,{\bf w}_2}:&\mathbb{P}_{{\bf w}_1}&\to \mathbb{P}_{{\bf w}_2}\\
&t_i&\mapsto\begin{cases}u_{k+1}&i= k\\ u_{k}&i=k+1\\ u_i&\text{else}.
\end{cases}
\end{eqnarray*}
Moreover, the following diagram is commutative
\[\xymatrix{\mathbb{P}_{{\bf w}_2}^*\ar[d]_{\Psi_{{\bf w}_1,{\bf w}_2}^*}\ar[r]^{\Phi\circ \mathbf{T}_{{\bf w}_2}}&\mathcal{H}(Q)\\ \mathbb{P}_{{\bf w}_1}^*\ar[ur]_{\Phi\circ \mathbf{T}_{{\bf w}_1}},
}
\]
where $\Psi_{{\bf w}_1,{\bf w}_2}^*:\mathbb{P}_{{\bf w}_2}^*\to \mathbb{P}_{{\bf w}_1}^*$ is the adjoint of $\Psi_{{\bf w}_1,{\bf w}_2}$,  which is an isomorphism of coalgebras.

If  ${\bf w}_1$ and ${\bf w}_2$ are two words of  reduced expressions of $\omega_0$ which are adapted to a valued quiver $(Q, {\bf d})$,  we know that ${\bf w}_2$ can be obtained from ${\bf w}_1$ by finitely many steps of short braid relations. The above discussion implies that $\Phi\circ \mathbf{T}_{{\bf w}_1}$ is surjective if and only if $\Phi\circ \mathbf{T}_{{\bf w}_2}$ is surjective. In particular, to obtain Theorem~\ref{t:surjective}, it suffices to prove it for a special word ${\bf w}_0$ which is also adapted to the same valued quiver of ${\bf w}$.

\subsubsection{A special choice of the word ${\bf w}_0$}
Suppose that ${\bf w}$ is adapted to the valued quiver $(Q, {\bf d})$. Recall that $Q_0=I=\{1,\cdots, n\}$ and we have enumerated  the vertices of $Q$ in such a way that
\[\text{if there is an arrow from $i$ to $j$, then $i<j$.}
\]

Denote by $\mathcal{I}_1=\mathcal{I}=\{I_1,I_2,\cdots, I_n\}$ and $\mathcal{I}_k=\{\tau^{k-1}I_1,\tau^{k-1}I_2,\cdots, \tau^{k-1}I_n\}$ for each $k\in \N$. Since $|\ind(Q, {\bf d})|=\nu<\infty$, there is a unique $t\in \N$ such that $\mathcal{I}_{t}\neq \{0\}$ and $\mathcal{I}_{t+1}=\{0\}$. It is clear that $\tau^{t}I_n,\tau^{t}I_{n-1},\cdots, \tau^{t}I_1, \tau^{t-1}I_n,\cdots, I_n,\cdots, I_1$ yields an enumeration $\mathfrak{e}_{\mathcal{I}}$ of $\ind (Q,{\bf d})$ by deleting the zero representations in the sequence. Denote by ${\bf w}_0:={\bf w}_{\mathfrak{e}_{\mathcal{I}}}$ the corresponding word of $\mathfrak{e}_{\mathcal{I}}$ ({\it cf.} Lemma~\ref{l:reducedexpression}) which is also adapted to the valued quiver $(Q, {\bf d})$.
Thus to prove Theorem~\ref{t:injective}, it remains to prove Theorem~\ref{t:surjective} for this special word ${\bf w}_0$.

For any $1\leq k\leq~t$, we define a subset ${\bf S}_k$ of $I$ as
\[{\bf S}_k=\{j\in I~|~\tau^{k-1} I_j=0\}.
\]
By the definition, we have $\emptyset={\bf S}_1\subset {\bf S}_2\subset {\bf S}_2\subset\cdots\subset {\bf S}_{t}\subset I$.
For each $1\leq k\leq ~t$, we consider the following word
\[{\bf w}_{(k)}=(i_{k1}i_{k2}\cdots i_{kt_k}), ~\text{where $i_{k1}<i_{k2}<\cdots<i_{kt_k}$, $i_{kj}\in I\backslash {\bf S}_k$ and $t_k=| I\backslash {\bf S}_k|$.}
\]
In particular, ${\bf w}_{(1)}=(1 2\cdots  n)$ and ${\bf w}_0=({\bf w}_{(1)}, {\bf w}_{(2)},\cdots, {\bf w}_{(t)})$.

Denote by $\mathcal{P}_1=\mathcal{P}=\{P_1,P_2,\cdots, P_n\}$ and $\mathcal{P}_k=\{\tau^{-k+1}P_1, \tau^{-k+1}P_2,\cdots, \tau^{-k+1}P_n\}$ for $k\in \N$. Again by the finiteness of $|\ind (Q, {\bf d})|$, there is an $s\in \N$ such that $\mathcal{P}_{s}\neq \{0\}$ and $\mathcal{P}_{s+1}=\{0\}$.
It is clear that $\mathcal{P}_*=\mathcal{P}_1\cup\mathcal{P}_2\cup\cdots\cup \mathcal{P}_s$ is a directed partition of $\ind(Q, {\bf d})$.
\begin{lemma}~\label{l:number}
For each $k\in \N$, we have $|\mathcal{P}_k|=|\mathcal{I}_k|$ and $t=s$.
\end{lemma}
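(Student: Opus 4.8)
The plan is to show that, with $|\mathcal{I}_k|$ and $|\mathcal{P}_k|$ read as the number of nonzero (equivalently, indecomposable) entries of the respective lists $\tau^{k-1}I_1,\dots,\tau^{k-1}I_n$ and $\tau^{-(k-1)}P_1,\dots,\tau^{-(k-1)}P_n$, both quantities count the same thing: the number of $\tau$-orbits in $\ind(Q,{\bf d})$ having at least $k$ elements. The equality $t=s$ is then immediate, since $t$ (resp. $s$) is the largest $k$ with $|\mathcal{I}_k|\ge 1$ (resp. $|\mathcal{P}_k|\ge 1$).

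First I would organize the $\tau$-orbit structure of $\ind(Q,{\bf d})$ from the listed facts. By the fact that every $M\in\ind(Q,{\bf d})$ has a unique expression $M\cong\tau^{k_M}I_{i_M}$, the indecomposables with a fixed injective label $i\in I$ are exactly $I_i,\tau I_i,\dots,\tau^{c_i}I_i$, where $c_i:=\max\{c\ge 0\mid\tau^cI_i\neq 0\}$, and no other power of $\tau$ applied to $I_i$ is nonzero. Since $\tau P=0$ precisely for $P\in\mathcal{P}$, the representation $\tau^{c_i}I_i$ is projective, and it is the only projective among these (if $\tau^aI_i$ and $\tau^bI_i$ with $a<b$ were both projective, then $\tau^bI_i=\tau^{b-a-1}(\tau\,\tau^aI_i)=0$, a contradiction). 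Hence $P\mapsto I_{i_P}$ is a well-defined map $\phi\colon\mathcal{P}\to\mathcal{I}$; it is injective by the uniqueness of $P\cong\tau^{k_P}I_{i_P}$ together with the previous sentence, and since $|\mathcal{P}|=|\mathcal{I}|=n$ it is a bijection. For $P\in\mathcal{P}$ set $d(P):=k_P+1$, the number of indecomposables in the $\tau$-orbit of $P$.

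Next I would compute the two cardinalities. For $j\in I$, writing $P:=\phi^{-1}(I_j)=\tau^{k_P}I_j$, the chain $I_j,\tau I_j,\dots,\tau^{k_P}I_j=P$ consists of nonzero indecomposables and $\tau^{k_P+1}I_j=\tau P=0$, so $\tau^{k-1}I_j\neq 0$ if and only if $k\le d(P)$. Dually, for $i\in I$ one has $\tau^{-l}P_i=\tau^{\,k_{P_i}-l}I_{i_{P_i}}$ for $0\le l\le k_{P_i}$ and $\tau^{-(k_{P_i}+1)}P_i=\tau^{-1}I_{i_{P_i}}=0$, so $\tau^{-(k-1)}P_i\neq 0$ if and only if $k\le d(P_i)$. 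Therefore
\[
|\mathcal{I}_k|=\#\{j\in I\mid k\le d(\phi^{-1}(I_j))\}=\#\{P\in\mathcal{P}\mid d(P)\ge k\}=\#\{i\in I\mid k\le d(P_i)\}=|\mathcal{P}_k|,
\]
the middle equality being a reindexing along the bijection $\phi$. Taking $k$ maximal yields $t=\max_{P\in\mathcal{P}}d(P)=s$.

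The delicate point, and the only one that is not purely formal, is justifying from the listed facts the precise bookkeeping of when $\tau^{\pm(k-1)}$ kills $I_j$ or $P_i$: that $\tau^{-1}$ may be iterated on a non-injective indecomposable, that such an orbit terminates at a single injective, and dually for $\tau$ and projectives. This is exactly where representation-directedness (absence of cycles) and the restriction-bijection $\tau\colon\ind(Q,{\bf d})\setminus\mathcal{P}\to\ind(Q,{\bf d})\setminus\mathcal{I}$ are used; once the orbit picture above is in place the remainder is a counting argument. Alternatively, one can run the same proof using the dual ``projective'' canonical form $M\cong\tau^{-m_M}P_{j_M}$, which is the statement dual to the listed one, making the symmetry between $\mathcal{I}_k$ and $\mathcal{P}_k$ manifest.
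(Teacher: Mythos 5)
Your proof is correct, but it takes a genuinely different route from the paper's. You organize $\ind(Q,{\bf d})$ into $\tau$-orbits, show each orbit is an interval running from a unique injective to a unique projective, and exhibit a bijection $\phi\colon\mathcal{P}\to\mathcal{I}$ along which both $|\mathcal{I}_k|$ and $|\mathcal{P}_k|$ are revealed as the count of $\tau$-orbits of length at least $k$; the equality is then a reindexing, and $t=s$ is the maximal such $k$. The paper instead argues by induction on $k$: it writes $|\mathcal{P}_k|=|\mathcal{P}_{k-1}|-|\mathcal{P}_{k-1}\cap\mathcal{I}_1|$, $|\mathcal{I}_k|=|\mathcal{I}_{k-1}|-|\mathcal{I}_{k-1}\cap\mathcal{P}_1|$, and cancels the correction terms via the one-step bijection $M\mapsto\tau^{k-2}M$ from $\mathcal{P}_{k-1}\cap\mathcal{I}_1$ to $\mathcal{I}_{k-1}\cap\mathcal{P}_1$; the equality $t=s$ then follows by summing $\nu=\sum_k|\mathcal{P}_k|=\sum_k|\mathcal{I}_k|$. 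Your version is more conceptual and makes the common quantity explicit (number of long $\tau$-orbits), which also gives $t=s=\max_P d(P)$ directly without the summation step; the paper's version is shorter and more local, needing only the single-step cancellation rather than the full orbit picture, but at the price of an induction and an extra counting argument for $t=s$. One small point to be careful about in your write-up: the identity $\tau^{-l}P_i=\tau^{k_{P_i}-l}I_{i_{P_i}}$ for $0\le l\le k_{P_i}$ does need the observation that each intermediate term is non-projective (for applying $\tau^{-1}$) and non-injective (for the identity $\tau^{-1}\tau X=X$), which in turn relies on the uniqueness of the expression $M\cong\tau^{k_M}I_{i_M}$; you do flag this as the delicate point, and it does hold.
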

\begin{proof}
We prove the statement $|\mathcal{P}_k|=|\mathcal{I}_k|$ by induction on $k$. This is clear for $k=1$. Suppose that the equality holds for $k-1$. By the definition, we have
\[|\mathcal{P}_k|=|\mathcal{P}_{k-1}|-|\mathcal{P}_{k-1}\cap \mathcal{I}_1|~\text{and} ~|\mathcal{I}_k|=|\mathcal{I}_{k-1}|-|\mathcal{I}_{k-1}\cap \mathcal{P}_1|.
\]
Note that there is a bijection $g_{k-1}:\mathcal{P}_{k-1}\cap \mathcal{I}_1\to \mathcal{I}_{k-1}\cap \mathcal{P}_1$ given by $M\mapsto \tau^{k-2}M$ and the equality $|\mathcal{P}_k|=|\mathcal{I}_k|$ follows.

To show $t=s$, it is enough to notice that $|\ind(Q, {\bf d})|=\nu=|\mathcal{P}_1|+\cdots+ |\mathcal{P}_s|=|\mathcal{I}_1|+\cdots+|\mathcal{I}_t|$ and  $|\mathcal{P}_k|=|\mathcal{I}_k|$ for each $k\in \N$.

\end{proof}

\begin{lemma}~\label{l:dimensionvector}
For each $1\leq k\leq~t$, let $U_k=\opname{span}\{\dimv M~|~M\in \mathcal{P}_k\}\subseteq \Z^n$. Then the following equalities hold
\[U_k=\opname{span}\{\underline{e_i}~|~i\in I\backslash{\bf S}_k\}\subseteq \Z^n,
\]
where $\underline{e_1}, \underline{e_2},\cdots, \underline{e_n}$ is the standard basis of $\Z^n$.

\end{lemma}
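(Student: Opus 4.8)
\textbf{Proof proposal for Lemma~\ref{l:dimensionvector}.}
The plan is to prove the two inclusions $U_k\subseteq \opname{span}\{\underline{e_i}\mid i\in I\backslash {\bf S}_k\}$ and $U_k\supseteq \opname{span}\{\underline{e_i}\mid i\in I\backslash{\bf S}_k\}$ separately, and to obtain the reverse inclusion by a dimension count using Lemma~\ref{l:number}. First I would settle the inclusion $U_k\subseteq \opname{span}\{\underline{e_i}\mid i\in I\backslash {\bf S}_k\}$. For $M=\tau^{-k+1}P_j\in\mathcal{P}_k$ with $j\notin{\bf S}_k$ (so $M\neq 0$), I claim that the $i$-th coordinate of $\dimv M$ vanishes whenever $i\in {\bf S}_k$, i.e.\ whenever $\tau^{k-1}I_i=0$. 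Indeed, $(\dimv M)_i=\dim_K\Hom(P_i,M)/f_i=\dim_K\Hom(P_i,\tau^{-k+1}P_j)/f_i$, and by the Auslander--Reiten formula (or the standard fact $\Hom(P_i,\tau^{-k+1}P_j)\cong D\Ext^{?}$-type shift, more simply $\Hom(P_i,\tau^{-k+1}P_j)\cong \Hom(\tau^{k-1}I_i^{\vee}\dots)$) one reduces to: if $\tau^{k-1}I_i=0$ then $\Hom(P_i,\tau^{-k+1}P_j)=0$. A clean way to see this: applying $\tau^{k-1}$ is an equivalence on the subcategory of modules with no projective summands in the relevant range, and $\Hom(P_i,\tau^{-k+1}P_j)\cong \Hom(\tau^{k-1}P_i,P_j)$ when all intermediate terms are non-projective (Auslander--Reiten / derived shift); combined with the duality $\Hom(\tau^{k-1}P_i,P_j)\cong D\Hom(P_j, \tau^{k}I_{?})$ and the vanishing $\tau^{k-1}I_i=0\Leftrightarrow I_i\in\mathcal{I}_{k-1}\cap\text{(projectives killed early)}$, one gets the claim. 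This shows every $\dimv M$ for $M\in\mathcal{P}_k$ lies in $\opname{span}\{\underline{e_i}\mid i\in I\backslash{\bf S}_k\}$, hence so does $U_k$.

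Next I would bound the dimension from below. The $|\mathcal{P}_k|$ representations in $\mathcal{P}_k$ are pairwise non-isomorphic indecomposables, and by the bijection $\dimv:\ind(Q,{\bf d})\to\Phi^+$ their dimension vectors are distinct positive roots. To conclude $\dim_{\Q} U_k\ge |\mathcal{P}_k|$ I would use the Euler-form argument already appearing in the proof of Lemma~\ref{l:order}: enumerate $\mathcal{P}_k=\{M_{k1},\dots,M_{kt_k}\}$ (via $\mathfrak{e}_{\mathcal{D}_*}$ for the partition $\mathcal{P}_*$) so that $\Hom(M_{ki},M_{kj})=0$ for $i<j$, and $\Ext^1(M_{ki},M_{kj})=0$ for $i\le j$ since all the $M_{ki}$ lie in one part of a directed partition. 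Then $\langle\dimv M_{ki},\dimv M_{kj}\rangle=\dim_K\Hom(M_{ki},M_{kj})-\dim_K\Ext^1(M_{ki},M_{kj})$ is $>0$ for $i=j$ and $=0$ for $i<j$; so the Gram matrix of the vectors $\dimv M_{k1},\dots,\dimv M_{kt_k}$ against $\dimv M_{k1},\dots$ with respect to $\langle-,-\rangle$ is upper triangular with nonzero diagonal, hence these dimension vectors are linearly independent. Therefore $\dim_\Q U_k\ge |\mathcal{P}_k|$.

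Finally I would assemble the two facts. By Lemma~\ref{l:number}, $|\mathcal{P}_k|=|\mathcal{I}_k|=|\{i\in I\mid \tau^{k-1}I_i\neq 0\}|=|I\backslash{\bf S}_k|=\dim_\Q\opname{span}\{\underline{e_i}\mid i\in I\backslash{\bf S}_k\}$. Combining with Step~1 we get
\[
|I\backslash{\bf S}_k|\le \dim_\Q U_k\le \dim_\Q\opname{span}\{\underline{e_i}\mid i\in I\backslash{\bf S}_k\}=|I\backslash{\bf S}_k|,
\]
so all inequalities are equalities, $U_k\subseteq\opname{span}\{\underline{e_i}\mid i\in I\backslash{\bf S}_k\}$ is an equality of subspaces, and the lemma follows. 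The main obstacle I anticipate is Step~1, the vanishing statement $\tau^{k-1}I_i=0\Rightarrow \Hom(P_i,\tau^{-k+1}P_j)=0$: one must handle carefully the Auslander--Reiten translation on the representation-directed hereditary category, in particular making precise the adjunction/duality identifying $\Hom(P_i,\tau^{-k+1}P_j)$ with a Hom-space that is forced to vanish once $I_i$ disappears from $\mathcal{I}_{k-1}$; the species setting (division rings $\mathbb{F}_i$ of dimension $f_i$) means one should work with $\dim_K$ and keep track of the factors $f_i$, but no genuinely new idea beyond standard AR theory for hereditary algebras (see~\cite{DR}) is needed.
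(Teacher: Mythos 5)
Your argument is correct in outline but takes a genuinely different route in its second half. The first step coincides with the paper's: one must show that $(\dimv M)_i=0$ for $M\in\mathcal{P}_k$ and $i\in{\bf S}_k$. The paper reads this coordinate off as $\dim_K\Hom(M,I_i)/f_i$ and uses the clean identity $\Hom(\tau^{-k+1}P_j,I_i)\cong\Hom(P_j,\tau^{k-1}I_i)$, which vanishes precisely because $\tau^{k-1}I_i=0$; your version via $\Hom(P_i,\tau^{-k+1}P_j)$ is true but the AR-duality chain you sketch is garbled, and you would do better to switch to the injective-side computation, which makes the "main obstacle" you flag disappear. For the reverse inclusion the paper does something quite different: it proves that $T=\bigl(\bigoplus_{j\in{\bf S}_k}I_j\bigr)\oplus M_{\mathcal{P}_k}$ is a tilting module (the $\Ext$-vanishings coming from injectivity, the directed partition, and AR duality, with Lemma~\ref{l:number} supplying $|T|=n$), so that the dimension vectors of its indecomposable summands form a $\Z$-basis of $\Z^n$, and then eliminates the coefficients of the $\dimv I_j$ in the expansion of $\underline{e_i}$ by an Euler-form computation. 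Your replacement --- linear independence of $\{\dimv M\mid M\in\mathcal{P}_k\}$ via the triangular Gram matrix of $\langle-,-\rangle$, plus the count $|\mathcal{P}_k|=|\mathcal{I}_k|=|I\backslash{\bf S}_k|$ from Lemma~\ref{l:number} --- is valid and arguably more elementary, avoiding tilting theory altogether. The one thing it buys less of: a dimension count only yields equality of $\Q$-spans, whereas the tilting-module argument yields equality of $\Z$-spans, which is what the displayed equality in the lemma literally asserts (the $\Z$-span of the $\dimv M$ could a priori be a finite-index sublattice). Since the only use of the lemma is to identify the supports $\mathcal{S}_k$ of the $M_{\mathcal{P}_k}$ with $I\backslash{\bf S}_k$ and hence ${\bf w}_{\mathcal{P}_*}$ with ${\bf w}_0$, your weaker $\Q$-span conclusion suffices for the application, but you should either state the lemma over $\Q$ or add the unimodularity argument if you want the statement as written.
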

\begin{proof}
For each $1<k\leq~t$, define
\[X_k:=\bigoplus_{j\in {\bf S}_k}I_j  ~\text{and}~ M_{\mathcal{P}_k}:=\bigoplus_{M\in \mathcal{P}_{k}}M.
\]
For any $i, j \in I$, we have $\Hom(\tau^{-k+1}P_i, I_j)\cong \Hom(P_i, \tau^{k-1}I_j)$. In particular, if $1<l\leq k$ and $j\in {\bf S}_l$, then $\Hom(\tau^{-k+1}P_i, I_j)=0$. Consequently, 
 \[\Hom(M_{\mathcal{P}_k}, X_j)=0 ~\text{for}~ 1< j\leq k,
 \]
which implies that $U_k\subseteq \opname{span}\{\underline{e_i}~|~i\in I\backslash{\bf S}_k\}$.

We claim that $T:=X_k\oplus M_{\mathcal{P}_k}$ is a tilting module for $\rep(Q, {\bf d})$. By Lemma~\ref{l:number}, we know that $|T|=n$.  It remains to show that $\Ext^1(T, T)=0$. Since  $X_k$ is injective, we have $\Ext^1(X_k, X_k)=0$  and $\Ext^1(M_{\mathcal{P}_k}, X_k)=0$. On the other hand, $\Ext^1(M_{\mathcal{P}_k}, M_{\mathcal{P}_k})=0$ since $\mathcal{P}_*$ is a directed partition.
 It remains to show $\Ext^1(X_k, M_{\mathcal{P}_k})=0$ and it suffices to show $\Ext^1(X, M)=0$ for any indecomposable direct summand $X$ of $X_k$ and  any indecomposable direct summand $M$ of $M_{\mathcal{P}_k}$. We may rewrite $X_k$ as
 \[X_k=(\bigoplus_{i\in {\bf S}_2\backslash {\bf S}_1}I_i)\oplus\cdots\oplus (\bigoplus_{i\in {\bf S}_k\backslash {\bf S}_{k-1}}I_i)
 \]
 Let $I_i$ be an indecomposable direct summand of $X_k$ such that $i\in {\bf S}_j\backslash{\bf S}_{j-1}$, where $2\leq j\leq k$. Let $M$ be an indecomposable direct summand of $M_{\mathcal{P}_k}$, there exists $i_M\in I$ such that $M=\tau^{-k+1}P_{i_M}$.  Note that $i\in {\bf S}_j$ implies that $\tau^{j-1}I_i=0$.
 By Auslander-Reiten duality, we have
 \[\Ext^1(I_i, M)\cong D\Hom(M, \tau I_i)=D\Hom(\tau^{-k+1}P_{i_M}, \tau I_i)=D\Hom(P_{i_M}, \tau^{k}I_i)=0,
 \]
 where $D=\Hom_K(-, K)$ is the $K$-duality.

Now $T$ is a tilting module implies that
\[\dimv I_j, ~j\in {\bf S}_k,~ \dimv M, ~M\in \mathcal{P}_k
\]
form a $\Z$-basis of $\Z^n$. In particular, for each $i\in I\backslash {\bf S}_k$, $\underline{e_i}$ is a $\Z$-linear combination of $\dimv I_j$ and $\dimv M$ for $j\in {\bf S}_k, ~M\in \mathcal{P}_k$, say
\[\underline{e_i}=\sum_{j\in {\bf S}_k}n_{ij}\dimv I_j+\sum_{M\in \mathcal{P}_k}n_{iM}\dimv M,
\]
where $n_{ij}, n_{iM}\in \Z$ for $j\in {\bf S}_k, M\in \mathcal{P}_k$. We claim that $n_{ij}=0$ for all $j\in {\bf S}_k$. Otherwise, let $l$ be the maximal element in ${\bf S}_k$ such that $n_{il}\neq 0$. Recall that $\Hom(M_{\mathcal{P}_k}, I_j)=0$ for all $j\in {\bf S}_k$ and $\Hom(I_j, I_l)=0$ for $j<l$. Considering the Euler bilinear form, we obtain
\begin{eqnarray*}
\langle \underline{e_i}, \dimv I_l\rangle&=&\sum_{j\in {\bf S}_k}n_{ij}\langle \dimv I_j, \dimv I_l\rangle+\sum_{M\in \mathcal{P}_k}n_{iM}\langle \dimv M, \dimv I_l\rangle\\
&=&n_{il}\dim_K\Hom(I_l, I_l)\neq 0.
\end{eqnarray*}
However, we have $\langle \underline{e_i}, \dimv I_l\rangle=\dim_K\Hom(S_i, I_l)=0$, a contradiction.
 In particular, $U_k=\opname{span}\{\underline{e_i}~|~i\in I\backslash{\bf S}_k\}$.

\end{proof}
 Recall that for the directed partition $\mathcal{P}_*$, we have a word ${\bf w}_{\mathcal{P}_*}$ associated to the directed partition $\mathcal{P}_*$ ({\it cf.}~Section~\ref{s:directedpartition}). As a consequence of Lemma~\ref{l:dimensionvector}, we have ${\bf w}_{\mathcal{P}_*}={\bf w}_0$.  By Theorem~\ref{t:injective-directed-partition}, $\mathbf{F}_{{\bf w}_0}$ is injective. This completes the proof of Theorem~\ref{t:injective}.

\section{Two consequences of the total  order}~\label{s:consequence}
\subsection{Monomial bases of $U_+$}
Let $C$ be a simply-laced Cartan matrix of finite type and $U_+$ the associated quantum enveloping algebra. In ~\cite{Reineke}, for a given directed partition $\mathcal{D}_{*}$, Reineke constructed a monomial basis for $U_+$. The key point in his construction is  the degeneration order $\leq_{\deg}$ which  is only valid for algebraically closed fields.  With the help of Frobenius morphisms, Deng and Du~\cite{DD} also constructed certain monomial bases for non-simply-laced quantum enveloping algebras of finite types via the degeneration order $\leq_{\deg}$.
In this subsection, we show that the order $\leq_{\mathfrak{e}_{\mathcal{D}_{*}}}$ we considered here can be used to replace the partial oder $\leq_{\deg}$ in~\cite{Reineke}. Thus it is possible to generalize most of the results in~\cite{Reineke} to all the Cartan matrices of finite type. Here we only consider the result of monomial bases.

Let $C\in M_{I\times I}(\Z)$ be a Cartan matrix of finite type and $U_+$ the associated quantum enveloping algebra. Let $(Q, {\bf d})$ be a valued quiver associated to  $C$. Recall that we have an isomorphism $U_+\cong \mathcal{H}(Q)$ by $E_i\mapsto [S_i]$.
In the following, we will identity $U_+$ with $\mathcal{H}(Q)$ by this isomorphism.

For $\underline{a}=(a_1,\cdots,a_n)\in \N^n$, set
\[E^{[\underline{a}]}:=E_1^{[a_1]}E_2^{[a_2]}\cdots E_n^{[a_n]}\in U_+.
\]
Let $\mathcal{D}_*=\mathcal{D}_1\cup\cdots\cup\mathcal{D}_s$ be a directed partition of $(Q, {\bf d})$.
For $M\in \rep(Q, {\bf d})$ with $\dimv M=(m_1,\cdots, m_n)$, we define a monomial as follows
\[E^{(M)}:=v^{-\dim_K\Hom(M, M)}\prod_{i=1}^nv_i^{m_i}E^{[\dimv M_{(1)}]}E^{[\dimv M_{(2)}]}\cdots E^{[\dimv M_{(s)}]}\in U_+.
\]

The following result has been proved in~\cite{Reineke} for simply-laced cases.
\begin{theorem}~\label{t:monomialbase}
$\{E^{(M)}~|~M\in \rep(Q, {\bf d})\}$ is a basis of $U_+$.
\end{theorem}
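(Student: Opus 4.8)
The plan is to deduce the theorem from the triangularity already established in the course of proving Theorem~\ref{t:injective-directed-partition}, using the total order $\leq_{\mathfrak{e}_{\mathcal{D}_*}}$ in place of the degeneration order $\leq_{\deg}$ of~\cite{Reineke}. Throughout we work under the identification $U_+\cong\mathcal{H}(Q)$, $E_i\mapsto[S_i]$, which is legitimate because $C$ is of finite type (Theorem~\ref{t:Hallalgebra}~(1)). Note that each $E^{(M)}$ is homogeneous of degree $|M|=\dimv M$, so it suffices to prove that $\{E^{(M)}\mid \dimv M=\alpha\}$ is a basis of the finite-dimensional space $\mathcal{H}(Q)_\alpha=\opname{span}\{[L]\mid\dimv L=\alpha\}$ for every $\alpha\in\N I$.

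First I would rewrite $E^{(M)}$ in Hall-algebra terms. Under the identification, $E_i^{[a]}=\frac{1}{[a]_i^!}E_i^a$ corresponds to $\frac{1}{[a]_i^!}[S_i]^{*a}$ (and $E_i^{[0]}=1$); moreover, for $N\in\add M_{\mathcal{D}_k}$ one has $(\dimv N)_i=\dim_K\Hom(P_i,N)$, so the $i$-th component of $\dimv M_{(k)}$ is nonzero precisely when $i\in\mathcal{S}_k$. Writing $\dimv M_{(k)}=(m_{ki})_i$ we thus get $E^{[\dimv M_{(k)}]}=\prod_{i\in\mathcal{S}_k}E_i^{[m_{ki}]}$ (product in increasing order of $i$), and concatenating over $k=1,\dots,s$ one recognizes exactly the word ${\bf w}_{\mathcal{D}_*}$ together with the exponent vector $\underline{{\bf v}}_{\mathcal{D}_*}(M)$. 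Comparing with the explicit formulas for $\mathbf{T}_{{\bf w}_{\mathcal{D}_*}}$ and $\Phi$ in Section~\ref{ss:Feigin-type-maps}, a short computation gives
\[E^{(M)}=c_M\cdot\bigl(\Phi\circ\mathbf{T}_{{\bf w}_{\mathcal{D}_*}}\bigr)\bigl(t_{\underline{{\bf v}}_{\mathcal{D}_*}(M)}\bigr),\]
where $c_M\in\Q(v)$ is an explicit nonzero monomial in $v$ assembled from $\dim_K\Hom(M,M)$, $\dimv M$ and the $m_{ki}$; in fact the normalizing factor $v^{-\dim_K\Hom(M,M)}\prod_i v_i^{m_i}$ in the definition of $E^{(M)}$ is chosen precisely so that the leading coefficient below equals $1$, but for the present statement only $c_M\neq0$ is needed.

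Now I invoke the identity proved inside the argument for Theorem~\ref{t:injective-directed-partition}: if ${\bf w}_{\mathcal{D}_*}=(i_1i_2\cdots i_\mu)$ and $\underline{{\bf v}}_{\mathcal{D}_*}(M)=(a_1,\dots,a_\mu)$, then
\[\bigl(\Phi\circ\mathbf{T}_{{\bf w}_{\mathcal{D}_*}}\bigr)\bigl(t_{\underline{{\bf v}}_{\mathcal{D}_*}(M)}\bigr)=h_{{\bf w}_{\mathcal{D}_*}}\bigl(\underline{{\bf v}}_{\mathcal{D}_*}(M)\bigr)\Bigl([M]+\sum_{M<_{\mathfrak{e}_{\mathcal{D}_*}}L}F^L_{S_{i_1}^{a_1},\cdots,S_{i_\mu}^{a_\mu}}[L]\Bigr),\]
with $h_{{\bf w}_{\mathcal{D}_*}}(\cdot)\neq0$ and the sum ranging over the finitely many $L$ with $\dimv L=\dimv M$ and $M<_{\mathfrak{e}_{\mathcal{D}_*}}L$ (this finiteness uses that $(Q,{\bf d})$ is representation-finite). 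Combining, $E^{(M)}=d_M\bigl([M]+\sum_{M<_{\mathfrak{e}_{\mathcal{D}_*}}L}(\ast)[L]\bigr)$ with $d_M\neq0$. Hence, ordering the isoclasses with $\dimv=\alpha$ by $\leq_{\mathfrak{e}_{\mathcal{D}_*}}$, the matrix expressing $\{E^{(M)}\mid\dimv M=\alpha\}$ in the basis $\{[L]\mid\dimv L=\alpha\}$ of $\mathcal{H}(Q)_\alpha$ is triangular with nonzero diagonal entries $d_M$, hence invertible; since both index sets have the same (finite) cardinality, $\{E^{(M)}\mid\dimv M=\alpha\}$ is a basis of $\mathcal{H}(Q)_\alpha$. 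Taking the union over all $\alpha$ yields the theorem.

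The only real work is the bookkeeping in the second step: matching the divided-power normalizations $[a]_i^!$ on the two sides, checking that the vertices occurring in $\dimv M_{(k)}$ are exactly those of $\mathcal{S}_k$, and verifying that the resulting $v$-power scalar is nonzero (indeed, that it yields leading coefficient $1$). Everything substantive has already been done in Proposition~\ref{p:order} and the computation behind Theorem~\ref{t:injective-directed-partition}; the point of this subsection is simply the observation that the same total order $\leq_{\mathfrak{e}_{\mathcal{D}_*}}$ which governs the injectivity of Feigin's map also produces Reineke's monomial basis, and since $\leq_{\mathfrak{e}_{\mathcal{D}_*}}$ makes sense over an arbitrary field this removes the algebraically-closed-field hypothesis inherent in the use of $\leq_{\deg}$.
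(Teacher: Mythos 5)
Your proof is correct and rests on the same ingredient as the paper's argument, namely the triangularity of the expansion in the Hall basis with respect to $\leq_{\mathfrak{e}_{\mathcal{D}_*}}$, established via Proposition~\ref{p:order}. The paper re-derives this triangularity directly for $E^{(M)}$ in two claims (first an explicit formula for $E^{[\underline{a}]}$, then the unitriangular expansion of $E^{(M)}$ with leading coefficient exactly $1$); you instead observe that $E^{(M)}$ is, up to the explicit nonzero $v$-power $c_M$, precisely $(\Phi\circ\mathbf{T}_{{\bf w}_{\mathcal{D}_*}})(t_{\underline{{\bf v}}_{\mathcal{D}_*}(M)})$, and recycle the triangular formula already proved inside Theorem~\ref{t:injective-directed-partition}. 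This shortcut is legitimate and slightly more economical (nonzero diagonal suffices for a basis, so you need not chase the exact normalization), and it makes transparent that Theorems~\ref{t:injective-directed-partition} and~\ref{t:monomialbase} are two faces of the same unitriangularity statement; the only price is the small bookkeeping step you flag, checking that $E^{[\dimv M_{(k)}]}$ reassembles into the letters of ${\bf w}_k$ exactly because the nonzero components of $\dimv M_{(k)}$ are supported on $\mathcal{S}_k$ and $E_i^{[0]}=1$ for the rest.
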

\begin{proof}
The proof follows the one of  ~\cite{Reineke}.
The idea  is to show that the matrix coefficients in the expansion of the $E^{(M)}$ in the basis $\{[M]~|~M\in \rep(Q, {\bf d})\}$ is lower unitriangular with respect to the order $\leq_{\mathfrak{e}_{\mathcal{D}_*}}$.
For completeness, we sketch the proof and we  separate the proof into  two claims.

{\bf Claim~1:} For each $\underline{a}=(a_1,\cdots, a_n)\in \N^n$, we have
\[E^{[\underline{a}]}=\prod_{i=1}^nv_i^{-a_i}\sum_{\dimv M=\underline{a}}v^{\langle |M|, |M|\rangle}[M].
\]
By a direct calculation, for each $1\leq i\leq n$, one has
\[E_i^{[a_i]}=\frac{[S_i]^{*a_i}}{[a_i]_i^{!}}=v_i^{a_i^2-a_i}[S_i^{a_i}].
\]
Thus,
\begin{eqnarray*}
E^{[\underline{a}]}&=&\prod_{i=1}^nv_i^{a_i^2-a_i} [S_1]^{a_1}*\cdots *[S_n]^{a_n}\\
&=&\prod_{i=1}^nv_i^{a_i^2-a_i} \sum_{[M]} v^{\sum_{k<l}a_ka_l\langle |S_k|, |S_l|\rangle}F^M_{S_1^{a_1},\cdots, S_n^{a_n}}[M].
\end{eqnarray*}
Note that if $F^M_{S_1^{a_1},\cdots, S_n^{a_n}}\neq 0$, then $F^M_{S_1^{a_1},\cdots, S_n^{a_n}}=1$ and in this case  we have $\dimv M=\underline{a}$, which implies that
\[v^{\langle|M|,|M|\rangle}=\prod_{i=1}^nv_i^{a_i^2}v^{\sum_{k<l}a_ka_l\langle|S_k|,|S_l|\rangle}.
\]
Putting all of these together, we obtain that $E^{[\underline{a}]}=\prod_{i=1}^nv_i^{-a_i}\sum_{\dimv M=\underline{a}}v^{\langle |M|, |M|\rangle}[M]$.

{\bf Claim~2:} For each $M\in \rep(Q, {\bf d})$, writing $E^{(M)}=\sum_{[L]}\gamma_L^M[L]$, we have $\gamma_L^M=0$ unless $M\leq_{\mathfrak{e}_{\mathcal{D}_*}}L $ and $\gamma_{M}^M=1$.

Denote by $\dimv M_{(k)}=(b_{k1},\cdots, b_{kn})\in \N^n$ and we have $\dimv M=\sum_{k=1}^s\dimv M_{(k)}=(m_1,\cdots, m_n)\in \N^n$.
We compute $E^{(M)}$ using Claim~1:
\begin{eqnarray*}
E^{(M)}&=&v^{-\dim_K\Hom(M,M)}\prod_{i=1}^nv_i^{m_i}\left(\prod_{i=1}^nv_i^{-b_{i1}}\sum_{\dimv N_1=\dimv M_{(1)}}v^{\langle|N_1|, |N_1|\rangle}[N_1]\right)\cdots \\ &&\left(\prod_{i=1}^nv_i^{-b_{s1}}\sum_{\dimv N_s=\dimv M_{(s)}}v^{\langle|N_s|, |N_s|\rangle}[N_s]\right)\\
&=&v^{-\dim_K\Hom(M,M)+\sum_{k=1}^s\langle|M_{(k)}|,|M_{(k)}|\rangle}\sum_{
\dimv N_i=\dimv M_{(i)},
1\leq i\leq s}[N_1]*[N_2]*\cdots* [N_s]\\
&=&v^{-\dim_K\Hom(M,M)+\sum_{k=1}^s\langle|M_{(k)}|,|M_{(k)}|\rangle+\sum_{k<l}\langle|M_{(k)}|,|M_{(l)}|\rangle}\\
&&\sum_{
\dimv N_i=\dimv M_{(i)},
1\leq i\leq s}\sum_{[L]}F^L_{N_1,\cdots, N_s}[L].
\end{eqnarray*}
Recall that we have $\Hom(M_{(j)}, M_{(i)})=0=\Ext^1(M_{(i)}, M_{(j)})$ for $1\leq i<j\leq s$ and $\Ext^1(M_{(i)}, M_{(i)})=0$ for all $1\leq i\leq s$.
We have
\[\dim_K\Hom(M,M)=\sum_{i=1}^s\dim_K\Hom(M_{(i)},M_{(i)})+\sum_{i<j}\dim_K\Hom(M_{(i)},M_{(j)}).\]
On the other hand,
$\langle |M_{(i)}|,|M_{(i)}|\rangle=\dim_K\Hom(M_{(i)},M_{(i)})$~and~\[\langle |M_{(i)}|, |M_{(j)}|\rangle=\dim_K\Hom(M_{(i)}, M_{(j)})~\text{for~$i<j$}.\]
In particular, we have proved that
\[E^{(M)}=\sum_{
\dimv N_i=\dimv M_{(i)},
1\leq i\leq s}\sum_{[L]}F^L_{N_1,\cdots, N_s}[L].
\]
Note that $F^L_{N_1,\cdots, N_s}\neq 0$ if and only if $L\in \mathcal{E}(\underline{{\bf v}}_{\mathcal{D}_*}(M), {\bf w}_{\mathcal{D}_*})$ and in this case we have $M\leq_{\mathfrak{e}_{\mathcal{D}_*}}L$ by Proposition~\ref{p:order}. Hence it remains to show the coefficient of $[M]$ in the right hand side is $1$. Note that $L=M$ if and only if $L_1\cong M_{(1)},\cdots, L_s\cong M_{(s)}$, and we have $F^M_{M_{(1)},\cdots, M_{(s)}}=1$. This finishes the proof of the Claim~$2$ and hence the proof of Theorem~\ref{t:monomialbase}.
\end{proof}

\subsection{A characterization of modules for representation-finite hereditary algebras}

Let $\Lambda$ be a finite-dimensional hereditary algebra over a  finite field $K$. Let $\mod \Lambda$ be the category of finite-dimensional right $\Lambda$-modules. Assume moreover that $\Lambda$ is representation-finite, {\it i.e.} there are only finitely many indecomposable $\Lambda$-modules up to isomorphism. Let ${\bf w}=(i_1i_2\cdots i_\nu)$ be the word of a reduced expression of the longest element  in the Weyl group associated to $\Lambda$. In ~\cite{Reineke}, Reineke observed  a direct consequence of the injectivity of Feigin's map, which says that each $\Lambda$-module $M$ is uniquely determined by the numbers $F^M_{S_{i_1}^{a_1}, \cdots, S_{i_\nu}^{a_\nu}}$ for $\underline{a}=(a_1,\cdots, a_\nu)\in \N^\nu$. The rest of this section is devoted to give a representation-theoretic interpretation for this fact.

Fix a directed partition $\mathcal{D}_*$ of $\Lambda$ and let ${\bf w}_{\mathcal{D}_*}=(i_1i_2\cdots i_\mu)$ be the word associated to $\mathcal{D}_*$.
For any $M\in \mod \Lambda$, we define a subset of $\N^\mu$ as follows
\[ \mathcal{S}(M, \mathcal{D}_*):=\{\underline{a}=(a_1,\cdots, a_\mu)\in \N^\mu~|~F^M_{S_{i_1}^{a_1}, \cdots, S_{i_\mu}^{a_\mu}}\neq 0\}.
\]
Note that the set $\mathcal{S}(M, \mathcal{D}_*)$ is always a finite set.
The following result inspired by Corollary $3.5$ of~\cite{Reineke}
shows that the module $M$ is uniquely determined by the set $\mathcal{S}(M,\mathcal{D}_*)$.
\begin{proposition}~\label{p:characterization}
Let $\Lambda$ be a representation-finite hereditary algebra over a finite field $K$. Let $\mathcal{D}_*$ be a directed partition of $\Lambda$ and ${\bf w}_{\mathcal{D}_*}$ the associated word with length $\mu$. Let $M, N$ be two $\Lambda$-modules. The following are equivalent:
\begin{itemize}
\item[(1)] $M\cong N$;
\item[(2)]  for any $\underline{a}=(a_1,\cdots, a_\mu)\in \N^{\mu}$, $F^M_{S_{i_1}^{a_1}, \cdots, S_{i_\mu}^{a_\mu}}=F^N_{S_{i_1}^{a_1}, \cdots, S_{i_\mu}^{a_\mu}}$;
\item[(3)] $\mathcal{S}(M,\mathcal{D}_*)=\mathcal{S}(N,\mathcal{D}_*)$.
\end{itemize}
\end{proposition}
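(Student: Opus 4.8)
The plan is to prove $(1)\Rightarrow(2)\Rightarrow(3)\Rightarrow(1)$, so that it suffices to establish the two nontrivial implications $(2)\Rightarrow(1)$ and $(3)\Rightarrow(1)$. The implication $(1)\Rightarrow(2)$ is immediate since the Hall numbers $F^M_{S_{i_1}^{a_1},\dots,S_{i_\mu}^{a_\mu}}$ depend only on the isomorphism class of $M$, and $(2)\Rightarrow(3)$ is trivial by the very definition of $\mathcal{S}(-,\mathcal{D}_*)$ as the support of the family of these numbers. So the real content is recovering the isomorphism class of $M$ from either of the weaker data.

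For $(2)\Rightarrow(1)$, I would invoke the injectivity established in Theorem~\ref{t:injective-directed-partition}: the Feigin map $\mathbf{F}_{{\bf w}_{\mathcal{D}_*}}$, equivalently $\int_{{\bf w}_{\mathcal{D}_*}}:\mathcal{H}^*(Q)\to\mathbb{P}_{{\bf w}_{\mathcal{D}_*}}$, is injective. By the formula for $\int_{\bf w}$ in Theorem~\ref{t:Feigin-type maps}~(2), the coefficient of $t^{\underline a}$ in $\int_{{\bf w}_{\mathcal{D}_*}}(\delta_{[M]})$ is (up to a nonzero scalar depending only on $\underline a$) exactly $F^M_{S_{i_1}^{a_1},\dots,S_{i_\mu}^{a_\mu}}$. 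Hence if condition $(2)$ holds, then $\int_{{\bf w}_{\mathcal{D}_*}}(\delta_{[M]})=\int_{{\bf w}_{\mathcal{D}_*}}(\delta_{[N]})$, and injectivity forces $\delta_{[M]}=\delta_{[N]}$, i.e. $M\cong N$. (Working over $\mathbb{Q}(v)$ with $v^2=q$ the finite field of $\Lambda$; since $\rep(Q,{\bf d})\simeq\mod\Lambda$, these Hall numbers are the same integers as those computed with $\Lambda$-modules.)

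The main obstacle is $(3)\Rightarrow(1)$, where we are only given the \emph{support} of the Hall numbers, not the numbers themselves, so Feigin's injectivity does not apply directly. Here I would use the total order $\leq_{\mathfrak{e}_{\mathcal{D}_*}}$ and Proposition~\ref{p:order}. The key point is that $\underline{{\bf v}}_{\mathcal{D}_*}(M)\in\mathcal{S}(M,\mathcal{D}_*)$ (part (1) of Proposition~\ref{p:order}), and for any $L$ with $F^L_{S_{i_1}^{a_1},\dots,S_{i_\mu}^{a_\mu}}\neq0$ where $\underline a=\underline{{\bf v}}_{\mathcal{D}_*}(M)$ one has $M\leq_{\mathfrak{e}_{\mathcal{D}_*}}L$ (part (2)), with $M$ itself occurring ($F^M_{\dots}=1$). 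I claim that the map $M\mapsto \underline{{\bf v}}_{\mathcal{D}_*}(M)$ lets us read off $M$ as the $\leq_{\mathfrak{e}_{\mathcal{D}_*}}$-minimal element $X$ of $\rep(Q,{\bf d})$ with $\underline{{\bf v}}_{\mathcal{D}_*}(M)\in\mathcal{E}(\underline{{\bf v}}_{\mathcal{D}_*}(X'),\dots)$; more concretely, proceed by induction on $\leq_{\mathfrak{e}_{\mathcal{D}_*}}$ to show $\mathcal{S}(M,\mathcal{D}_*)$ determines $M$. Given $\mathcal{S}:=\mathcal{S}(M,\mathcal{D}_*)=\mathcal{S}(N,\mathcal{D}_*)$, among all representations $X$ with $\underline{{\bf v}}_{\mathcal{D}_*}(X)\in\mathcal{S}$ pick the $\leq_{\mathfrak{e}_{\mathcal{D}_*}}$-smallest one, say $X_0$; by Proposition~\ref{p:order}(2) every representation contributing to the nonvanishing of $F^{(-)}_{S^{v(X_0)}}$ is $\geq_{\mathfrak{e}_{\mathcal{D}_*}}X_0$, and minimality pins down that $X_0$ must equal both $M$ and $N$ — one checks that $\underline{{\bf v}}_{\mathcal{D}_*}$ is injective on isoclasses (it records the multiplicities $\mu_U$ of each indecomposable $U$, since the blocks of $\mathcal{D}_*$ partition $\ind(Q,{\bf d})$ and within a block the generated vector records the dimension-vector data which, by the tilting/basis argument as in Lemma~\ref{l:dimensionvector}, recovers the multiplicities), so $X_0\cong M\cong N$. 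The delicate step to verify carefully is precisely this injectivity of $\underline{{\bf v}}_{\mathcal{D}_*}$ together with the minimality argument; once that is in place, $(3)\Rightarrow(1)$ follows, completing the cycle of implications.
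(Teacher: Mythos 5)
Your reduction of the proposition to the two nontrivial implications is fine, and your handling of $(2)\Rightarrow(1)$ via the injectivity of $\int_{{\bf w}_{\mathcal{D}_*}}$ (Theorem~\ref{t:injective-directed-partition}) is correct, though heavier than necessary: once $(3)\Rightarrow(1)$ is established, $(2)\Rightarrow(1)$ follows for free via $(2)\Rightarrow(3)$, so the Hall-algebra injectivity machinery is not needed at all.

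The real issue is your $(3)\Rightarrow(1)$. Your ``minimality'' argument does not close. You take $X_0$ to be $\leq_{\mathfrak{e}_{\mathcal{D}_*}}$-minimal among representations $X$ with $\underline{{\bf v}}_{\mathcal{D}_*}(X)\in\mathcal{S}$, and observe that every $L\in\mathcal{E}(\underline{{\bf v}}_{\mathcal{D}_*}(X_0),{\bf w}_{\mathcal{D}_*})$ satisfies $X_0\leq_{\mathfrak{e}_{\mathcal{D}_*}}L$. Since $\underline{{\bf v}}_{\mathcal{D}_*}(X_0)\in\mathcal{S}(M,\mathcal{D}_*)$, this gives $X_0\leq_{\mathfrak{e}_{\mathcal{D}_*}}M$, and the same for $N$; but minimality only gives $X_0\leq_{\mathfrak{e}_{\mathcal{D}_*}}M$ a second time, never $M\leq_{\mathfrak{e}_{\mathcal{D}_*}}X_0$. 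The step ``minimality pins down $X_0\cong M$'' is therefore unjustified, and the appeal to injectivity of $\underline{{\bf v}}_{\mathcal{D}_*}$ does not help either, since nothing forces $\underline{{\bf v}}_{\mathcal{D}_*}(X_0)=\underline{{\bf v}}_{\mathcal{D}_*}(M)$. (You also have not justified that the minimum exists.)

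The paper's argument is much more direct and avoids all of this. One simply notes $\underline{{\bf v}}_{\mathcal{D}_*}(M)\in\mathcal{S}(M,\mathcal{D}_*)=\mathcal{S}(N,\mathcal{D}_*)$, so $N\in\mathcal{E}(\underline{{\bf v}}_{\mathcal{D}_*}(M),{\bf w}_{\mathcal{D}_*})$, whence Proposition~\ref{p:order}(2) yields $M\leq_{\mathfrak{e}_{\mathcal{D}_*}}N$; by symmetry $N\leq_{\mathfrak{e}_{\mathcal{D}_*}}M$, and antisymmetry of the total order gives $M\cong N$. In other words, you already had the right tool in hand (Proposition~\ref{p:order}), but instead of applying it twice in the obviously symmetric way, you introduced the auxiliary minimizer $X_0$, which creates a gap. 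Strip out $X_0$, the injectivity digression, and the Feigin-injectivity route for $(2)\Rightarrow(1)$, and what remains is exactly the paper's proof.
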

\begin{proof}
It is clear that $(1)$ implies $(2)$ and $(2)$ implies $(3)$. It remains to show that $(3)$ implies $(1)$.
Let ${\bf \underline{v}}_{\mathcal{D}_*}(M)$ and ${\bf \underline{v}}_{\mathcal{D}_*}(N)$ be the generated vectors of $M$ and $N$ respectively. Thus we have
\[{\bf \underline{v}}_{\mathcal{D}_*}(M), {\bf \underline{v}}_{\mathcal{D}_*}(N)\in \mathcal{S}(M, \mathcal{D}_*)=\mathcal{S}(N, \mathcal{D}_*).
\]
Now ${\bf \underline{v}}_{\mathcal{D}_*}(M)\in \mathcal{S}(N, \mathcal{D}_*)$ implies that $N\in \mathcal{E}({\bf \underline{v}}_{\mathcal{D}_*}(M), {\bf w}_{\mathcal{D}_*})$ and we have $M\leq_{\mathfrak{e}_{\mathcal{D}_*}} N$ by Proposition~\ref{p:order}.
Similarly, by ${\bf \underline{v}}_{\mathcal{D}_*}(N)\in \mathcal{S}(M, \mathcal{D}_*)$ one deduces that $N\leq_{\mathfrak{e}_{\mathcal{D}_*}} M$. Thus, $M\cong N$.

\end{proof}

\def\cprime{$'$}
\providecommand{\bysame}{\leavevmode\hbox
to3em{\hrulefill}\thinspace}
\providecommand{\MR}{\relax\ifhmode\unskip\space\fi MR }
\providecommand{\MRhref}[2]{%
  \href{http://www.ams.org/mathscinet-getitem?mr=#1}{#2}
} \providecommand{\href}[2]{#2}


\begin{thebibliography}{10}

\bibitem{Berenstein}
A. Berenstein, \emph{Group-like elements in a quantum groups and Feigin's conjecture}, q-alg/9605015.

\bibitem{BR}
A. Berenstein and D. Rupel, \emph{Quantum cluster characters of Hall algebras},
Sel. Math. New Ser. \textbf{21}(2015), 1121-1176.

\bibitem{DR}
V. Dlab and C. M. Ringel, \emph{Indecomposable representations of graphs and algebras}, Memoirs Amer. Math. Soc. \textbf{173}(1976).

\bibitem{DD}
Bangming Deng and Jie Du, \emph{Tight monomials and the monomial basis property}, J. Algebra \textbf{324}(2010), 3355-3377.

\bibitem{Green}
J. A. Green, \emph{Hall algebras, hereditary algebras and quantun groups}, Invent. Math. \textbf{120}(1995), 2, 361-377.
\bibitem{Green97}
J. A. Green, \emph{Quantum groups, Hall algebras and quantum shuffles}, In: Finite reductive groups (Luminy 1994), 273-290, Birkh\"{a}user Prog. Math. \textbf{141}(1997).

\bibitem{IM}
K. Iohara and F. Malikov, \emph{Rings of skew polynomials and Gel'fand-Kirillov conjecture for quantum groups}, Comm. Math. Phys. \textbf{164}(1994), no. 2, 217-237.

\bibitem{Joseph}
A. Joseph, \emph{Sur une conjecture de Feigin}, C. R. Acd. Sci. Pairs \textbf{320}(1995), Serie I, 1441-1444.

\bibitem{Kassel}
C. Kassel, \emph{Quantum groups},  Graduate Texts in Math. \textbf{155}(1995),
Springer-Verlag New York.


\bibitem{Leclerc}
B. Leclerc, \emph{Dual canonical bases, quantum shuffles and $q$-characters}, Math. Zeit. \textbf{246}(2004), 691-732.


\bibitem{Lusztig90}
G. Lusztig, \emph{Canonical bases arising from quantized enveloping algebras}, J. Amer. Math. Soc. \textbf{3}(1990), no. 2, 447-498.
\bibitem{Lusztig93}
G. Lusztig, \emph{Introduction to quantum groups}, Progress in Mathematics \textbf{110}(1993).
\bibitem{Lusztig94}
G. Lusztig, \emph{Problems on canonical bases}, Proc. Symp. Pure Math. \textbf{56}(1994), 2, 169-176.

\bibitem{Reineke}
M. Reineke, \emph{Feigin's map and monomial bases for quantized enveloping algebras}, Math. Zeit. \textbf{237}(2001), 639-667.

\bibitem{Reineke10}
M. Reineke, \emph{Poisson automorphisms and quiver moduli}, J. Inst. Math. Jussieu \textbf{9} (2010), no. 3, 653Ð667.

\bibitem{Ringel}
C. M. Ringel, \emph{Hall algebras and quantum groups}, Invent. Math. \textbf{101}(1990), 583-592.
\bibitem{Rosso}
M. Rosso, \emph{Quantum groups and quantum shuffles}, Invent. math. \textbf{133}(1998), 399-416.

\bibitem{Rupel}
D. Rupel, \emph{The Feigin tetrahedron}, SIGMA. \textbf{11}(2015), 024, 30 pages.
\end{thebibliography}
\end{document}